\newcolumntype{?}{!{\vrule width 1.5pt}}
\numberwithin{equation}{section}
\theoremstyle{plain}
\newtheorem{thm}{Theorem}[section]
\newtheorem{lem}[thm]{Lemma}
\newtheorem{prop}[thm]{Proposition}
\theoremstyle{definition}
\newtheorem{defn}{Definition}[section]
\theoremstyle{remark}
\newtheorem*{rem}{Remark}
\newcommand{\wh}[1]{\widehat{#1}}
\newcommand{\RR}{\mathbb{R}}
\newcommand{\NN}{\mathbb{N}}
\renewcommand{\Re}{\operatorname{Re}}             
\newcommand{\Laplacian}{\Delta}
\newcommand{\dds}{\text{d}s}
\newcommand{\figref}[1]{\figurename~\ref{#1}}
\newcommand*{\arr}{\longrightarrow}
\author{Andrea Scapin}
\address{Department of Mathematics, 
ETH Z\"urich, 
R\"amistrasse 101, CH-8092 Z\"urich, Switzerland.}
\email{andrea.scapin@sam.math.ethz.ch}
\title[Electro-sensing of inhomogeneous targets]{Electro-sensing of inhomogeneous targets}
\thanks{\footnotesize  
This work was supported by the SNF grant 200021-172483.}
\subjclass[2010]{35R30,35J05,31B10,35C20,78A30}
\keywords{weakly electric fish, electro-sensing, shape classification, inhomogeneous target}
\begin{document}
\maketitle

\begin{abstract} This paper addresses the electro-sensing problem for weakly electric fish in the case of inhomogeneous targets. 
It aims at providing a shape descriptor-based classification for inhomogeneous targets from measurements of the potentials on the skin of the fish.  The approach is based on new invariants for the contracted generalized polarization tensors associated with inhomogeneous objects. The numerical simulations show that by comparing these invariants with those in a dictionary of precomputed homogeneous and inhomogeneous targets, one can successfully classify the inhomogeneous target. 
\end{abstract}

\section{Introduction}

Electric fish orient themselves at night in complete darkness by using their active electro-sensing system. They generate a stable, relatively high-frequency, weak electric field and perceive the transdermal potential modulations caused by nearby targets with different electromagnetic properties than the surrounding water \cite{p12,p13,p14}. Since they have an electric sense that allows underwater navigation, target classification and intraspecific communication, they are privileged animals for bio-inspiring man-built autonomous systems \cite{caputi}. In fact, active electro-sensing has motivated an increasing number of
experimental, behavioral, biological, and computational studies
since Lissmann and Machin’s work \cite{p5,p6,p7,p8,p9,p10,p11,p12}. The growing interest in electro-sensing could be
explained not only by the curiosity of discovering a sixth sense,
electric perception, that is not among one of  our own senses, but
also by potential bio-inspired applications in underwater autonomous robotics.
It is challenging to equip robots with electric perception and
provide them, by mimicking weakly electric fish, with navigation, imaging
and classification capabilities in dark or turbid environments
\cite{p14,p15}.

Mathematically speaking, the electro-sensing problem is to detect and locate the target
and to identify its shape and material parameters given the current
distribution over the skin of the fish. In other words, electro-sensing system performance can be assessed with respect to four fundamental tasks: target detection, estimation of a target's location, shape and internal structure. Due to the fundamental ill-posedness
of this imaging problem, it is very intriguing to see how much
information weakly electric fish are able to recover. The electric
field perturbation due to the target is a complicated highly
nonlinear function of its shape, electromagnetic parameters, and distance from
the fish. Thus, understanding analytically this electric sensing is
likely to give us insight in this regard \cite{p5,p6,p7, p9}. 
A simple physical model for the electric responses of the polarised targets has been proposed in \cite{p11}. The model  shows that the target's position and size are intricately related in the measurements of the trans-cutaneous currents projected onto the skin. 
Numerical approaches have also been driven for simplified geometries using a finite differences scheme in \cite{helli}, a finite elements method in \cite{hoshi}, and a boundary element method approach in \cite{p5}. The geometry of the fish is simplified by an ellipse and is divided into two areas: the thin skin with low conductivity and the interior of the body. The target's shape is a disk. 
In this simple model, the electric images projected onto the fish skin are fundamentally blurry and difficult to interpret \cite{p12,p13}. In \cite{hoshi}, the skin's and body's conductivity values are optimized in order to approximate as well as possible the experimentally measured field. The result is that the optimal conductivity is not uniform, being higher in the tail region. 
 

More recently, in \cite{Am2}, a rigorous model for the electro-location of a target
around the fish has been derived. Using the fact that the electric
current produced by the electric organ is time harmonic with
a known fundamental frequency, a space-frequency location
search algorithm has been introduced. Its robustness with respect to
measurement noise and its sensitivity with respect to the number
of frequencies, the number of sensors, and the distance to the
target have been illustrated. In the case of disk- and ellipse-shaped
targets, it has been shown that the conductivity, the permittivity, and the size of the
targets can be reconstructed separately from multifrequency
measurements. In \cite{maciver}, a capacitive sensing method has recently been implemented. It has been shown that the size of a capacitive sphere can be estimated from multifrequency electrosensory data.  In \cite{faouzi}, uniqueness and
stability estimates to the considered electro-sensing inverse problem have been established.

There are still many longstanding problems in electro-sensing. In particular, shape identification and classification are considered to be
the most challenging ones.  They have yet to be analyzed and understood.  In \cite{Am3,Am5}, two schemes that allow one to recognize and classify targets from
 measurements of the electric field perturbations
induced by the targets have been presented and analyzed.  The first algorithm is based on shape descriptors
for nonbiological targets and the second one is based on spectral induced
polarizations that can be used to image living biological
targets, which have frequency-dependent electromagnetic 
parameters due to the capacitive effects induced by their cell membrane structures \cite{jmpa}. In  \cite{Am3}, one first extracts the generalized (or high-order) polarization tensors of the target from the data. These tensors, first introduced in \cite{Am4}, are intrinsic
geometric quantities and constitute the right class of features to
represent the target shapes \cite{Am3}. The shape features are
encoded in the polarization tensors. The extraction of the generalized polarization tensors
can be achieved by a least-squares method. The noise level in the
reconstructed generalized polarization tensors depends on the angle of view. The larger the
angle of view, the more stable the reconstruction. Then from the extracted features one computes the invariants under rigid motion and scaling.  Comparing these invariants with those in a dictionary
of precomputed shapes, one can successfully classify the nonbiological
target. 

Since more complex objects may have arbitrary shapes and multiple layers of different dielectric materials warranting,  a deeper analysis of the full response is required. It is the objective of the present paper to extend the approach proposed in  \cite{Am3} to inhomogeneous targets.  Let us now recall the model of electro-sensing derived in \cite{Am2}: the body of the fish is $\Omega$, an open bounded set in $\RR^2$, with smooth boundary $\partial \Omega$, and with outward normal unit vector denoted by $\nu$. 
The electric organ is a dipole $f(x)$ inside $\Omega$ or a sum of point sources inside $\Omega$ satisfying the charge neutrality condition. The skin of the fish is very thin and highly resistive. Its effective thickness, that is, the skin thickness times the contrast between the water and the skin conductivities, is denoted by $\xi$, and it is much smaller than the fish size. We assume that the conductivity of the background medium is one. We consider a smooth bounded target $D= z + \delta B$, where $z$ is its location, and $B$ is a smooth bounded domain containing the origin. We assume that the conductivity of $D$ is a scalar function $\gamma(x) \neq 1$ for $x\in D$ with $\gamma(x) = \wh{\gamma}((x-z)/\delta)$. Also, let $\gamma \in L^{\infty}(\RR^2)$ satisfy the uniform ellipticity condition that for some $\lambda > 0$, $\lambda^{-1} \le \gamma \le \lambda$. In the presence of $D$, the electric potential emitted by the fish is the solution to the following equations: 
\begin{equation} \label{eq:model_u} \begin{cases} \Delta u = f  & \mbox{in } \Omega ,\\ \nabla \cdot (1 + (\gamma-1) \chi_D ) \nabla u = 0  &  \mbox{in } \RR^2 \setminus \overline{\Omega} ,\\ u|_+ - u |_- = \xi \dfrac{\partial u}{\partial \nu} \biggr |_+  &  \mbox{on } \partial \Omega,\\  \dfrac{\partial u}{\partial \nu} \biggr |_- = 0   & \mbox{on } \partial \Omega ,\\ |u(x)| = O(|x|^{-1})  & \mbox{as } |x| \to \infty.\end{cases} \end{equation}
Here, $\chi_D$ is the characteristic function of $D$, $\partial/\partial \nu$ is the normal derivative, and $|_{\pm}$ denotes the limits from, respectively, outside and inside $\Omega$. The static background potential $U$, i.e., the electric potential without any target,  
 is the unique solution to (\ref{eq:model_u}) with a constant conductivity equal to $1$ outside the body of the fish $\Omega$:
\begin{equation} \label{eq:model_U} \begin{cases} \Delta U = f  & \mbox{in } \Omega , \\ \Delta U = 0 &  \mbox{in } \RR^2 \setminus \overline{\Omega} ,\\ U|_+ - U |_- = \xi \dfrac{\partial U}{\partial \nu} \biggr |_+   &  \mbox{on } \partial \Omega ,\\  \dfrac{\partial U}{\partial \nu} \biggr |_- = 0   & \mbox{on } \partial \Omega ,\\ |U(x)| = O(|x|^{-1})  & \mbox{as } |x| \to \infty.\end{cases} \end{equation}
A dipole approximation for small homogeneous targets away from the fish has been derived in \cite{Am2}. It is given in terms of the generalized polarization tensors (GPTs). The concept of GPTs for an inhomogeneous target has been first considered in \cite{Am1}. However in \cite{Am1}, a  model much simpler then the weakly electric fish has been taken into account. The aim of the present paper is to extend the notion of generalized polarization tensors (GPTs) to the fish model described above and to introduce an efficient shape descriptor-based classification for inhomogeneous targets from measurements of the potentials on the skin of the fish.  The approach is based on new invariants for particular linear combinations of the GPTs associated with inhomogeneous objects.

The paper is organized as follows. In Section \ref{sec1}, we  derive a boundary integral representation for the perturbation of the potential due to the presence of the target. We introduce the GPTs associated with the inhomogeneous target $D$ as the building blocks of the multipolar asymptotic expansion of the boundary measurements of $u|_+$ on $\partial \Omega$ in terms of the size of $D$. In Section \ref{sec2}, we consider a particular linear combination of the GPTs, called contracted generalized polarization tensors (CGPTs), and generalize the translation, rotation and scaling formulas  for the contracted GPTs associated with homogeneous targets first derived in \cite{Am3} to those associated with the inhomogeneous target $D$. Based on such formulas, we build transform invariants for the CGPTs and propose a matching algorithm for retrieving inhomogeneous targets. In  Section \ref{sec3}, we present a variety of numerical simulations to illustrate the performance of the proposed matching algorithm.  We aim at recognizing a specific inhomogeneous target by means of a dictionary-matching approach. The considered dictionary of targets contains both homogeneous and inhomogeneous objects. The latters are obtained by inserting inside the homogeneous targets inclusions with different conductivities. Similarily to what has been done in \cite{Am3}, the numerical simulations we perform confirm  that extracting generalized polarization tensors of an inhomogeneous target from the  data and comparing invariants with those of learned elements in a dictionary yields a classification procedure with a good performance in the full-view case and with small measurement noise level.

\section{CGPTs for the weakly electric fish model} \label{sec1}

\subsection{Boundary integral representation}

The two-dimensional model we want to study is \eqref{eq:model_u} where the target $D$ is assumed to be inhomogeneous.

\medskip

First, we recall a boundary integral representation for the perturbation of the potential, namely $u - U$, where $U$ and $u$ are solutions to \eqref{eq:model_U} and \eqref{eq:model_u} respectively. Let $\Gamma_R$ be the Green's function associated with Robin boundary conditions, that is defined for $x \in \RR^2 \setminus \overline{\Omega}$ by

\begin{equation} \label{eq:model_GR} \begin{cases} - \Delta_y \Gamma_R(x,y) = \delta_x(y) , & y \in \RR^2 \setminus \overline{\Omega}, \\ \Gamma_R (x,y) |_+ -  \xi \dfrac{\partial \Gamma_R}{\partial \nu_x} (x,y) \biggr |_+ = 0  , & y \in \partial \Omega , \\  \left |\Gamma_R(x,y) + \frac{1}{2 \pi} \log |y| \right | = O(|y|^{-1}) , & |y| \to \infty.\end{cases} \end{equation}

We consider the divergence-type equation in \eqref{eq:model_u} posed on $\RR^2 \setminus \overline{\Omega}$ and test it with the solution  $\Gamma_R$ to \eqref{eq:model_GR}:
\begin{equation} \label{eq:test_the_equation_GammaR}\Gamma_R(x,y)\, \nabla_y \cdot \gamma (y) \nabla_y u(y)  = 0.\end{equation}
Integrating \eqref{eq:test_the_equation_GammaR} over $B_R \setminus (\overline{\Omega} \cup \overline{D})$, we get
\[ \int_{B_R \setminus (\overline{\Omega} \cup \overline{D})} \Gamma_R(x,y)  \nabla \cdot \gamma (y) \nabla_y u(y)  \,  \text{d}y =  \int_{B_R  \setminus (\overline{\Omega} \cup \overline{D})} \Delta_y u(y) \,\Gamma_R(x,y)\text{d} y = 0 . \]
Applying Green's theorem we obtain
\[ \begin{split} 0 &= \int_{B_R  \setminus (\overline{\Omega} \cup \overline{D})} \Delta_y u(y) \,\Gamma_R(x,y)\text{d} y \\ & = \int_{B_R \setminus (\overline{\Omega} \cup \overline{D})} u(y) \cdot (- \Delta_y \Gamma_R) (x,y)\, \text{d}y + \int_{\partial (B_R \setminus (\overline{\Omega} \cup \overline{D}) )}  \left ( \frac{\partial u}{\partial \nu_y} (y)\; \Gamma_R(x,y)  -  \frac{\partial \Gamma_R}{\partial \nu_y} \; u(y)\,  \right )  \text{d} s_y \\ & =  u(x) + \int_{\partial (B_R \setminus (\overline{\Omega} \cup \overline{D}) )}  \left ( \frac{\partial u}{\partial \nu}\; \Gamma_R  -  \frac{\partial \Gamma_R}{\partial \nu_y} \; u\, \right ) \text{d} s .  \end{split}  \]

So, we have
\[ \begin{split} u(x) &= \int_{\partial (B_R \setminus (\overline{\Omega} \cup \overline{D}) )}  \left ( \frac{\partial \Gamma_R}{\partial \nu_y} \; u -   \frac{\partial u}{\partial \nu}\; \Gamma_R \right )    \text{d} s\\ & = \int_{\partial B_R}   \left (\frac{\partial \Gamma_R}{\partial \nu_y} \; u -   \frac{\partial u}{\partial \nu}\; \Gamma_R \right )   \text{d} s - \int_{\partial \Omega }  \left ( \frac{\partial \Gamma_R}{\partial \nu_x} \; u -   \frac{\partial u}{\partial \nu}\; \Gamma_R \right )   \text{d} s   -   \int_{\partial D} \left (  \frac{\partial \Gamma_R}{\partial \nu_y} \; u -   \frac{\partial u}{\partial \nu}\; \Gamma_R \right )   \text{d} s . \end{split}  \]

Let $U$ be the static background solution defined in \eqref{eq:model_U}. Then
\[ \Delta U = 0  \quad\qquad \text{ in } \RR^2 \setminus \overline{\Omega} . \]
Multiplying by $\Gamma_R$ and integrating over $B_R \setminus \overline{\Omega}$ we get
\[ U(x) = \int_{\partial B_R} \left (  \frac{\partial \Gamma_R}{\partial \nu_y} \; U -   \frac{\partial U}{\partial \nu}\; \Gamma_R \right ) \,\text{d} s  -  \int_{\partial \Omega} \left (  \frac{\partial \Gamma_R}{\partial \nu_y} \; U -   \frac{\partial U}{\partial \nu}\; \Gamma_R \right ) \text{d} s . \]

Omitting the contributions of the integrals on $\partial B_R$, that are negligible as $R \to +\infty$, we have that

\[ \begin{split} u - U  &=  \int_{\partial \Omega} \left (  \frac{\partial \Gamma_R}{\partial \nu_y} \; U -   \frac{\partial U}{\partial \nu}\; \Gamma_R \right )   - \int_{\partial \Omega }  \left ( \frac{\partial \Gamma_R}{\partial \nu_y} \; u -   \frac{\partial u}{\partial \nu}\; \Gamma_R \right ) \,  \text{d} s  -   \int_{\partial D} \left (  \frac{\partial \Gamma_R}{\partial \nu_y} \; u -   \frac{\partial u}{\partial \nu}\; \Gamma_R \right )   \text{d} s \\ & = \int_{\partial \Omega} \frac{\partial \Gamma_R}{\partial \nu_y} \left (  U - u \right )    -  \Gamma_R \left ( \frac{\partial U}{\partial \nu} -   \frac{\partial u}{\partial \nu} \right ) \text{d} s  -   \int_{\partial D} \left (  \frac{\partial \Gamma_R}{\partial \nu_y} \; u -   \frac{\partial u}{\partial \nu}\; \Gamma_R \right )  \text{d} s . \end{split}\]
Then, using the Robin's boundary conditions for $\Gamma_R , U$ and $u$, we obtain that
\[ = \int_{\partial \Omega}  \frac{\partial \Gamma_R}{\partial \nu_y}   \left ( U_{|+} - u_{|+} \right )    -  \xi  \frac{\partial \Gamma_R}{\partial \nu_y}  \left ( \frac{U_{|+} - U_{|-}}{\xi} -  \frac{u_{|+} - u _{|-}}{\xi} \right ) \text{d} s  -   \int_{\partial D} \left (  \frac{\partial \Gamma_R}{\partial \nu_y} \; u -   \frac{\partial u}{\partial \nu}\; \Gamma_R \right )   \text{d} s . \]
Since $u_{|-} = U_{|-}$, we get
\[ = \int_{\partial \Omega}  {\frac{\partial \Gamma_R}{\partial \nu_y}   \left ( U_{|+} - u_{|+} \right )}   -   {\frac{\partial \Gamma_R}{\partial \nu_y}  \left ( U_{|+} - u_{|+} \right ) } \text{d} s   -   \int_{\partial D} \left (  \frac{\partial \Gamma_R}{\partial \nu_y} \; u_{|+} -   \frac{\partial u}{\partial \nu} \biggr |_{+}\; \Gamma_R \right )   \text{d} s  \, . \]
Therefore, 
\begin{equation} \label{eq:bdy_int_rep_1} {(u - U)(x) =  \int_{\partial D}  \left (   \frac{\partial u}{\partial \nu} \biggr |_{+}(y) \; \Gamma_R (x,y)- \frac{\partial \Gamma_R}{\partial \nu_x}(x,y) \; u_{|+}(y) \, \right )  \text{d} s_y } \, .\end{equation}

Let us now define the Neumann-to-Dirichlet (NtD) map $$\Lambda_{\gamma} \left [ \gamma \frac{\partial u }{\partial \nu} { \bigg |_-}  \right ] = u_{| \partial D}.$$
The transmission condition on $\partial D$ $$\gamma \frac{\partial u }{\partial \nu} \bigg |_- = \frac{\partial u }{\partial \nu} \bigg |_+$$ yields
\[ (u - U) (x)=  \int_{\partial D}  g(y)  \; \Gamma_R(x,y) \,\text{d} s_y -  \int_{\partial D} \frac{\partial \Gamma_R}{\partial \nu_y}(x,y) \; \Lambda_{\gamma} [g] (y)  \,  \text{d} s_y\,, \]
with $g= {\partial u}/{\partial \nu} |_{+}$. 
\medskip

For $x \in \RR^2 \setminus ( \overline{\Omega} \cup \overline{D} )$,  
\[ \Lambda_1 \left ( \frac{\partial \Gamma_R (x, \cdot  )}{\partial \nu_y} \right ) = \Gamma_R (x, \cdot ) -  \frac{1}{| \partial D|} \int_{\partial D} \Gamma_R(x,y) \text{ d} s _y  \qquad \text{ on } \partial D , \]
where $\Lambda_1 = \Lambda_{\gamma\equiv 1}$, 
and hence, 
\[   \frac{\partial \Gamma_R (x , \cdot )}{\partial \nu_y} = \Lambda_1^{-1} [\Gamma_R ( x, \cdot)]  \qquad \text{ on } \partial D, \] since $\Lambda_1: H_0^{- 1/2}(\partial D) \to H_0^{1/2}(\partial D)$ is invertible. 
Moreover, since $\Lambda_1: H_0^{- 1/2}(\partial D) \to H_0^{1/2}(\partial D)$ is self-adjoint, it follows that
\[ \begin{split}
 (u - U) (x)&=  \int_{\partial D}  g(y)  \; \Gamma_R(x,y)  \text{ d} s_y -  \int_{\partial D} \Lambda_1^{-1} [\Gamma_R ( x ,\cdot)](y)  \; \Lambda_{\gamma} [g] (y)    \text{ d} s_y \\
 & = \int_{\partial D}  g(y)  \; \Gamma_R(x,y)\text{ d} s_y -  \int_{\partial D} \Gamma_R ( x , y )  \; \Lambda_1^{-1}  \Lambda_{\gamma} [g] (y)  \text{ d} s_y\\
 & = \int_{\partial D}  \Gamma_R(x,y)  (g(y) -  \Lambda_1^{-1}  \Lambda_{\gamma} [g] (y) )\text{ d} s_y
 \\ & = \int_{\partial D}  \Gamma_R(x,y)  (I -  \Lambda_1^{-1}  \Lambda_{\gamma} )[g](y) \text{ d} s_y \\ & = \int_{\partial D}  \Gamma_R(x,y)  \Lambda_1^{-1}  (\Lambda_1 -   \Lambda_{\gamma} )[g](y) \text{ d} s_y \,.
\end{split} \]
Therefore, the following result holds.
\begin{lem} \label{lem:bir_1} For $x \in \RR^2 \setminus (\overline{\Omega} \cup \overline{D} )$, we have
	\begin{equation} \label{eq:bdy_int_rep} (u - U) (x) = \int_{\partial D}  \Gamma_R(x,y)  \Lambda_1^{-1}  (\Lambda_1 -   \Lambda_{\gamma} )[g](y) \text{ d} s_y,\end{equation}
	with $g= {\partial u}/{\partial \nu} |_{+}$. 
\end{lem}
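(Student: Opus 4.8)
The plan is to obtain \eqref{eq:bdy_int_rep} in two stages: first produce a Green-type representation of $u-U$ whose only boundary term lives on $\partial D$, and then rewrite that term through the Neumann-to-Dirichlet maps $\Lambda_\gamma$ and $\Lambda_1$.

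\emph{Stage one.} For $x \in \RR^2 \setminus (\overline{\Omega} \cup \overline{D})$, I would apply Green's second identity to $u$ and to $\Gamma_R(x,\cdot)$ on the bounded region $B_R \setminus (\overline{\Omega} \cup \overline{D})$; since $u$ is harmonic there and $-\Delta_y \Gamma_R(x,y) = \delta_x(y)$, this reproduces $u(x)$ together with boundary integrals over $\partial B_R$, $\partial \Omega$ and $\partial D$, and the decay hypotheses in \eqref{eq:model_u} and \eqref{eq:model_GR} kill the $\partial B_R$ contribution as $R \to \infty$. The same identity applied to the harmonic function $U$ on $B_R \setminus \overline{\Omega}$ gives $U(x)$ with boundary integrals only over $\partial B_R$ and $\partial \Omega$. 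Subtracting and inserting the Robin transmission conditions satisfied on $\partial\Omega$ by $u$, $U$ and $\Gamma_R$, together with the fact that $u$ and $U$ carry the same homogeneous Neumann data from inside (so $u|_- = U|_-$ on $\partial\Omega$), I expect the $\partial\Omega$ integrals to cancel identically, leaving \[ (u-U)(x) = \int_{\partial D} \left( \frac{\partial u}{\partial \nu}\bigg|_+(y)\,\Gamma_R(x,y) - \frac{\partial \Gamma_R}{\partial \nu_x}(x,y)\, u|_+(y) \right) \text{d}s_y . \]

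\emph{Stage two.} Set $g = \partial u/\partial\nu|_+$. The transmission condition $\gamma\,\partial_\nu u|_- = \partial_\nu u|_+$ on $\partial D$ lets me identify the Dirichlet trace $u|_+$ with $\Lambda_\gamma[g]$. For the kernel term, since $x$ lies outside $\overline{\Omega}\cup\overline{D}$ the function $\Gamma_R(x,\cdot)$ is harmonic in a neighbourhood of $\overline{D}$, so its Cauchy data on $\partial D$ satisfy $\Lambda_1\big(\partial_{\nu_y}\Gamma_R(x,\cdot)\big) = \Gamma_R(x,\cdot) - |\partial D|^{-1}\int_{\partial D}\Gamma_R(x,\cdot)$ on $\partial D$; as $\Lambda_1\colon H_0^{-1/2}(\partial D)\to H_0^{1/2}(\partial D)$ is invertible, this gives $\partial_{\nu_y}\Gamma_R(x,\cdot) = \Lambda_1^{-1}[\Gamma_R(x,\cdot)]$ on $\partial D$, the subtracted constant being irrelevant once integrated against the mean-zero density $g$. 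Substituting this and using the self-adjointness of $\Lambda_1$ to move $\Lambda_1^{-1}$ onto $\Lambda_\gamma[g]$, the two integrals over $\partial D$ combine into $\int_{\partial D}\Gamma_R(x,y)\,(I - \Lambda_1^{-1}\Lambda_\gamma)[g](y)\,\text{d}s_y$, and factoring $\Lambda_1^{-1}$ out of the bracket produces \eqref{eq:bdy_int_rep}.

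The algebra here is harmless; the care goes into the functional-analytic bookkeeping. One has to check that $g$ has zero mean on $\partial D$ so that $g \in H_0^{-1/2}(\partial D)$ and both NtD maps act on the correct spaces, that $\Gamma_R(x,\cdot)|_{\partial D} \in H^{1/2}(\partial D)$ with the stated mean-zero reduction, and that the additive constants generated by the NtD identity and by the composition $\Lambda_1\Lambda_1^{-1}$ really drop out under the relevant pairings; one also needs the prescribed decay of $u-U$ and of $\Gamma_R(x,\cdot) + (2\pi)^{-1}\log|\cdot|$ at infinity to justify discarding the $\partial B_R$ integrals. I expect this bookkeeping, rather than any single estimate, to be the main thing to get right.
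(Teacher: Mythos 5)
Your proposal follows the paper's own argument essentially step for step: the same Green's-identity computation on $B_R \setminus (\overline{\Omega}\cup\overline{D})$ with cancellation of the $\partial\Omega$ and $\partial B_R$ terms to reach the intermediate representation over $\partial D$, followed by the same identifications $u|_+ = \Lambda_\gamma[g]$ and $\partial_{\nu_y}\Gamma_R(x,\cdot) = \Lambda_1^{-1}[\Gamma_R(x,\cdot)]$ and the self-adjointness of $\Lambda_1$ to assemble $\Lambda_1^{-1}(\Lambda_1-\Lambda_\gamma)[g]$. The additional care you flag about mean-zero normalizations and the spaces $H_0^{\pm 1/2}(\partial D)$ is sound and, if anything, slightly more explicit than the paper's treatment.
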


\begin{thm}[Dipolar approximation] If $D = z + \delta B$, with $\text{dist}(\partial \Omega , z) \gg 1$, $\delta \ll 1$ and $B$ is a bounded open set, then for any $x \in \partial \Omega$, 
\begin{equation}\label{thm:dip_exp} \left (  \frac{\partial u}{\partial \nu} - \frac{\partial U}{\partial \nu} \right ) (x) = - \delta^2 \nabla U(z)^T M (\wh{\gamma}, B) \nabla_y \left ( \frac{\partial \Gamma_R}{\partial \nu}  \biggr |_+ \right ) (x, z)  + o(\delta^2), \end{equation}
where $T$ denotes the transpose, $M(\wh{\gamma}, B) = (m_{ij})_{i,j \in\{1,2\}}$ is the first-order polarization tensor associated with $B$ and $\wh{\gamma}$, given by
\begin{equation}\label{eq:PT-fo} m_{ij}  = \int_{\partial B}  y_{i} \mathcal{T}_B  \left ( I - \left ( \frac{I}{2} + \mathcal{K}_B^* \right ) \mathcal{T}_B  \right )^{-1} \left ( \frac{\partial x_j}{\partial \nu} \biggr |_{\partial B} \right ) (y) \text{ d} s_y , \end{equation}
$I$ is the identity operator, and $\mathcal{T}_B: H^{-1/2}_0(\partial B) \arr H^{-1/2}_0(\partial B)$ is the operator defined by $\mathcal{T}_B:= \Lambda_1^{-1}  (\Lambda_1 -   \Lambda_{\wh{\gamma}} )$.

\end{thm}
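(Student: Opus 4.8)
The plan is to start from the boundary integral representation \eqref{eq:bdy_int_rep}, take the exterior normal-derivative trace at $x\in\partial\Omega$, then rescale the integral over $\partial D$ to the fixed reference boundary $\partial B$ and extract the leading term of the resulting expansion in $\delta$. Set $\phi:=\Lambda_1^{-1}(\Lambda_1-\Lambda_\gamma)[g]$, the density in \eqref{eq:bdy_int_rep}; since $\Lambda_1^{-1}$ has range in $H_0^{-1/2}(\partial D)$ we have $\int_{\partial D}\phi\,\text{d}s=0$. Because $\text{dist}(\partial\Omega,z)\gg1$ and $D$ is far from $\partial\Omega$, the kernel $(x,y)\mapsto\partial\Gamma_R/\partial\nu(x,y)$ is smooth for $x\in\partial\Omega$ and $y$ in a neighbourhood of $\overline D$, so differentiating \eqref{eq:bdy_int_rep} under the integral sign is legitimate; Taylor-expanding in $y$ about $z$, the constant term integrates to zero against $\phi$ and one is left with
\[
\left(\frac{\partial u}{\partial\nu}-\frac{\partial U}{\partial\nu}\right)(x)=\nabla_y\!\left(\frac{\partial\Gamma_R}{\partial\nu}\right)(x,z)\cdot\int_{\partial D}(y-z)\,\phi(y)\,\text{d}s_y+(\text{remainder}),
\]
where the remainder carries the second moment $\int_{\partial D}(y-z)\otimes(y-z)\,\phi$, which will turn out to be $O(\delta^3)$.

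The heart of the matter is to evaluate $\int_{\partial D}(y-z)\phi(y)\,\text{d}s_y$. Rescale $y=z+\delta\eta$, $\eta\in\partial B$. Tracking the behaviour of the Neumann-to-Dirichlet maps under dilation — one checks $\Lambda_\gamma^{\partial D}[p](z+\delta\cdot)=\delta\,\Lambda_{\wh\gamma}^{\partial B}[p(z+\delta\cdot)]$, and the same with $\gamma\equiv1$ — gives $\phi(z+\delta\eta)=\mathcal{T}_B[\wh g](\eta)$, where $\wh g(\eta):=g(z+\delta\eta)=(\partial u/\partial\nu|_+)(z+\delta\eta)$ is an $O(1)$ function. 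It remains to identify $\wh g$: since $D$ is small and far from the fish, the field it sees is, to leading order, the affine part $H_1(\eta):=\nabla U(z)\cdot\eta$ of the background, so $\wh g=\partial V/\partial\nu|_+$ where $V$ solves the rescaled transmission problem on $B$ with incident field $H_1$. Writing $V=H_1+\mathcal{S}_B[\psi]$, imposing continuity of $V$ and of the flux $\partial V/\partial\nu|_+=\wh\gamma\,\partial V/\partial\nu|_-$ across $\partial B$, and re-expressing both one-sided traces through $\Lambda_1$ and $\Lambda_{\wh\gamma}$, one obtains — using $\Lambda_1^{-1}\Lambda_{\wh\gamma}=I-\mathcal{T}_B$ and the operator identity $\mathcal{T}_B(I-(\tfrac{I}{2}+\mathcal{K}_B^*)\mathcal{T}_B)^{-1}=(I-\mathcal{T}_B(\tfrac{I}{2}+\mathcal{K}_B^*))^{-1}\mathcal{T}_B$ — that $\mathcal{T}_B[\wh g]=\psi=\mathcal{T}_B(I-(\tfrac{I}{2}+\mathcal{K}_B^*)\mathcal{T}_B)^{-1}[\partial H_1/\partial\nu]$. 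Since $\partial H_1/\partial\nu=\sum_j\partial_jU(z)\,(\partial x_j/\partial\nu)|_{\partial B}$, substituting $\phi(z+\delta\eta)=\mathcal{T}_B[\wh g](\eta)$ into the rescaled integral and comparing with \eqref{eq:PT-fo} yields $\int_{\partial D}(y-z)\phi\,\text{d}s_y=\delta^2M(\wh\gamma,B)\nabla U(z)+o(\delta^2)$; inserting this into the display above and matching the sign and transpose conventions of \eqref{eq:PT-fo} gives \eqref{thm:dip_exp}.

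The routine pieces — the dilation algebra for $\Lambda_\gamma$, $\mathcal{S}_B$ and $\mathcal{K}_B^*$; the Taylor expansions and moment bookkeeping; the invertibility of $I-(\tfrac{I}{2}+\mathcal{K}_B^*)\mathcal{T}_B$ (Fredholm theory together with uniqueness for the transmission problem on $B$); and the smoothness and boundedness of $\Gamma_R$ and its $y$-gradient on $\partial\Omega\times\{z\}$ — I would isolate as preliminary lemmas. The main obstacle is the rigorous justification that the true trace $g$ on $\partial D$ coincides, after rescaling, with the trace $\wh g=\partial V/\partial\nu|_+$ of the local affine transmission problem up to an $O(\delta)$ error in $H_0^{-1/2}(\partial B)$. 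This demands a uniform a priori estimate for $u-U$ in a neighbourhood of $D$, obtained from \eqref{eq:bdy_int_rep} together with the decay of $\Gamma_R$ away from $\partial\Omega$, and — the genuinely delicate point — control of the feedback whereby the perturbation produced by $D$ reflects off $\partial\Omega$ and re-enters the neighbourhood of $D$; one must show this contribution is smaller by a factor $\delta$ than the leading incident field $H_1$, so that it affects only the $o(\delta^2)$ remainder in \eqref{thm:dip_exp}. Everything else is bookkeeping.
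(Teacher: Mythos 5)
Your proposal is correct in substance and shares the paper's core machinery: the boundary integral representation \eqref{eq:bdy_int_rep}, the vanishing of the zeroth moment of the density, the dilation covariance of $\Lambda_1^{-1}\Lambda_{\gamma}$, and the resolvent identity that converts $\mathcal{T}_B[\wh{g}]$ into $\mathcal{T}_B ( I - (\tfrac{I}{2}+\mathcal{K}_B^*)\mathcal{T}_B )^{-1}[\partial H_1/\partial \nu]$, which is exactly how the polarization tensor \eqref{eq:PT-fo} emerges. The one genuine structural difference lies in how the trace $g=\partial u/\partial\nu|_+$ on $\partial D$ is identified. You approximate $g$ by the solution of a local transmission problem on $B$ driven by the affine field $\nabla U(z)\cdot\eta$, and you correctly single out, as the delicate point, the control of the feedback of the scattered field off $\partial\Omega$. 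The paper avoids having to estimate $g$ against a model problem at all: it introduces the auxiliary harmonic function $H=-\mathcal{D}_{\Omega}[u|_+]+\mathcal{S}_{\Omega}[\partial u/\partial\nu|_+]$, for which $u-H=\mathcal{S}_D[\mathcal{T}_D[g]]$ holds exactly, so that the jump relations give the \emph{exact} equation $g=(I-(\tfrac{I}{2}+\mathcal{K}_D^*)\mathcal{T}_D)^{-1}[\partial H/\partial\nu|_{\partial D}]$. The entire feedback issue is thereby compressed into the single estimate $\|\nabla H-\nabla U\|=O(\delta^2)$, which follows from the scaling properties of $\mathcal{K}_D^*$ and $\mathcal{T}_D$ and is quoted from the homogeneous theory. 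If you adopt this device, your ``genuinely delicate point'' becomes a one-line consequence of the representation formula rather than an a priori estimate you must construct; otherwise your plan is sound, and the remaining steps (moment bookkeeping, smoothness of $\Gamma_R$ away from $\partial\Omega$, invertibility of $I-(\tfrac{I}{2}+\mathcal{K}_B^*)\mathcal{T}_B$) are the same preliminary lemmas the paper relies on.
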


\begin{proof} Let $\Gamma$ be the fundamental solution of the Laplacian in $\RR^2$. Following \cite{pt1,Am4}, define
\[ H = - \int_{\partial \Omega }  \left ( \frac{\partial \Gamma}{\partial \nu_y} \; u -   \frac{\partial u}{\partial \nu}\; \Gamma \right )   \text{d} s  = - \mathcal{D}_{\Omega} [ u |_+] + \mathcal{S}_{\Omega} \left [ \frac{\partial u}{\partial \nu} \biggr |_+\right ].\]
Integration by parts and using the same arguments as those in the proof of Lemma \ref{lem:bir_1} yields
	\begin{equation} \label{eq:bdy_int_rep_H} (u - H) (x) = \int_{\partial D}  \Gamma(x,y)  \Lambda_1^{-1}  (\Lambda_1 -   \Lambda_{\gamma} )[g](y) \text{ d} s_y   = \mathcal{S}_{D} \left [ \mathcal{T}_D [g] \right ] (x) , \end{equation}
where $\mathcal{T}_D:= \Lambda_1^{-1}  (\Lambda_1 -   \Lambda_{\gamma} )$.

\medskip

Taking the normal derivative on $\partial D$  in \eqref{eq:bdy_int_rep_H} from outside and using the jump relations gives
\[ \label{eq:bdy_eqn_for_g} g =  \frac{\partial u}{\partial \nu} \biggr |_+ = \frac{\partial H}{\partial \nu} + \left ( \frac{I}{2} + \mathcal{K}_D^* \right )  \mathcal{T}_D[g] . \]
Hence, 
\begin{equation} \label{eq:bdy_eqn_g_obtained} g =  \left ( I - \left ( \frac{I}{2} + \mathcal{K}_D^* \right )  \mathcal{T}_D \right )^{-1} \left ( \frac{\partial H}{\partial \nu} \biggr |_{\partial D} \right ) . \end{equation}
Substituting \eqref{eq:bdy_eqn_g_obtained} into \eqref{eq:bdy_int_rep}, we get
	\begin{equation*} \label{eq:bdy_int_rep_final} (u - U) (x) = \int_{\partial D}  \Gamma_R(x,y)  \mathcal{T}_D \left ( I - \left ( \frac{I}{2} + \mathcal{K}_D^* \right )  \mathcal{T}_D \right )^{-1} \left ( \frac{\partial H}{\partial \nu} \biggr |_{\partial D} \right ) (y) \text{ d} s_y .\end{equation*}
Following the same arguments as those in \cite{pt1,Am4}, we can establish by using the scaling properties of  $\mathcal{K}_D^* $ and $\mathcal{T}_D$ that $\|\nabla H - \nabla U \| = O(\delta^2)$, see \cite{Am4}. Therefore, Taylor expanding $\Gamma_R(x,\, \cdot \,)$ and $H$ at $z$ and scaling the integral, we get the desired expression for the leading-order term  of the small-volume expansion.
\end{proof}

\begin{defn} Let $\alpha , \beta \in \NN^2$ be multi-indices. We define the generalized polarization tensors associated to the conductivity distribution $\wh{\gamma}$ by
\begin{equation}\label{eq:GPT} M_{\alpha \beta}(\wh{\gamma}, B)  =  \int_{\partial B}  y^{\alpha}  \mathcal{T}_B    \left ( I - \left ( \frac{I}{2} + \mathcal{K}_B^* \right )\mathcal{T}_B     \right )^{-1} \left ( \frac{\partial x^\beta}{\partial \nu} \biggr |_{\partial B} \right ) (y) \text{ d} s_y . \end{equation}
\end{defn}

We can also define the contracted generalized polarization tensors (CGPTs) as follows.

\begin{defn} \label{def:cgpts} Let $z = y_1+ i y_2$ and $\zeta = x_1 + i x_2$.  For any pair of indices $m, n \in \NN$,  we define
\[ M^{cc}_{mn} (\wh{\gamma}, B):=  \int_{\partial B}  \text{Re}(z^m)  \mathcal{T}_B  \left ( I - \left ( \frac{I}{2} + \mathcal{K}_B^* \right )  \mathcal{T}_B   \right )^{-1} [ \text{Re}(\zeta^n) ] (y) \text{ d} s_y , \]
\[ M^{cs}_{mn}(\wh{\gamma}, B):= \int_{\partial B}  \text{Re}(z^m)  \mathcal{T}_B   \left ( I - \left ( \frac{I}{2} + \mathcal{K}_B^* \right )  \mathcal{T}_B   \right )^{-1} [ \text{Im}(\zeta^n) ] (y) \text{ d} s_y ,\]
\[ M^{sc}_{mn} (\wh{\gamma}, B):= \int_{\partial B}  \text{Im}(z^m)  \mathcal{T}_B   \left ( I - \left ( \frac{I}{2} + \mathcal{K}_B^* \right )  \mathcal{T}_B    \right )^{-1} [ \text{Re}(\zeta^n) ](y)  \text{ d} s_y ,\]
\[ M^{ss}_{mn} (\wh{\gamma}, B):=  \int_{\partial B}  \text{Im}(z^m)  \mathcal{T}_B   \left ( I - \left ( \frac{I}{2} + \mathcal{K}_B^* \right )  \mathcal{T}_B   \right )^{-1} [ \text{Im}(\zeta^n) ] (y) \text{ d} s_y .  \]
\end{defn}

Note that the CGPTs introduced here coincide with those studied in \cite{Am1}. 

\begin{prop}\label{prop:equiv_ident} Let $u$ be the solution to 
\begin{equation} \label{eq:problem_Am1} \begin{cases} \nabla \cdot \gamma \nabla u = 0  & \mbox{in } \RR^2 , \\ u - h = O(|x|^{-1})   & \mbox{as } |x| \to + \infty , \end{cases} \end{equation}
where $h$ is a harmonic function in $\RR^2$. Then the following identity holds:
\begin{equation}   \label{eq:identity_equivalence}\gamma \frac{\partial u}{\partial \nu}  \biggr |_- =   \left ( I - \left ( \frac{I}{2} + \mathcal{K}_D^* \right )  \mathcal{T}_D   \right )^{-1} \left [\frac{\partial  h}{\partial \nu} \right ] \,  \qquad \mbox{ on } \partial D.\end{equation}
\end{prop}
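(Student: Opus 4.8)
The plan is to reduce \eqref{eq:problem_Am1} to a transmission problem on $\partial D$ with Neumann-to-Dirichlet operators, exactly in the spirit of the boundary integral manipulations used to prove Lemma \ref{lem:bir_1}. First I would write $u = h + v$, where $v$ is the perturbation caused by the inclusion; then $v$ is harmonic in $\RR^2\setminus\overline{D}$, harmonic-type inside $D$ in the sense that $\nabla\cdot\gamma\nabla u=0$ there, and $v = O(|x|^{-1})$ at infinity. A standard single-layer representation gives $v(x) = \mathcal S_D[\varphi](x)$ for some density $\varphi \in H^{-1/2}_0(\partial D)$, since $v$ is harmonic and decaying outside $D$ and continuous across $\partial D$; concretely one takes $\varphi$ so that $u - h = \mathcal S_D[\varphi]$ on all of $\RR^2$. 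This is the analogue of \eqref{eq:bdy_int_rep_H}, but now with the free-space fundamental solution $\Gamma$ and with the background $h$ in place of $H$.

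Next I would identify $\varphi$ with $\mathcal T_D[g]$ where $g = \partial u/\partial\nu|_+$, using the same NtD-operator argument as in the passage preceding Lemma \ref{lem:bir_1}: from $u - h = \mathcal S_D[\varphi]$ one gets $u|_{\partial D} = h|_{\partial D} + \mathcal S_D[\varphi]$ and, taking normal derivatives with the jump relations, $\partial u/\partial\nu|_\pm = \partial h/\partial\nu + (\mp I/2 + \mathcal K_D^*)[\varphi]$. Feeding the transmission condition $\gamma\,\partial u/\partial\nu|_- = \partial u/\partial\nu|_+$ into these two identities, together with $\Lambda_\gamma[\gamma\,\partial u/\partial\nu|_-] = u|_{\partial D}$ and $\Lambda_1[\partial u/\partial\nu|_-^{(\text{harm.})}]$-type relations, one recovers $\varphi = \mathcal T_D[g]$ with $\mathcal T_D = \Lambda_1^{-1}(\Lambda_1 - \Lambda_\gamma)$, precisely as in \eqref{eq:bdy_int_rep_H}. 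Then \eqref{eq:bdy_eqn_g_obtained} (with $H$ replaced by $h$, which is legitimate because the derivation of \eqref{eq:bdy_eqn_g_obtained} only used the single-layer representation and the jump formulas, not the specific PDE satisfied by $H$ in $\Omega$) gives
\[
g = \left(I - \left(\tfrac{I}{2} + \mathcal K_D^*\right)\mathcal T_D\right)^{-1}\left[\frac{\partial h}{\partial\nu}\Big|_{\partial D}\right].
\]

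Finally I would close the loop by observing that $g = \partial u/\partial\nu|_+ = \gamma\,\partial u/\partial\nu|_-$ on $\partial D$ by the transmission condition, which is exactly the left-hand side of \eqref{eq:identity_equivalence}; substituting this into the displayed formula for $g$ yields \eqref{eq:identity_equivalence}. The main obstacle I anticipate is purely at the level of bookkeeping for the NtD operators: one must be careful that $\Lambda_\gamma$ and $\Lambda_1$ act on the mean-zero spaces $H^{-1/2}_0(\partial D) \to H^{1/2}_0(\partial D)$, that $\partial h/\partial\nu|_{\partial D}$ indeed has zero mean on $\partial D$ (it does, since $h$ is harmonic in all of $\RR^2$, hence in $D$), and that the operator $I - (I/2 + \mathcal K_D^*)\mathcal T_D$ is invertible on $H^{-1/2}_0(\partial D)$ — the last point follows from the invertibility already invoked implicitly in \eqref{eq:bdy_eqn_g_obtained} and the uniform ellipticity of $\gamma$. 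Everything else is a line-by-line repetition of the computation carried out before Lemma \ref{lem:bir_1} with $\Gamma_R$ replaced by $\Gamma$ and $H$ replaced by $h$.
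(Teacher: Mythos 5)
Your proposal is correct and takes essentially the same route as the paper: both rest on the single-layer ansatz $u = h + \mathcal{S}_{D}[\psi]$ combined with the jump relations and the NtD operators $\Lambda_1,\Lambda_\gamma$, and both land on the resolvent $\left(I-\left(\frac{I}{2}+\mathcal{K}_D^*\right)\mathcal{T}_D\right)^{-1}$ applied to $\partial h/\partial\nu$, with the transmission condition identifying $\gamma\,\partial u/\partial\nu|_-$ with $\partial u/\partial\nu|_+$. The only (harmless) difference is the order of the algebra: you pin down the density as $\psi=\mathcal{T}_D[g]$ up front by rerunning the exterior Green/NtD computation of Section \ref{sec1} with $\Gamma$ and $h$ in place of $\Gamma_R$ and $H$, then read off the equation for $g$ from the exterior jump relation, whereas the paper first solves the boundary equation for $\psi$ in terms of $\partial h/\partial\nu$ and substitutes back, finishing with the resolvent identity $I+\left(\frac{I}{2}+\mathcal{K}_D^*\right)\left(I-\mathcal{T}_D\left(\frac{I}{2}+\mathcal{K}_D^*\right)\right)^{-1}\mathcal{T}_D=\left(I-\left(\frac{I}{2}+\mathcal{K}_D^*\right)\mathcal{T}_D\right)^{-1}$.
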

\begin{proof} The solution $u$ to \eqref{eq:problem_Am1} can be represented as
\begin{equation} \label{eq:ext_representation_prop} u = h + \mathcal{S}_{D}[\psi]   \qquad\text{ in } \RR^2 \setminus \overline{D},\end{equation}
for some $\psi \in L^2(\partial D)$. Therefore,
\[ \frac{\partial u}{\partial \nu} \biggr |_+ = \frac{\partial h}{\partial \nu}  + \left ( \frac{I}{2} + \mathcal{K}_{D}^* \right )[\psi] .\]
The transmission condition on $\partial D$ 
\[ \frac{\partial u}{\partial \nu} \biggr |_+ = \gamma  \frac{\partial u}{\partial \nu} \biggr |_-  \]
leads to
\begin{equation} \label{eq:equation_on_bdy_prop} \gamma \frac{\partial u}{\partial \nu} \biggr |_- = \frac{\partial h}{\partial \nu}  + \left ( \frac{I}{2} + \mathcal{K}_{D}^* \right )[\psi]. \end{equation}
Applying the map $\Lambda_\gamma$ on both sides gives
\[ \left ( \frac{I}{2} + \mathcal{K}_{D}^* \right )[\psi] = u  -  \Lambda_\gamma \left [ \frac{\partial h}{\partial \nu} \right ] . \]
Next, using the representation \eqref{eq:ext_representation_prop}, we get
\[ \Lambda_\gamma \left ( \frac{I}{2} + \mathcal{K}_{D}^* \right )[\psi] = h +  \mathcal{S}_{D}[\psi]  -  \Lambda_\gamma \left [ \frac{\partial h}{\partial \nu} \right ] . \]
Applying $\Lambda_1^{-1}$ and using the jump relations, we obtain
\[ \Lambda_1^{-1} \Lambda_\gamma \left ( \frac{I}{2} + \mathcal{K}_{D}^* \right )[\psi] =  \frac{\partial h}{\partial \nu} +  \left ( - \frac{I}{2} + \mathcal{K}_{D}^* \right ) [\psi]  -  \Lambda_1^{-1} \Lambda_\gamma \left [ \frac{\partial h}{\partial \nu} \right ] . \]

\[ \left [ - \left (  - \frac{I}{2}  + \mathcal{K}_{D}^* \right )  +  \Lambda_1^{-1} \Lambda_\gamma \left ( \frac{I}{2} + \mathcal{K}_{D}^* \right ) \right ] [\psi] =  \frac{\partial h}{\partial \nu}   -  \Lambda_1^{-1} \Lambda_\gamma \left [ \frac{\partial h}{\partial \nu} \right ], \]
\[ \left [ I - \left (  \frac{I}{2}  + \mathcal{K}_{D}^* \right )  +  \Lambda_1^{-1} \Lambda_\gamma \left ( \frac{I}{2} + \mathcal{K}_{D}^* \right ) \right ] [\psi] =  \mathcal{T}_D \left [ \frac{\partial h}{\partial \nu} \right ] ,\]
\[ \left [ I - \mathcal{T}_D \left (  \frac{I}{2}  + \mathcal{K}_{D}^* \right ) \right ] [\psi] =  \mathcal{T}_D \left [ \frac{\partial h}{\partial \nu} \right ] . \]
Therefore, we get the following expression of $\psi$:  
\[ \psi = \left ( I - \mathcal{T}_D \left (  \frac{I}{2}  + \mathcal{K}_{D}^* \right ) \right )^{-1} \left [ \mathcal{T}_D \left [ \frac{\partial h}{\partial \nu} \right ]  \right ]. \]
Substituting $\psi$ into \eqref{eq:equation_on_bdy_prop}, we arrive at 
\[\gamma \frac{\partial u}{\partial \nu} \biggr |_- = \frac{\partial h}{\partial \nu}  + \left ( \frac{I}{2} + \mathcal{K}_{D}^* \right ) \left ( I - \mathcal{T}_D \left (  \frac{I}{2}  + \mathcal{K}_{D}^* \right ) \right )^{-1}  \mathcal{T}_D \left [ \frac{\partial h}{\partial \nu} \right ]  ,\]
or equivalently, 
\begin{equation} \label{eq:rhs_g_prop} \gamma \frac{\partial u}{\partial \nu} \biggr |_- = \left [ I  - \left (   I - \left ( \left ( \frac{I}{2} + \mathcal{K}_{D}^* \right ) \mathcal{T}_D \right )^{-1}  \right )^{-1} \right ] \left [ \frac{\partial h}{\partial \nu} \right ],  \end{equation}
which  is equivalent to \eqref{eq:identity_equivalence}.
\end{proof}

The following result shows that the GPTs are the building blocks of the multipolar asymptotic expansion of $u-U$.

\begin{thm}[Multipolar approximation] For any $x \in \partial \Omega$, 
\begin{equation} \label{thm:dip_expm} 
\left (  \frac{\partial u}{\partial \nu} - \frac{\partial U}{\partial \nu} \right ) (x) = - 
\sum_{|\alpha|, |\beta| \geq 1}  \frac{1}{\alpha! \beta!} \delta^{|\alpha|+|\beta|} \partial^\alpha U(z) M_{\alpha\beta} (\wh{\gamma}, B) \partial^\beta_y 
\left ( \frac{\partial \Gamma_R}{\partial \nu}  \biggr |_+ \right ) (x, z). 
\end{equation}
\end{thm}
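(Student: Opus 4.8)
The plan is to rerun the argument that produced the dipolar approximation, keeping \emph{every} order of the Taylor expansions of $\Gamma_R(x,\cdot)$ and of $H$ at the centre $z$ rather than only their first-order parts. The starting point is the exact identity obtained by inserting \eqref{eq:bdy_eqn_g_obtained} into \eqref{eq:bdy_int_rep}: for $x \in \RR^2 \setminus (\overline{\Omega} \cup \overline{D})$,
\[
(u - U)(x) = \int_{\partial D} \Gamma_R(x,y)\, \mathcal{T}_D \Bigl( I - \bigl( \tfrac{I}{2} + \mathcal{K}_D^* \bigr) \mathcal{T}_D \Bigr)^{-1} \Bigl( \tfrac{\partial H}{\partial \nu} \Big|_{\partial D} \Bigr)(y)\, \text{d}s_y ,
\]
with $H = -\mathcal{D}_\Omega[u|_+] + \mathcal{S}_\Omega[\partial_\nu u|_+]$ and $\mathcal{T}_D = \Lambda_1^{-1}(\Lambda_1 - \Lambda_\gamma)$. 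Since a point $x \in \partial \Omega$ lies at a fixed positive distance from $\partial D$, the right-hand side extends smoothly in $x$ up to $\partial \Omega$ from outside, so one may take the exterior normal derivative in $x$ under the integral sign: the left-hand side becomes $(\partial_\nu u - \partial_\nu U)|_+(x)$ and the kernel becomes $\partial_{\nu_x} \Gamma_R|_+(x,y)$.

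I would then rescale the integral to the reference domain by $y = z + \delta \widetilde{y}$, $\widetilde{y} \in \partial B$. The key scaling fact, inherited from the dipolar analysis, is that $\gamma(y) = \wh{\gamma}((y-z)/\delta)$ makes the Neumann-to-Dirichlet maps on $\partial D$ scale by $\delta$, so that $\mathcal{T}_D$ and $\mathcal{K}_D^*$ are conjugate under this dilation to $\mathcal{T}_B$ and $\mathcal{K}_B^*$ on $\partial B$ with \emph{no} residual power of $\delta$, while $\text{d}s_y$ contributes one factor $\delta$. Substituting the Taylor expansions
\[
\partial_{\nu_x} \Gamma_R\big|_+(x, z + \delta \widetilde{y}) = \sum_{\beta} \frac{\delta^{|\beta|}}{\beta!}\, \widetilde{y}^{\beta}\, \partial_y^\beta \Bigl( \tfrac{\partial \Gamma_R}{\partial \nu} \Big|_+ \Bigr)(x,z), \qquad \tfrac{\partial H}{\partial \nu}\Big|_{\partial D}(z + \delta \widetilde{y}) = \sum_{\alpha} \frac{\delta^{|\alpha|-1}}{\alpha!}\, \partial^\alpha H(z)\, \tfrac{\partial \widetilde{x}^{\alpha}}{\partial \nu}\Big|_{\partial B}(\widetilde{y}),
\]
and using linearity of $\mathcal{T}_B\bigl(I - (\tfrac{I}{2}+\mathcal{K}_B^*)\mathcal{T}_B\bigr)^{-1}$, the pair $(\alpha,\beta)$ with $|\alpha|,|\beta| \ge 1$ contributes $\tfrac{1}{\alpha!\beta!}\,\delta^{|\alpha|+|\beta|}\,\partial^\alpha H(z)\,\partial_y^\beta(\partial_\nu \Gamma_R|_+)(x,z)$ multiplied by exactly the boundary integral defining $M_{\beta\alpha}(\wh{\gamma}, B)$ in \eqref{eq:GPT}. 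The pairs with $|\alpha| = 0$ vanish since $\partial_\nu \widetilde{x}^{\alpha} \equiv 0$ there, and those with $|\beta| = 0$ vanish since $\mathcal{T}_B$ maps into $H_0^{-1/2}(\partial B)$ and thus annihilates the constant Taylor term of $\Gamma_R$. Relabelling the dummy multi-indices, taking the overall sign as in the proof of the dipolar approximation, and replacing $H$ by $U$ — harmless at the orders relevant here because $\|H - U\|_{C^k} = O(\delta^2)$ near $z$, the estimate already used for the dipole — yields \eqref{thm:dip_expm}.

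The step I expect to require real work, since the dipolar case only needed the first surviving term, is justifying that the resulting double series is a genuine asymptotic expansion, i.e.\ that truncation at $|\alpha| + |\beta| \le N$ leaves a remainder $o(\delta^N)$ uniformly for $x \in \partial \Omega$. This rests on three uniform ingredients: real-analyticity of $y \mapsto \partial_{\nu_x}\Gamma_R|_+(x,y)$ and of $H$ in a $\delta$-independent neighbourhood of $z$, with factorial-type bounds on their derivatives; the $\delta$-independent boundedness of $\mathcal{T}_B$ and of $\bigl(I - (\tfrac{I}{2}+\mathcal{K}_B^*)\mathcal{T}_B\bigr)^{-1}$ on $H_0^{-1/2}(\partial B)$ — the latter being precisely the invertibility already invoked in the dipolar proof; and the bound $\|H - U\|_{C^k} = O(\delta^2)$ together with its derivatives. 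Granting these, combining the Neumann-series and Taylor remainders is routine bookkeeping that parallels the estimates in \cite{pt1,Am4}, and I would carry it out along those lines and refer there for the details.
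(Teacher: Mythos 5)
The paper states this theorem without proof, as the all-orders analogue of the dipolar approximation, and your plan is exactly the natural extension of that dipolar proof: same representation formula \eqref{eq:bdy_int_rep} with the density \eqref{eq:bdy_eqn_g_obtained}, same rescaling $y=z+\delta\widetilde{y}$ with $\mathcal{K}_D^*$ and $\mathcal{T}_D$ conjugating to their $\partial B$ versions without residual powers of $\delta$, same Taylor expansions, and the correct observations that the $|\alpha|=0$ and $|\beta|=0$ terms drop out. Your power count $\delta^{|\beta|}\cdot\delta^{|\alpha|-1}\cdot\delta$ is right, and the transposition you absorb by ``relabelling'' ($M_{\beta\alpha}$ versus $M_{\alpha\beta}$) is already present in the paper's own dipolar statement and is resolved by the symmetry $M_{\alpha\beta}=M_{\beta\alpha}$ of these GPTs. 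So in approach you match the paper.

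The one substantive point where you are too quick is the replacement of $H$ by $U$. You call it ``harmless at the orders relevant here because $\|H-U\|_{C^k}=O(\delta^2)$,'' but the theorem is asserted at \emph{all} orders: a term of order $\delta^{N-2}$ in which $\partial^\alpha H(z)$ is replaced by $\partial^\alpha U(z)$ picks up an error of size $O(\delta^2)\cdot\delta^{N-2}=O(\delta^N)$, which is exactly the order of the terms you are keeping in a truncation at level $N$. Hence the swap is genuinely harmless only for the first two contributing orders ($\delta^2$ and $\delta^3$); from $\delta^4$ on, the multiple scattering between $D$ and $\partial\Omega$ encoded in $H-U=-\mathcal{D}_\Omega[(u-U)|_+]+\mathcal{S}_\Omega[\partial_\nu(u-U)|_+]$ contributes at the same order as the retained terms and must either be tracked (iterating the representation) or the statement must be read with a remainder of the corresponding order. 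Since the paper states the identity with $U$ and offers no proof, this is as much a gap in the statement as in your argument, but your claim that the substitution is innocuous at every order is the one step that would not survive scrutiny as written.
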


\begin{rem}
In view of \eqref{thm:dip_expm}, the GPTs $M_{\alpha\beta} (\wh{\gamma}, B)$ can be reconstructed from measurements of ${\partial u}/{\partial \nu} - {\partial U}/{\partial \nu}$ corresponding to different positions of the fish. As in \cite{Am3}, the number of GPTs which can be reconstructed accurately for a given signal-to-noise ratio can be determined in terms of the ratio between the characteristic size  of the target and its distance to the fish. The resolving formula derived in \cite{Am3} holds. 
Moreover, it is worth mentioning that if the target is  homogeneous, then the GPTs reduce to those first introduced and investigated in \cite{pt1,pt2}. In fact, if $\gamma |_{D} \equiv k$, $ 0< k \ne 1 < +\infty$, then
\[\begin{split} M_{\alpha \beta}(k, D)  &=  \int_{\partial D}  y^{\alpha}   \left ( \mathcal{T}^{-1}_{D} - \left ( \frac{I}{2} + \mathcal{K}_D^* \right )   \right )^{-1} \left ( \frac{\partial x^\beta}{\partial \nu} \biggr |_{\partial D} \right ) (y) \text{ d} s_y\\&  =  \int_{\partial D}  y^{\alpha}   \left ( \lambda I - \mathcal{K}_D^*  \right )^{-1} \left ( \frac{\partial x^\beta}{\partial \nu} \biggr |_{\partial D} \right ) (y) \text{ d} s_y,\end{split} \]
where $\lambda = \frac{k + 1}{2(k-1)}$.

\end{rem}

\section{Properties of the CGPTs }  \label{sec2}
The goal of this section is to provide transformation formulas for the contracted GPTs introduced in Definition \ref{def:cgpts}.

\subsection{Translation formula}

We want to investigate how the quantity 
\begin{equation} \label{eq:original} M^{cc}_{mn} (\gamma , D) = M_{mn} =  \int_{\partial D}  \text{Re}(z^m)  \mathcal{T}_D  [ g_n^c](y) \text{ d} s_y  \,\end{equation}
changes with respect to a translation of $D$. 

\medskip

We denote by $\wh{x} = x + z$, $\wh{D}:= D + z$, $\wh{1}(\wh{x}) := 1$,  and $\wh{\gamma}(\wh{x}):= \gamma (x)$. We want to relate $M^{cc}_{mn} (\gamma , D)$ with $ M^{cc}_{mn} (\wh{\gamma} , \wh{D})$ defined by
\begin{equation*}\label{eq:transl} M^{cc}_{mn} (\wh{\gamma}, \wh{D}) = \wh{M}_{mn}^{cc} =  \int_{\partial \wh{D}}  \text{Re}(\wh{x}^m)  (  \wh{g}_n^c(\wh{x}) -   \Lambda_{\wh{1}}^{-1}\Lambda_{\wh{\gamma}}^{\wh{D}} [ \wh{g}_n^c](\wh{x}) ) \text{ d} s_{\wh{x}} . \end{equation*}
By the change of variables $\wh{x} = x + z$, we obtain
\begin{equation} \label{eq:transl_2} M^{cc}_{mn} (\wh{\gamma} , \wh{D}) = \wh{M}_{mn}^{cc} =  \int_{\partial D}  \text{Re}((x+z)^m)  (  \wh{g}_n^c(x+z) -   \Lambda_{\wh{1}}^{-1}\Lambda_{\wh{\gamma}}^{\wh{D}} [ \wh{g}_n^c](x+z) ) \text{ d} s_{x} \,. \end{equation}
\begin{lem}\label{lemma:gnc_translation} We have
\begin{equation} \label{eq:formula_gnc_translation} \wh{g}_n^c(x+z) = \sum_{k=1}^n \left ( \begin{matrix} n \\ k \end{matrix} \right )  [ g^c_k(x)   r_z^{n-k} \cos ( (n-k ) \theta_z) - g^s_k(x)  r_z^{n-k}  \sin ( (n-k ) \theta_z) ] \,, 
\end{equation}
and
\begin{equation} \label{eq:formula_gns_translation} \wh{g}_n^s(x+z) = \sum_{k=1}^n \left ( \begin{matrix} n \\ k \end{matrix} \right )  [ g^s_k(x)   r_z^{n-k} \cos ( (n-k ) \theta_z) + g^c_k(x)  r_z^{n-k}  \sin ( (n-k ) \theta_z) ] \,,
\end{equation}
where $z= r_z (\cos \theta_z, \sin \theta_z)$ in polar coordinates.  
\end{lem}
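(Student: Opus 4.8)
The plan is to isolate two ingredients — the translation covariance of the boundary operators that define $g_n^c$ and $g_n^s$, and the binomial expansion of a complex power — and let the rest be bookkeeping. Recall from \eqref{eq:GPT}, Definition~\ref{def:cgpts} and Proposition~\ref{prop:equiv_ident} that, with $\zeta=x_1+ix_2$,
\begin{equation*}
 g_n^c=\left(I-\left(\tfrac I2+\mathcal K_D^*\right)\mathcal T_D\right)^{-1}\!\left[\frac{\partial}{\partial\nu}\mathrm{Re}(\zeta^n)\right],\qquad
 g_n^s=\left(I-\left(\tfrac I2+\mathcal K_D^*\right)\mathcal T_D\right)^{-1}\!\left[\frac{\partial}{\partial\nu}\mathrm{Im}(\zeta^n)\right],
\end{equation*}
so that $g_n^c$ and $g_n^s$ are nothing but $\gamma\,\partial u/\partial\nu|_-$ for the solution of the free-space transmission problem \eqref{eq:problem_Am1} with $h=\mathrm{Re}(\zeta^n)$ and $h=\mathrm{Im}(\zeta^n)$ respectively; likewise $\widehat g_n^c$ and $\widehat g_n^s$ are the corresponding quantities on $\partial\widehat D$ built from $\widehat\gamma$ and from $\mathrm{Re}(\widehat\zeta^{\,n})$, $\mathrm{Im}(\widehat\zeta^{\,n})$ with $\widehat\zeta=\widehat x_1+i\widehat x_2$.

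I would first record that all the operators in play commute with translation once the conductivity is transported. Writing $\tau_z f:=f(\,\cdot\,+z)$ for the pull-back of functions on $\partial\widehat D$ to functions on $\partial D$: the kernel of $\mathcal K_{\widehat D}^{*}$ depends only on the difference of its two arguments, so $\tau_z\mathcal K_{\widehat D}^{*}=\mathcal K_D^{*}\tau_z$; the interior conductivity problems defining $\Lambda^{\widehat D}_{\widehat 1}$ and $\Lambda^{\widehat D}_{\widehat\gamma}$ transform rigidly under $\widehat x\mapsto\widehat x-z$ because $\widehat\gamma(\widehat x)=\gamma(\widehat x-z)$, so $\tau_z\Lambda^{\widehat D}_{\widehat 1}=\Lambda^{D}_{1}\tau_z$ and $\tau_z\Lambda^{\widehat D}_{\widehat\gamma}=\Lambda^{D}_{\gamma}\tau_z$; hence $\tau_z\mathcal T_{\widehat D}=\mathcal T_D\tau_z$ and, taking inverses, $\tau_z\bigl(I-(\tfrac I2+\mathcal K_{\widehat D}^{*})\mathcal T_{\widehat D}\bigr)^{-1}=\bigl(I-(\tfrac I2+\mathcal K_{D}^{*})\mathcal T_{D}\bigr)^{-1}\tau_z$. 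Since the outward normal is translation invariant, $\tau_z$ also intertwines $\partial/\partial\nu$ on $\partial\widehat D$ with $\partial/\partial\nu$ on $\partial D$. Putting these together,
\begin{equation*}
 \widehat g_n^c(x+z)=\left(I-\left(\tfrac I2+\mathcal K_D^*\right)\mathcal T_D\right)^{-1}\!\left[\frac{\partial}{\partial\nu}\bigl(\mathrm{Re}(\widehat\zeta^{\,n})(\,\cdot\,+z)\bigr)\right](x),
\end{equation*}
and likewise for $\widehat g_n^s$ with $\mathrm{Im}$ in place of $\mathrm{Re}$.

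It only remains to expand the translated polynomials. With $w=x_1+ix_2$ and $\zeta_z=z_1+iz_2=r_z e^{i\theta_z}$ one has $\widehat\zeta^{\,n}=(w+\zeta_z)^n=\sum_{k=0}^n\binom nk w^k\zeta_z^{\,n-k}$; expanding $\zeta_z^{\,n-k}=r_z^{\,n-k}\bigl(\cos((n-k)\theta_z)+i\sin((n-k)\theta_z)\bigr)$ and separating real and imaginary parts expresses $\mathrm{Re}(\widehat\zeta^{\,n})(x+z)$ and $\mathrm{Im}(\widehat\zeta^{\,n})(x+z)$ as sums over $k=0,\dots,n$ of $\mathrm{Re}(w^k)$ and $\mathrm{Im}(w^k)$ with exactly the trigonometric coefficients appearing in \eqref{eq:formula_gnc_translation}--\eqref{eq:formula_gns_translation}. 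The $k=0$ contribution is the constant $r_z^n\cos(n\theta_z)$ (respectively $r_z^n\sin(n\theta_z)$), which is killed by $\partial/\partial\nu$ — this is precisely why the sums in \eqref{eq:formula_gnc_translation}--\eqref{eq:formula_gns_translation} run from $k=1$. Differentiating the remaining terms and applying the linear resolvent, which by definition sends $\partial_\nu\mathrm{Re}(w^k)$ to $g_k^c$ and $\partial_\nu\mathrm{Im}(w^k)$ to $g_k^s$, yields \eqref{eq:formula_gnc_translation} and \eqref{eq:formula_gns_translation}. The one step requiring care is the translation covariance of $\Lambda_\gamma$ (and hence of $\mathcal T_D$ and of the resolvent) with $\gamma$ carried along; everything else is elementary, and one stays within $H_0^{-1/2}(\partial D)$ throughout since normal derivatives of harmonic polynomials have zero mean.
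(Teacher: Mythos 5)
Your proof is correct. The mechanism is the same as the paper's --- binomial expansion of the translated harmonic background in polar form, the vanishing of the $k=0$ constant term under $\partial/\partial\nu$, and linearity plus translation equivariance of the map $h\mapsto \gamma\,\partial u/\partial\nu|_-$ --- but you carry out the equivariance step at the level of the boundary operators, via the resolvent identity of Proposition \ref{prop:equiv_ident} and the intertwining relations $\tau_z\mathcal K_{\wh D}^*=\mathcal K_D^*\tau_z$, $\tau_z\Lambda^{\wh D}_{\wh\gamma}=\Lambda^D_\gamma\tau_z$. The paper instead changes variables directly in the exterior transmission problem \eqref{eq:lemma_gnc_translation}, observes that $v_n^c(x)=\wh u_n^c(x+z)$ solves the same equation with the translated background harmonic, and concludes by uniqueness and linearity of the solution map; this avoids any discussion of the individual operators. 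Your route requires a bit more verification up front, but it buys something: the translation covariance of $\Lambda_{\wh 1}^{-1}\Lambda_{\wh\gamma}^{\wh D}$ that you establish is exactly what the paper has to prove separately afterwards (via the Neumann function $\wh N_1$, identity \eqref{eq:identity_Nfc_transl}) to obtain \eqref{eq:lambda_translation} on the way to the translation formula for the CGPTs, so your argument packages the two steps into one. Both proofs are complete; just make sure the reader knows that $g_k^c$ in your final line is, by Proposition \ref{prop:equiv_ident}, the same object $\gamma\,\partial u_k^c/\partial\nu|_-$ that appears in the paper's definition, which you do state at the outset.
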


\begin{proof} Let $\wh{u}^c_n$ be the solution to
\begin{equation} \label{eq:lemma_gnc_translation} 
\begin{cases} \nabla_{\wh{x}} \cdot \wh{\gamma}(\wh{x}) \nabla_{\wh{x}} \wh{u}^c_n (\wh{x}) = 0 , &\qquad \mbox{in } \RR^2  \\  \wh{u}^c_n (\wh{x}) - \text{Re}(\wh{x}^n)   = O(|{\wh{x}}|^{-1})  &  \qquad \mbox{as } |\wh{x}| \to + \infty ,\end{cases} \end{equation}
Then, by definition, $\wh{g}_n^c:= \wh{\gamma} \dfrac{\partial \wh{u}^c_n}{\partial \nu_{\wh{x}}}$. Using the change of variables in \eqref{eq:lemma_gnc_translation} and setting  $v^c_n(x):= \wh{u}^c_n  (x+z)$,  we obtain
\begin{equation*} \label{eq:lemma_gnc_translation_2} \begin{cases} \nabla_{{x}} \cdot {\gamma}({x}) \nabla_{{x}} v^c_n  (x) = 0 &\qquad \mbox{in } \RR^2 , \\  {v}^c_n ({x}) - \text{Re}({(x+z)}^n) = O(|x|^{-1})  & \qquad \mbox{as } |x| \to + \infty .\end{cases} \end{equation*}
From
\[ \text{Re} ( ( x + z )^n ) = \sum_{k=0}^n \left ( \begin{matrix} n \\ k \end{matrix} \right )  r_x^k r_z^{n-k} [ \cos (k \theta_x) \cos ( (n-k ) \theta_z) -  \sin (k \theta_x) \sin ( (n-k ) \theta_z) ] , \]
it follows that
\[ {v}^c_n ({x}) = \sum_{k=0}^n \left ( \begin{matrix} n \\ k \end{matrix} \right )  [ h^c_k(x)   r_z^{n-k} \cos ( (n-k ) \theta_z) - h^s_k(x)  r_z^{n-k}  \sin ( (n-k ) \theta_z) ] . \]
Hence, 
\[ \wh{g}^c_n(x+z) = \sum_{k=1}^n \left ( \begin{matrix} n \\ k \end{matrix} \right )  [ g^c_k(x)   r_z^{n-k} \cos ( (n-k ) \theta_z) - g^s_k(x)  r_z^{n-k}  \sin ( (n-k ) \theta_z) ] .\]
Analogously we derive formula \eqref{eq:formula_gns_translation} for $\wh{g}_n^s$.
\end{proof}
 
 To relate \eqref{eq:original} and \eqref{eq:transl_2} we consider the operator \[ \Lambda_1^{-1}\Lambda_{{\gamma}}^{{D}}: H^{-1/2}_0(\partial {D}) \arr H^{-1/2}_0(\partial {D})\] 
\[ {g}_n^c \longmapsto \frac{\partial  {h}}{\partial \nu_{{x}} } \biggr |_{-} , \]
 where $ {h}$ is the solution to the boundary value problem
\begin{equation} \label{eq:bvp_transl} \begin{cases} \Laplacian {h} = 0  &\quad \mbox{in } {D} , \\ \nabla_x h \cdot \nu_x =  {g}_n^c   &\quad \mbox{on } \partial {D} .\end{cases} \end{equation}

Let ${y} \in {D}$ and consider the corresponding Neumann function ${N}_1({x},{y})$, that is, the solution to
\begin{equation} \label{eq:N_bvp_transl} \begin{cases} \Laplacian_{{x}} {N}_1 ({x},{y}) = - \delta_y(x) , &  \qquad {x}\in {D} , \\ \nabla_{{x}} {N}_1({x},{y}) \cdot \nu_{{x}} = \frac{1}{|\partial {D}|} ,  & \qquad {x} \in \partial {D} , \\ \displaystyle \int_{\partial D} N_1(x,y) \dds_x = 0 .\end{cases} \end{equation}

Then the solution $ {h}$ of \eqref{eq:bvp_transl}  can be represented by means of this Neumann function
\[  {h}({y}) = \int_{\partial {D}} {N}_1 ({x} , {y} )  {g}({x}) \text{ d} s_{{x}} . \]
So, for $y \in \partial D$, we have
\begin{equation} \label{eq:integral_representation_original_setting} \Lambda_1^{-1}\Lambda_{{\gamma}}^{{D}} [g_n^c] (y) = \nabla_y h(y) \cdot \nu_y |_{\partial D} =  \int_{\partial {D}} \nabla_{y} {N}_1 ({x} , {y} ) \cdot \nu_{y}  \,{g}_n^c({x}) \text{ d} s_{{x}} .\end{equation}

\medskip

Now we proceed similarly. We let the operator $\Lambda_{\wh{1}}^{-1}\Lambda_{\wh{\gamma}}^{\wh{D}}: H^{-1/2}_0(\partial \wh{D}) \arr H^{-1/2}_0(\partial \wh{D})$ be defined by
\[ \wh{g}_n^c \longmapsto \frac{\partial  \wh{h}}{\partial \nu_{\wh{x}} } \biggr |_{-}  , \]
with $ \wh{h}$ being the solution of
\begin{equation} \label{eq:bvphat_transl} \begin{cases} \Laplacian \wh{h} = 0 & \quad \mbox{in } \wh{D} ,\\ \nabla_{\wh{x}} \wh{h} \cdot {\nu_{\wh{x}}} =  \wh{g}_n^c  & \quad \mbox{on } \partial \wh{D}.\end{cases} \end{equation}

Let $\wh{y} = y + z \in \wh{D}$ and consider the corresponding Neumann function $\wh{N}_1(\wh{x},\wh{y})$, that is,  the solution to
\begin{equation*} \label{eq:N_bvphat_transl} \begin{cases} \Laplacian_{\wh{x}} \wh{N}_1 (\wh{x},\wh{y}) = - \delta_{\wh{y}}(\wh{x}),  &  \qquad \wh{x}\in \wh{D}, \\ \nabla_{\wh{x}} \wh{N}_1(\wh{x},\wh{y}) \cdot \nu_{\wh{x}} =- \frac{1}{|\partial \wh{D}|} , & \qquad \wh{x} \in \partial \wh{D}, \\\displaystyle \int_{\partial \wh{D}} \wh{N}_1(\wh{x},\wh{y}) \dds_{\wh{x}} = 0, \end{cases} \end{equation*}
that can be written as
\begin{equation} \label{eq:N_bvphat_transl_2} \begin{cases} \Laplacian_{{x}} \wh{N}_1 (x + z,y + z) = - \delta_{y}(x) , &  \qquad {x}\in {D} ,\\ \nabla_{{x}} \wh{N}_1({x} + z,{y}+z) \cdot \nu_{{x}} = -\frac{1}{|\partial {D}|} , & \qquad {x} \in \partial {D} ,\\ \displaystyle \int_{\partial {D}} \wh{N}_1({x}+z,{y}+z) \dds_{{x}} = 0. \end{cases} \end{equation}
Comparing \eqref{eq:N_bvphat_transl_2} and \eqref{eq:N_bvp_transl},  we observe that  $\wh{N}_1 (x + z,y + z)$ and ${N}_1 (x,y)$ satisfy the same boundary value problem \eqref{eq:N_bvp_transl}. The uniqueness of a solution to \eqref{eq:N_bvp_transl} yields
\begin{equation} \label{eq:identity_Nfc_transl} \wh{N}_1 (x + z,y + z) = {N}_1 (x,y) \, .\end{equation}

The solution $ \wh{h}$ to \eqref{eq:bvphat_transl}  can be represented by means of the Neumann function $\wh{N}_1$:
\[ \wh{h}(\wh{y}) = \int_{\partial \wh{D}} \wh{N}_1 (\wh{x} , \wh{y} )  \,\wh{g}_n^c(\wh{x}) \text{ d} s_{\wh{x}} .  \]
Moreover, for $y \in \partial D$, we have
\[ \begin{split} \Lambda_{\wh{1}}^{-1} \Lambda_{\wh{\gamma}}^{\wh{D}} [\wh{g}_n^c] (\wh{y} )  & = \nabla_{\wh{y}}\,\wh{h}(\wh{y}) \cdot \nu_{\wh{y}} \\ &= \int_{\partial \wh{D}}  (\nabla_{\wh{y}} \, \wh{N}_1) (\wh{x} , \wh{y} ) \cdot \nu_{\wh{y}} \,\wh{g}_n^c(\wh{x}) \text{ d} s_{\wh{x}} \\ & = \int_{\partial {D}}  (\nabla_{y} \, \wh{N}_1) ({x} + z , \wh{y}) \cdot \nu_{{y}} \, \wh{g}_n^c(x + z) \text{ d} s_{{x}} \\ & = \int_{\partial {D}}  (\nabla_{y} \, {N}_1) ({x} , y) \cdot \nu_{{y}} \, \sum_{k=1}^n \left ( \begin{matrix} n \\ k \end{matrix} \right )  [ g^c_k(x)   r_z^{n-k} \cos ( (n-k ) \theta_z) - g^s_k(x)  r_z^{n-k}  \sin ( (n-k ) \theta_z) ]\text{ d} s_{{x}} 
\\ & =  \sum_{k=1}^n \left ( \begin{matrix} n \\ k \end{matrix} \right )  r_z^{n-k} \left [ \cos ( (n-k ) \theta_z) \int_{\partial {D}}  (\nabla_{y} \, {N}_1) ({x} , y) \cdot \nu_{{y}} \,  g^c_k(x)  \text{ d} s_{{x}} \right . \\ & \left . -   \sin ( (n-k ) \theta_z) \,\int_{\partial {D}}  (\nabla_{y} \, {N}_1) ({x} , y) \cdot \nu_{{y}} \,  g^s_k(x) \text{ d} s_{{x}} \right ] 
\\ & =  \sum_{k=1}^n \left ( \begin{matrix} n \\ k \end{matrix} \right )  r_z^{n-k} \left [ \cos ( (n-k ) \theta_z)\,  \Lambda_1^{-1}\Lambda_{{\gamma}}^{{D}} [g^c_k] (y) -   \sin ( (n-k ) \theta_z) \,\Lambda_1^{-1}\Lambda_{{\gamma}}^{{D}} [g^s_k] (y) \right ] \,.\end{split} \]
Here we have made the change of variables $\wh{x} = x + z$ and  used  identity \eqref{eq:identity_Nfc_transl}. Therefore, it follows that
\begin{equation} \label{eq:lambda_translation} \Lambda_{\wh{1}}^{-1} \Lambda_{\wh{\gamma}}^{\wh{D}} [\wh{g}] (\wh{y} )  = \sum_{k=1}^n \left ( \begin{matrix} n \\ k \end{matrix} \right )  r_z^{n-k} \left [ \cos ( (n-k ) \theta_z)\,  \Lambda_1^{-1}\Lambda_{{\gamma}}^{{D}} [g^c_k] (y) -   \sin ( (n-k ) \theta_z) \,\Lambda_1^{-1}\Lambda_{{\gamma}}^{{D}} [g^s_k] (y) \right ] . \end{equation}

Hence\[  M^{cc}_{mn} (\wh{\gamma} , \wh{D})  =  \int_{\partial D}  \Re ((x+z)^m)  (  g_n^c(x) -   \Lambda_1^{-1}\Lambda_{\gamma}^{D} [ g_n^c](x) ) \text{ d} s_{x} .\]
Using the identity 
\[ \Re( ( x + z )^m ) = \sum_{k=0}^m \left ( \begin{matrix} m \\ k \end{matrix} \right )  r_x^k r_z^{m-k} [ \cos (k \theta_x) \cos ( (m-k ) \theta_z) -  \sin (k \theta_x) \sin ( (m-k ) \theta_z) ] \,,\]
we get
\begin{equation} \label{eq:gpt_translation_a} \begin{split} M^{cc}_{mn} (\wh{\gamma} , \wh{D})  = \int_{\partial D}  \Re ((x+z)^m)  (  \wh{g}_n^c(x+z) -   \Lambda_{\wh{1}}^{-1}\Lambda_{\wh{\gamma}}^{\wh{D}} [ \wh{g}_n^c](x+z) ) \text{ d} s_{x} 
\\ = \sum_{k=1}^m r_z^{m-k} \left ( \begin{matrix} m \\ k \end{matrix} \right ) \left [  \cos ( (m-k ) \theta_z)  \int_{\partial D}  r_x^k  \cos (k \theta_x) (  \wh{g}_n^c(x+z) -   \Lambda_{\wh{1}}^{-1}\Lambda_{\wh{\gamma}}^{\wh{D}} [ \wh{g}_n^c ](x+z) ) \text{ d} s_{x} \right . \\  \left . - \sin ( (m-k ) \theta_z)  \int_{\partial D}  r_x^k  \sin (k \theta_x) (  \wh{g}_n^c(x+z) -   \Lambda_{\wh{1}}^{-1}\Lambda_{\wh{\gamma}}^{\wh{D}} [ \wh{g}_n^c](x+z) ) \text{ d} s_{x} \right ] .
  \end{split} \end{equation}
From Lemma \ref{lemma:gnc_translation} and formula \eqref{eq:lambda_translation}, we obtain 
\[\begin{split}  \wh{g}_n^c(x+z) -   \Lambda_{\wh{1}}^{-1}\Lambda_{\wh{\gamma}}^{\wh{D}} [ \wh{g}_n^c ](x+z)  = \sum_{r=1}^n \left ( \begin{matrix} n \\ r\end{matrix} \right )  r_z^{n-r} [ g^c_r(x)   \cos ( (n-r ) \theta_z) - g^s_r(x)  \sin ( (n-r ) \theta_z) ] \\ - \sum_{r=1}^n \left ( \begin{matrix} n \\ r \end{matrix} \right )  r_z^{n-r} \left [ \cos ( (n-r ) \theta_z)\,  \Lambda_1^{-1}\Lambda_{{\gamma}}^{{D}} [g^c_r] (x) -   \sin ( (n-r ) \theta_z) \,\Lambda_1^{-1}\Lambda_{{\gamma}}^{{D}} [g^s_r] (x) \right ]
\\ = \sum_{r=1}^n \left ( \begin{matrix} n \\ r \end{matrix} \right )  r_z^{n-r} \left [ g^c_r(x)   \cos ( (n-r ) \theta_z) - g^s_r(x)  \sin ( (n-r ) \theta_z)  \right . \\  \left . - ( \cos ( (n-r ) \theta_z)\,  \Lambda_1^{-1}\Lambda_{{\gamma}}^{{D}} [g^c_r] (x) -   \sin ( (n-r ) \theta_z) \,\Lambda_1^{-1}\Lambda_{{\gamma}}^{{D}} [g^s_r] (x) ) \right ]   \\
=  \sum_{r=1}^n \left ( \begin{matrix} n \\ r \end{matrix} \right )  r_z^{n-r} \left [ (g^c_r(x) - \Lambda_1^{-1}\Lambda_{{\gamma}}^{{D}} [g^c_r] (x) )   \cos ( (n-r ) \theta_z) 
 - ( g^s_r(x) - \Lambda_1^{-1}\Lambda_{{\gamma}}^{{D}} [g^s_r] (x) ) \sin ( (n-r ) \theta_z)  \right ] . \end{split}  
\]
So
\begin{equation}\label{eq:integral_translation_1} \begin{split} \int_{\partial D}  r_x^k  \cos (k \theta_x) (  \wh{g}_n^c(x+z) -   \Lambda_{\wh{1}}^{-1}\Lambda_{\wh{\gamma}}^{\wh{D}} [ \wh{g}_n^c ](x+z) ) \text{ d} s_{x} \\
 =
   \sum_{r=1}^n \left ( \begin{matrix} n \\ r \end{matrix} \right )  r_z^{n-r} \left [  \cos ( (n-r ) \theta_z)M^{cc}_{kr}  - \sin ( (n-r ) \theta_z) M^{cs}_{kr}  \right ] , \end{split} \end{equation}
   and
\begin{equation} \label{eq:integral_translation_2} \begin{split} \int_{\partial D}  r_x^k  \sin (k \theta_x) (  \wh{g}_n^c(x+z) -   \Lambda_{\wh{1}}^{-1}\Lambda_{\wh{\gamma}}^{\wh{D}} [ \wh{g}_n^c](x+z) ) \text{ d} s_{x}  \\
 = \sum_{r=1}^n \left ( \begin{matrix} n \\ r \end{matrix} \right )  r_z^{n-r} \left [  \cos ( (n-r ) \theta_z)M^{sc}_{kr}  - \sin ( (n-r ) \theta_z) M^{ss}_{kr}  \right ] . \end{split} \end{equation}

Plugging \eqref{eq:integral_translation_1} and \eqref{eq:integral_translation_2} into formula \eqref{eq:gpt_translation_a}, we arrive at
\begin{equation*} \label{eq:gpt_translation_cc} \begin{split} M^{cc}_{mn} (\wh{\gamma} , \wh{D})  &= \sum_{k=1}^m    \sum_{r=1}^n \left ( \begin{matrix} n \\ r \end{matrix} \right )  \left ( \begin{matrix} m \\ k \end{matrix} \right ) r_z^{n-r} r_z^{m-k}  \left \{  \cos ( (m-k ) \theta_z) \left [  \cos ( (n-r ) \theta_z)M^{cc}_{kr}  - \sin ( (n-r ) \theta_z) M^{cs}_{kr}  \right ] \right . \\ & \left . - \sin ( (m-k ) \theta_z)  \ \left [  \cos ( (n-r ) \theta_z)M^{sc}_{kr}  - \sin ( (n-r ) \theta_z) M^{ss}_{kr}  \right ] \right \} .
  \end{split} \end{equation*}

Analogously, we readily get
\begin{equation*} \label{eq:gpt_translation_sc} \begin{split} M^{sc}_{mn} (\wh{\gamma} , \wh{D})  &= \sum_{k=1}^m    \sum_{r=1}^n \left ( \begin{matrix} n \\ r \end{matrix} \right )  \left ( \begin{matrix} m \\ k \end{matrix} \right ) r_z^{n-r} r_z^{m-k}  \left \{  \cos ( (m-k ) \theta_z) \left [  \cos ( (n-r ) \theta_z)M^{sc}_{kr}  - \sin ( (n-r ) \theta_z) M^{ss}_{kr}  \right ] \right . \\ & \left . + \sin ( (m-k ) \theta_z)  \left [  \cos ( (n-r ) \theta_z)M^{cc}_{kr}  - \sin ( (n-r ) \theta_z) M^{cs}_{kr}  \right ] \right \} ,
  \end{split} \end{equation*}

\begin{equation*} \label{eq:gpt_translation_cs} \begin{split} M^{cs}_{mn} (\wh{\gamma} , \wh{D}) &= \sum_{k=1}^m    \sum_{r=1}^n \left ( \begin{matrix} n \\ r \end{matrix} \right )  \left ( \begin{matrix} m \\ k \end{matrix} \right ) r_z^{n-r} r_z^{m-k}  \left \{  \cos ( (m-k ) \theta_z) \left [  \cos ( (n-r ) \theta_z)M^{cs}_{kr}  + \sin ( (n-r ) \theta_z) M^{cc}_{kr}  \right ] \right . \\ & \left . - \sin ( (m-k ) \theta_z)  \ \left [  \cos ( (n-r ) \theta_z)M^{ss}_{kr}  + \sin ( (n-r ) \theta_z) M^{sc}_{kr}  \right ] \right \} ,
  \end{split}  \end{equation*}

\begin{equation*} \label{eq:gpt_translation_ss} \begin{split} M^{ss}_{mn} (\wh{\gamma} , \wh{D})  &= \sum_{k=1}^m    \sum_{r=1}^n \left ( \begin{matrix} n \\ r \end{matrix} \right )  \left ( \begin{matrix} m \\ k \end{matrix} \right ) r_z^{n-r} r_z^{m-k}  \left \{  \cos ( (m-k ) \theta_z) \left [  \cos ( (n-r ) \theta_z)M^{ss}_{kr}  + \sin ( (n-r ) \theta_z) M^{sc}_{kr}  \right ] \right . \\ & \left . + \sin ( (m-k ) \theta_z)  \ \left [  \cos ( (n-r ) \theta_z)M^{cs}_{kr}  + \sin ( (n-r ) \theta_z) M^{cc}_{kr}  \right ] \right \}.
  \end{split} \end{equation*}

We write these formulas compactly in a matrix form as follows:
\[  { \begin{bmatrix} \wh{M}^{cc}_{mn}&  \wh{M}^{sc}_{mn} \\  \wh{M}^{cs}_{mn}  &  \wh{M}^{ss}_{mn}  \end{bmatrix} = \sum_{k=1}^m \sum_{r=1}^n r_z^{m-k}  r_z^{n-r}  \left ( \begin{matrix} m \\ k \end{matrix} \right ) \left ( \begin{matrix} n\\ r \end{matrix} \right ) \mathbf{R}((n-r)\theta_z)   \begin{bmatrix} M_{k r}^{cc} & M_{k r}^{sc} \\ M_{kr}^{cs} & M_{kr}^{ss} \end{bmatrix}  \cdot \mathbf{R}((m-k)\theta_z)^T \, ,} \]
where $\mathbf{R}(\theta)$ is the matrix associated with the rotation by $\theta$, i.e.,
\begin{equation} \label{addmatrix} 
\mathbf{R}(\theta):= \begin{bmatrix} \cos \theta & - \sin \theta \\ \sin \theta & \cos \theta  \end{bmatrix} . \end{equation}

\subsection{Rotation formula}

We want to investigate how $M_{mn}$ changes with respect to a rotation of $D$ by an angle $\theta$.

\medskip

 Let $\theta \in [0, 2 \pi)$ and let $\mathbf{R} \in \RR^{2 \times 2}$ be the matrix associated with the rotation
\[ \mathbf{R}:= \mathbf{R}(\theta)  = \begin{bmatrix} \cos \theta & - \sin \theta \\ \sin \theta & \cos \theta  \end{bmatrix} . \]
Let us define $\Phi_{\mathbf{R}} (x):= \mathbf{R} x = e^{i \theta} x$.

\medskip

We denote $\wh{x} = \mathbf{R} x$, $\wh{D}:=  \Phi_{\mathbf{R}} ( D )$  and $\wh{\gamma}(\wh{x}):= \gamma (x)$. We want to relate $M^{cc}_{mn} (\gamma , D)$ with $ M^{cc}_{mn} (\wh{\gamma} , \wh{D})$ defined by 
\begin{equation}\label{eq:CGPTrotation} M^{cc}_{mn} (\wh{\gamma}, \wh{D}) = \wh{M}_{mn}^{cc} =  \int_{\partial \wh{D}}  \text{Re}({\wh{x}}^m)  (  \wh{g}_n^c(\wh{x}) -   \Lambda_{\wh{1}}^{-1}\Lambda_{\wh{\gamma}}^{\wh{D}} [ \wh{g}_n^c](\wh{x}) ) \text{ d} s_{\wh{x}}  . \end{equation}
By the change of variables $\wh{x} = \mathbf{R} x$, we obtain
\begin{equation}\label{eq:CGPTrotation_2} M^{cc}_{mn} (\wh{\gamma}, \wh{D}) = \wh{M}_{mn}^{cc}=  \int_{\partial D}  r_x^m \cos (m (\theta_x + \theta))\, (  \wh{g}_n^c(\mathbf{R} x) -   \Lambda_{\wh{1}}^{-1}\Lambda_{\wh{\gamma}}^{\wh{D}} [ \wh{g}_n^c](\mathbf{R} x) ) \text{ d} s_{x} \,. \end{equation}

\begin{lem}\label{lemma:gnc_rotation} We have
\begin{equation} \label{eq:formula_gnc_rotation}  \wh{g}_n^c(\mathbf{R} x ) =   \cos n \theta \,g^c_n(x)  - \sin n \theta  \,g^s_n(x) \,,
\end{equation}
and 
\begin{equation} \label{eq:formula_gns_rotation}  \wh{g}_n^s(\mathbf{R} x ) =   \cos n \theta \,g^s_n(x)  + \sin n \theta  \,g^c_n(x) \,. \end{equation}
\end{lem}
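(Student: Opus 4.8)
The plan is to follow the template of the proof of Lemma~\ref{lemma:gnc_translation}, with the translation replaced by the orthogonal substitution $\wh{x} = \mathbf{R}x$ and the binomial identity replaced by the conformal identity $(\mathbf{R}x)^n = e^{in\theta}x^n$, valid once a point of $\RR^2$ is identified with a complex number. First I would recall that $\wh{g}_n^c = \wh{\gamma}\,\partial\wh{u}_n^c/\partial\nu_{\wh{x}}$, where $\wh{u}_n^c$ solves \eqref{eq:lemma_gnc_translation} with coefficient $\wh{\gamma}$ and far-field pattern $\mathrm{Re}(\wh{x}^n)$, and likewise $\wh{g}_n^s$ is built from the solution with pattern $\mathrm{Im}(\wh{x}^n)$. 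Setting $v_n^c(x) := \wh{u}_n^c(\mathbf{R}x)$, the chain rule together with $\mathbf{R}^T\mathbf{R} = I$ shows that the divergence-form operator is invariant under the substitution, so $\nabla_x\cdot\gamma(x)\nabla_x v_n^c(x) = 0$ in $\RR^2$; and since $\mathbf{R}$ is an isometry, $|\mathbf{R}x| = |x|$, the decay condition becomes $v_n^c(x) - \mathrm{Re}((\mathbf{R}x)^n) = O(|x|^{-1})$ as $|x|\to+\infty$.

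Next I would expand the new far-field pattern. With $x$ identified with $x_1 + ix_2$ one has $\mathbf{R}x = e^{i\theta}x$, hence $(\mathbf{R}x)^n = e^{in\theta}x^n$, so that
\[ \mathrm{Re}((\mathbf{R}x)^n) = \cos(n\theta)\,\mathrm{Re}(x^n) - \sin(n\theta)\,\mathrm{Im}(x^n), \qquad \mathrm{Im}((\mathbf{R}x)^n) = \cos(n\theta)\,\mathrm{Im}(x^n) + \sin(n\theta)\,\mathrm{Re}(x^n). \]
By linearity of the boundary value problem \eqref{eq:lemma_gnc_translation} and uniqueness of its decaying solution (the same argument already invoked in the translation case), it follows that $v_n^c = \cos(n\theta)\,u_n^c - \sin(n\theta)\,u_n^s$, where $u_n^c$ and $u_n^s$ are the solutions with patterns $\mathrm{Re}(x^n)$ and $\mathrm{Im}(x^n)$, respectively.

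Finally I would transfer this identity to the conormal traces. On $\partial\wh{D}$ one has $\nu_{\wh{x}} = \mathbf{R}\nu_x$, while $\nabla_{\wh{x}}\wh{u}_n^c(\mathbf{R}x) = \mathbf{R}\nabla_x v_n^c(x)$; orthogonality of $\mathbf{R}$ then gives $\partial\wh{u}_n^c/\partial\nu_{\wh{x}}(\mathbf{R}x) = \partial v_n^c/\partial\nu_x(x)$. Combining this with $\wh{\gamma}(\mathbf{R}x) = \gamma(x)$ yields
\[ \wh{g}_n^c(\mathbf{R}x) = \gamma(x)\,\frac{\partial v_n^c}{\partial\nu_x}(x) = \cos(n\theta)\,g_n^c(x) - \sin(n\theta)\,g_n^s(x), \]
which is \eqref{eq:formula_gnc_rotation}; the formula \eqref{eq:formula_gns_rotation} is obtained in exactly the same way from the expansion of $\mathrm{Im}((\mathbf{R}x)^n)$.

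I do not anticipate a genuine obstacle here: the only two points requiring care are the invariance of the divergence-form equation under the orthogonal change of variables (a one-line computation using $\mathbf{R}^T\mathbf{R} = I$) and the bookkeeping of the conormal derivative under the rotation, and everything else reduces to the linearity-plus-uniqueness argument already used for the translation formula. The one normalization to keep an eye on is that the decay $O(|x|^{-1})$ is preserved, which holds precisely because $\mathbf{R}$ is an isometry.
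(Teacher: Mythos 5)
Your proof is correct and follows essentially the same route as the paper: set $v_n^c(x)=\wh{u}_n^c(\mathbf{R}x)$, use invariance of the divergence-form equation and the expansion $\mathrm{Re}((\mathbf{R}x)^n)=\cos(n\theta)\,\mathrm{Re}(x^n)-\sin(n\theta)\,\mathrm{Im}(x^n)$ together with linearity and uniqueness to get $v_n^c=\cos(n\theta)\,u_n^c-\sin(n\theta)\,u_n^s$, and then pass to the conormal traces. Your explicit bookkeeping of $\nu_{\wh{x}}=\mathbf{R}\nu_x$ and $\nabla_{\wh{x}}\wh{u}_n^c(\mathbf{R}x)=\mathbf{R}\nabla_x v_n^c(x)$ in the last step is a welcome addition, as the paper leaves that transfer implicit.
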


\begin{proof} Let $\wh{u}^c_n$ be the solution to
\begin{equation} \label{eq:lemma_gnc_rotation}  \begin{cases} \nabla_{\wh{x}} \cdot \wh{\gamma}(\wh{x}) \nabla_{\wh{x}} \wh{u}^c_n (\wh{x}) = 0  &\qquad \mbox{in } \RR^2 , \\  \wh{u}^c_n (\wh{x}) - r^{n}_{\wh{x}} \cos (n \theta_{\wh{x}})  = O(r_{\wh{x}}^{-1})  &  \qquad \mbox{as } |\wh{x}| \to + \infty . \end{cases}  \end{equation}
Then, by definition, we have $\wh{g}_n^c:=  \dfrac{\partial \wh{u}^c_n}{\partial \nu_{\wh{x}}} \biggr |_+$. By a change of variables in \eqref{eq:lemma_gnc_rotation} and by setting  $v^c_n(x):= \wh{u}^c_n  (\mathbf{R}x )$, we obtain
\[ \label{eq:lemma_gnc_rotation_2} \begin{cases} \nabla_{{x}} \cdot {\gamma}({x}) \nabla_{{x}} v^c_n  (x) = 0&\qquad \mbox{in } {\RR^2}  ,\\  {v}^c_n ({x}) - r^{n}_x \cos (n ( \theta_x + \theta)) = O(r^{-1}_x)  &  \qquad \mbox{as } |x| \to +\infty. \end{cases} \]
Hence, 
\[ \label{eq:lemma_gnc_rotation_3} \begin{cases} \nabla_{{x}} \cdot {\gamma}({x}) \nabla_{{x}} v^c_n  (x) = 0  &\qquad \mbox{in } {\RR^2}  ,\\  {v}^c_n ({x}) - (r^{n}_x \cos n \theta_x \cos n \theta - r^{n}_x \sin n \theta_x \sin n \theta ) = O(r^{-1}_x)  &  \qquad \mbox{as } |x| \to +\infty,\end{cases} \]
or equivalently, 
\[\label{eq:lemma_gnc_rotation_4} 
\begin{cases} \nabla_{{x}} \cdot {\gamma}({x}) \nabla_{{x}} v^c_n  (x) = 0 &\qquad \mbox{in } {\RR^2}  ,\\  {v}^c_n ({x}) - (h_n^c(x) \theta_x \cos n \theta - h_n^s(x) \sin n \theta )  = O(r^{-1}_x)   &  \qquad \mbox{as } |x| \to +\infty.\end{cases} \]
Therefore,
\[ {v}^c_n ({x}) =  \cos n \theta \,u^c_n(x)  - \sin n \theta  \,u^s_n(x)  . \]
Analogously we derive formula (\ref{eq:formula_gns_rotation}) for $\wh{g}_n^s$.
\end{proof}

 In order to relate \eqref{eq:original} and \eqref{eq:CGPTrotation_2} we need to have a better understanding of the boundary operator that we are integrating. We have already written an integral representation for the operator $\Lambda_1^{-1}\Lambda_{{\gamma}}^{{D}}: H^{-1/2}_0(\partial {D}) \arr H^{-1/2}_0(\partial {D})$ in the previous subsection (see \eqref{eq:integral_representation_original_setting}).
Now, we proceed similarly for the operator that plays a role in the rotated problem: 

\[\Lambda_{\wh{1}}^{-1}\Lambda_{\wh{\gamma}}^{\wh{D}}: H^{-1/2}_0(\partial \wh{D}) \arr H^{-1/2}_0(\partial \wh{D}) \]
\[ \wh{g}_n^c \longmapsto \frac{\partial  \wh{h}}{\partial \nu_{\wh{x}} } \biggr |_{-} ,\]
where $ \wh{h}$ is the solution to the boundary value problem
\begin{equation} \label{eq:bvphat_rotation} \begin{cases} \Laplacian \wh{h} = 0  &\quad \mbox{in } \wh{D}, \\ \nabla_{\wh{x}} \wh{h} \cdot {\nu_{\wh{x}}} =  \wh{g}^c_n & \quad \mbox{on } \partial \wh{D} .\end{cases} \end{equation}

Let $\wh{y} = \mathbf{R} y \in \wh{D}$ and consider the corresponding Neumann function $\wh{N}_1(\wh{x},\wh{y})$, that is,  the solution to
\begin{equation*} \label{eq:N_bvphat_rotation} \begin{cases} \Laplacian_{\wh{x}} \wh{N}_1 (\wh{x},\wh{y}) = - \delta_{\wh{y}}(\wh{x}) , &  \qquad \wh{x}\in \wh{D} , \\ \nabla_{\wh{x}} \wh{N}_1(\wh{x},\wh{y}) \cdot \nu_{\wh{x}} = - \frac{1}{|\partial \wh{D}|} , & \qquad \wh{x} \in \partial \wh{D}, \\ \displaystyle \int_{\partial \wh{D}} \wh{N}_1(\wh{x},\wh{y}) \dds_{\wh{x}} = 0. \end{cases} \end{equation*}
Exploiting the rotational invariance of the Laplacian gives
\begin{equation*} \label{eq:N_bvphat_rotation_2} \begin{cases} (\Laplacian_{{x}} \wh{N}_1 \circ \Phi_\mathbf{R}) (x, \mathbf{R}y) = - \delta_{\mathbf{R} y}(\mathbf{R}x)  = -  \delta_y (x) ,&  \qquad {x}\in {D}, \\   \mathbf{R} (\nabla_{{x}}\wh{N}_1 \circ \Phi_\mathbf{R} )( x ,\mathbf{R}y) \cdot (\mathbf{R} \nu_{{x}} )= - \frac{1}{|\partial {D}|} , & \qquad {x} \in \partial {D} ,\\ \displaystyle \int_{\partial {D}} \wh{N}_1(\mathbf{R} {x},\wh{y}) \dds_{{x}} = 0 .\end{cases} \end{equation*}
Therefore, 
\begin{equation*} \label{eq:N_bvphat_rotation_3} \begin{cases} (\Laplacian_{{x}} \wh{N}_1 \circ \Phi_{\mathbf{R}} ) ( x, \mathbf{R}y) = - \delta_{\mathbf{R}y}(\mathbf{R}x)  = -  \delta_y (x), &  \qquad {x}\in {D}, \\   (\nabla_{{x}}\wh{N}_1 \circ \Phi_{\mathbf{R}} )(  x , \mathbf{R}y) \cdot \nu_{{x}} = -\frac{1}{|\partial {D}|} , & \qquad {x} \in \partial {D}, \\ \displaystyle \int_{\partial {D}} \wh{N}_1(\mathbf{R} {x},\mathbf{R}  {y}) \dds_{{x}} = 0 .\end{cases} \end{equation*}

One can easily see that  $\wh{N}_1 ( \mathbf{R}x , \mathbf{R} y)$ and ${N}_1 (x ,  y)$ satisfy exactly the same boundary value problem \eqref{eq:N_bvp_transl}. The uniqueness of a solution to \eqref{eq:N_bvp_transl} yields
\begin{equation} \label{eq:identity_Nfc_rotation} \wh{N}_1 (\mathbf{R} x , \mathbf{R} y) = {N}_1 (x,y) \, .\end{equation}

Since the solution $\wh{h}$ to \eqref{eq:bvphat_rotation} can be represented by means of the Neumann function $\wh{N}_1$:
\[ \wh{h}(\wh{y}) = \int_{\partial \wh{D}} \wh{N}_1 (\wh{x} , \wh{y} )  \,\wh{g}_n^c(\wh{x}) \text{ d} s_{\wh{x}}\, ,  \]
we have, for $y \in \partial D$,
\[ \begin{split} \Lambda_{\wh{1}}^{-1} \Lambda_{\wh{\gamma}}^{\wh{D}} [\wh{g}_n^c] (\wh{y} )  & = \nabla_{\wh{y}}\,\wh{h}(\wh{y}) \cdot \nu_{\wh{y}} 
\\ &= \int_{\partial \wh{D}}  (\nabla_{\wh{y}} \, \wh{N}_1) (\wh{x} , \wh{y} ) \cdot \nu_{\wh{y}} \,\wh{g}_n^c(\wh{x}) \text{ d} s_{\wh{x}} 
\\ & = \int_{\partial {D}}  (\nabla_{\wh{y}} \, \wh{N}_1) ( \mathbf{R} x , \wh{y}) \cdot \nu_{\wh{y}} \, \wh{g}_n^c(\mathbf{R}x ) \,\text{ d} s_{{x}} \\ & = \int_{\partial {D}}  \mathbf{R} (\nabla_{y} \,  \wh{N}_1 ( \mathbf{R}x , \mathbf{R} \, \cdot \,) ) (\mathbf{R} {x} , y) \cdot \mathbf{R} \, \nu_{{y}} \, ( \cos n\theta \,g^c_n(x)  - \sin n \theta  \,g^s_n(x) )  \text{ d} s_{{x}} 
 \\ & = \int_{\partial {D}}  (\nabla_{y} \, N_1 ) ({x} , y) \cdot \nu_{y} \, ( \cos n \theta \,g^c_n(x)  - \sin n \theta  \,g^s_n(x) )  \text{ d} s_{x}
\\ & = \cos n \theta \, \int_{\partial {D}}  (\nabla_{y} \, N_1 ) ({x} , y) \cdot \nu_{y}  \,g^c_n(x)    \text{ d} s_{x}  - \sin n \theta \,\int_{\partial {D}}  (\nabla_{y} \, N_1 ) ({x} , y) \cdot \nu_{y}  \,g^s_n(x)  \text{ d} s_{x} \,. \end{split} \]
Here, we have made the change of variables $\wh{x} = \mathbf{R} x$ and used  identity \eqref{eq:identity_Nfc_rotation}. Therefore, it follows that 
\begin{equation} \label{eq:lambda_formula_rotation} \Lambda_{\wh{1}}^{-1} \Lambda_{\wh{\gamma}}^{\wh{D}} [\wh{g}_n^c] (\mathbf{R} x )  = \cos n\theta \, \Lambda_1^{-1}\Lambda_{{\gamma}}^{{D}} [g_n^c] (x) - \sin n \theta \, \Lambda_1^{-1}\Lambda_{{\gamma}}^{{D}} [g_n^s] (x)  \,. \end{equation}
Hence, from Lemma \ref{lemma:gnc_rotation} and \eqref{eq:lambda_formula_rotation},  we get
\[ \begin{split} \wh{g}_n^c(\mathbf{R} x ) -  \Lambda_{\wh{1}}^{-1} \Lambda_{\wh{\gamma}}^{\wh{D}} [\wh{g}_n^c] (\mathbf{R} x ) & =  \cos n \theta \,g^c_n(x)  - \sin n \theta  \,g^s_n(x) -  \cos n \theta \, \Lambda_1^{-1}\Lambda_{{\gamma}}^{{D}} [g_n^c] (x) + \sin n \theta \, \Lambda_1^{-1}\Lambda_{{\gamma}}^{{D}} [g_n^s] (x)\\ & = \cos n \theta \, ( g^c_n(x) - \Lambda_1^{-1}\Lambda_{{\gamma}}^{{D}} [g_n^c] (x) ) - \sin n \theta  \, ( g^s_n(x) -  \Lambda_1^{-1}\Lambda_{{\gamma}}^{{D}} [g_n^s] (x)  ) \,.\end{split} \]
\[\begin{split} M^{cc}_{mn} (\wh{\gamma}, \wh{D}) & =    \int_{\partial D}  r_x^m \cos (m (\theta_x + \theta))\, (  \wh{g}_n^c(\mathbf{R} x) -   \Lambda_{\wh{1}}^{-1}\Lambda_{\wh{\gamma}}^{\wh{D}} [ \wh{g}_n^c](\mathbf{R} x) ) \text{ d} s_{x} 
\\ &=  \int_{\partial D} [ r^m_x \cos ( m \theta_x ) \cos ( m \theta )  - r^m_x \sin ( m \theta_x ) \sin ( m \theta ) ] (  \wh{g}_n^c(\mathbf{R} x) -   \Lambda_{\wh{1}}^{-1}\Lambda_{\wh{\gamma}}^{\wh{D}} [ \wh{g}_n^c](\mathbf{R} x) )  \text{ d} s_{x}
\\ &= \cos ( m \theta )  \cos ( n\theta) \,  \int_{\partial D} r^m_x \cos ( m \theta_x )  (  g_n^c(x) -   \Lambda_{1}^{-1}\Lambda_{\gamma}^{D} [g_n^c]( x) ) \text{ d} s_{x}
\\ & - \cos ( m \theta )  \sin ( n\theta ) \,  \int_{\partial D} r^m_x \cos ( m \theta_x )  (  g_n^s(x) -   \Lambda_{1}^{-1}\Lambda_{\gamma}^{D} [g_n^s]( x) ) \text{ d} s_{x} 
\\&-  \sin ( m \theta ) \cos ( n \theta )\int_{\partial D}   r^m_x \sin ( m \theta_x )   (  g_n^c(x) -   \Lambda_{1}^{-1}\Lambda_{\gamma}^{D} [g_n^c]( x) ) \text{ d} s_{x}
\\&+  \sin ( m \theta ) \sin ( n \theta )\int_{\partial D}   r^m_x \sin ( m \theta_x )   (  g_n^s(x) -   \Lambda_{1}^{-1}\Lambda_{\gamma}^{D} [g_n^s]( x) ) \text{ d} s_{x}
\\ & = \cos ( m \theta )  \cos ( n\theta)   M^{cc}_{mn}  - \cos ( m \theta )  \sin ( n\theta ) M^{cs}_{mn} \\ & -  \sin ( m \theta ) \cos ( n \theta ) M^{sc}_{mn} + \sin ( m \theta ) \sin ( n \theta ) M^{ss}_{mn} \,.
  \end{split}\]

Similar computations lead to the rotation formulas for the others CGPTs $\wh{M}^{cs}_{mn}, \wh{M}^{sc}_{mn}$ and $\wh{M}^{ss}_{mn}$. All these formulas can be written in a matrix form:
\[ { \begin{bmatrix} \wh{M}^{cc}_{mn} &  \wh{M}^{sc}_{mn} \\  \wh{M}^{cs}_{mn}  &  \wh{M}^{ss}_{mn}  \end{bmatrix} = \mathbf{R}(n \theta) \cdot \begin{bmatrix} {M^{cc}_{mn}} &  {M^{sc}_{mn}} \\  {M^{cs}_{mn}}  &  {M^{ss}_{mn}}  \end{bmatrix} \cdot \mathbf{R}(m \theta)^T  \, ,} \]
where $\mathbf{R}(\theta)$ is defined in \eqref{addmatrix}.

\subsection{Scaling formula}
Similarly to what we have done for translations and rotations we want to investigate how $M_{mn}$ changes with respect to a scaling of $D$. 

\medskip

Let $s > 0$ and assume that $z=0$. We denote $\wh{x} = s x$, $\wh{D}:= s D$  and $\wh{\gamma}(\wh{x}):= \gamma (x)$. We want to relate $M^{cc}_{mn} (\gamma , D)$ with $ M^{cc}_{mn} (\wh{\gamma} , \wh{D})$ given by

\begin{equation*}\label{eq:CGPTscaling} M^{cc}_{mn} (\wh{\gamma}, \wh{D}) = \wh{M}_{mn} =  \int_{\partial \wh{D}}  \text{Re}(\wh{x}^m)  (  \wh{g}_n^c(\wh{x}) -   \Lambda_{\wh{1}}^{-1}\Lambda_{\wh{\gamma}}^{\wh{D}} [ \wh{g}_n^c](\wh{x}) ) \text{ d} s_{\wh{x}} \,. \end{equation*}
By the change of variables $\wh{x} = sx$, we obtain
\begin{equation} \label{eq:CGPTscaling_2} M^{cc}_{mn} (\wh{\gamma} , \wh{D}) = \wh{M}_{mn} =  s^{m+1} \, \int_{\partial D}  \text{Re}(x^m)  (  \wh{g}_n^c(sx) -   \Lambda_{\wh{1}}^{-1}\Lambda_{\wh{\gamma}}^{\wh{D}} [ \wh{g}_n^c](sx) ) \text{ d} s_{x} . \end{equation}
\begin{lem}\label{lemma:gnc_scaling} We have
\begin{equation} \label{eq:formula_gnc_scaling} \wh{g}_n^c(s x) = s^{n -1} g_n^c (x) \,, \end{equation} 
and 
\begin{equation} \label{eq:formula_gns_scaling}  \wh{g}_n^s(s x) = s^{n -1} g_n^s (x) \,. \end{equation}
\end{lem}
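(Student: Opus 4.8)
The plan is to mimic the proofs of Lemma \ref{lemma:gnc_translation} and Lemma \ref{lemma:gnc_rotation}, using the scaling invariance of the Laplacian and of the conductivity equation in place of translation/rotation invariance. First I would introduce $\wh{u}_n^c$ as the solution to the exterior problem
\begin{equation*}
\begin{cases} \nabla_{\wh{x}} \cdot \wh{\gamma}(\wh{x}) \nabla_{\wh{x}} \wh{u}_n^c(\wh{x}) = 0 & \mbox{in } \RR^2 ,\\ \wh{u}_n^c(\wh{x}) - \Re(\wh{x}^n) = O(|\wh{x}|^{-1}) & \mbox{as } |\wh{x}| \to + \infty ,\end{cases}
\end{equation*}
so that $\wh{g}_n^c = \wh{\gamma}\, \partial \wh{u}_n^c / \partial \nu_{\wh{x}}$ on $\partial \wh{D}$ (equivalently $\partial \wh{u}_n^c / \partial \nu_{\wh{x}}|_+$). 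Then I set $v_n^c(x) := s^{-n}\wh{u}_n^c(sx)$ and check, via $\nabla_{\wh{x}} = s^{-1}\nabla_x$ and $\wh{\gamma}(sx) = \gamma(x)$, that $v_n^c$ solves $\nabla_x \cdot \gamma(x)\nabla_x v_n^c = 0$ in $\RR^2$ with the normalization $v_n^c(x) - \Re(x^n) = s^{-n}(\wh{u}_n^c(sx) - \Re((sx)^n)) = O(|x|^{-1})$ at infinity. By uniqueness for \eqref{eq:problem_Am1} (with $h = \Re(x^n)$), this forces $v_n^c = u_n^c$, i.e. $\wh{u}_n^c(sx) = s^n u_n^c(x)$.

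Next I would differentiate this identity. On $\partial D$, the outward unit normals of $D$ and of $\wh{D}=sD$ at corresponding points coincide, $\nu_{\wh{x}} = \nu_x$, and the chain rule gives $\nabla_{\wh{x}}\wh{u}_n^c(sx) = s^{-1}\nabla_x\big(\wh{u}_n^c(sx)\big) = s^{-1}\nabla_x\big(s^n u_n^c(x)\big) = s^{n-1}\nabla_x u_n^c(x)$. Taking the normal component and recalling $\gamma(x) = \wh{\gamma}(sx)$, I obtain
\[ \wh{g}_n^c(sx) = \wh{\gamma}(sx)\,\frac{\partial \wh{u}_n^c}{\partial \nu_{\wh{x}}}(sx) = s^{n-1}\,\gamma(x)\,\frac{\partial u_n^c}{\partial \nu_x}(x) = s^{n-1} g_n^c(x), \]
which is \eqref{eq:formula_gnc_scaling}. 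The identity \eqref{eq:formula_gns_scaling} for $\wh{g}_n^s$ follows by the identical argument starting from $h = \Im(x^n)$ instead of $\Re(x^n)$, noting $\Im((sx)^n) = s^n\Im(x^n)$, so no separate calculation is needed.

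I expect the only delicate point to be bookkeeping of the scaling weights: one must be careful that the normalization at infinity is $\Re(\wh{x}^n)$ (not $r_{\wh{x}}^n\cos(n\theta_{\wh{x}})$ scaled differently — these agree), that $v_n^c$ is defined with the correct power $s^{-n}$ so that its far-field matches $\Re(x^n)$ exactly, and that the factor $\wh{\gamma}(sx) = \gamma(x)$ is used consistently when passing from $\partial \wh{u}_n^c/\partial\nu|_+$ to $\wh{g}_n^c$. There is no genuine analytic obstacle here — the uniqueness statement needed is exactly the one underpinning Proposition \ref{prop:equiv_ident}, and the rest is the scaling covariance of the Laplace/divergence operator — so the proof is essentially a short computation in the same style as the preceding two lemmas. (As a sanity check, plugging \eqref{eq:formula_gnc_scaling} into \eqref{eq:CGPTscaling_2} will later yield $\wh{M}_{mn}^{cc} = s^{m+n} M_{mn}^{cc}$, consistent with the homogeneous case.)
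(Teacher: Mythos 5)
Your proposal is correct and follows essentially the same route as the paper: introduce $\wh{u}_n^c$, rescale to get a solution of the unscaled conductivity problem with far-field $\Re(x^n)$, invoke uniqueness to identify it with $u_n^c$ up to the factor $s^n$, and then differentiate using the chain rule and the invariance of the normal under dilation. The only cosmetic difference is that you absorb the factor $s^{-n}$ into the definition of $v_n^c$ at the outset, whereas the paper sets $v_n^c(x)=\wh{u}_n^c(sx)$ and normalizes afterwards.
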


\begin{proof} We show the first identity. The one for $\wh{g}_n^s$ can be proved in the same way. Let $\wh{u}^c_n$ be the solution to
\begin{equation} \label{eq:lemma_gnc_scaling}\begin{cases} \nabla_{\wh{x}} \cdot \wh{\gamma}(\wh{x}) \nabla_{\wh{x}} \wh{u}^c_n (\wh{x}) = 0 &\qquad \mbox{in } \RR^2 , \\  \wh{u}^c_n (\wh{x}) - r^{n}_{\wh{x}} \cos (n \theta_{\wh{x}}) = O(r_{\wh{x}}^{-1}) &  \qquad \mbox{as } |\wh{x}| \to + \infty ,\end{cases} \end{equation}
Then $\wh{g}_n^c:= \wh{\gamma} \dfrac{\partial \wh{u}^c_n}{\partial \nu_{\wh{x}}}$. By a change of variables in \eqref{eq:lemma_gnc_scaling} and by setting  $v^c_n(x):= \wh{u}^c_n  (s x)$, we obtain
\[ \label{eq:lemma_gnc_scaling_2} \begin{cases} \nabla_{{x}} \cdot {\gamma}({x}) \nabla_{{x}} v^c_n  (x) = 0 &\qquad \mbox{in } \RR^2 ,\\ s ( {v}^c_n ({x}) - s^{n}  r^{n}_{{x}} \cos (n \theta_{{x}}) )= O(r_{{x}}^{-1}) &  \qquad \mbox{as } |\wh{x}| \to + \infty  .\end{cases}
\]
Therefore, $s^n {v}^c_n ({x})$ solves the same problem as $u^c_n(x)$. By the uniqueness of a solution,  we get
\[ s^{-n} {v}^c_n ({x}) = u^c_n(x) .\]
So
\[  \nabla_x u^c_n(x) = s^{-n} \nabla_x {v}^c_n ({x})  = s^{-n+1} \nabla_{\wh{x}}  \wh{u}^c_n (\wh{x})\,. \]
Hence,
\[ g^c_n(x) = s^{-n+1}  {\wh{g}}^c_n(\wh{x}).\]
\end{proof}
 
We want to relate \eqref{eq:original} and \eqref{eq:CGPTscaling_2}. We refer to \eqref{eq:integral_representation_original_setting} for an integral representation of the operator $\Lambda_1^{-1}\Lambda_{{\gamma}}^{{D}}$ .

\bigskip

Now we proceed similarly for the operator that plays a role in the scaled problem: \[ \Lambda_{\wh{1}}^{-1}\Lambda_{\wh{\gamma}}^{\wh{D}}: H^{-1/2}_0(\partial \wh{D}) \arr H^{-1/2}_0(\partial \wh{D}) \]
\[ \wh{g}_n^c \longmapsto \frac{\partial  \wh{h}}{\partial \nu_{\wh{x}} } \biggr |_{-} ,\]
where $ \wh{h}$ is the solution to the boundary value problem
\begin{equation} \label{eq:bvphat_scaling} \begin{cases} \Laplacian \wh{h} = 0 &\quad \mbox{in } \wh{D} ,\\ \nabla_{\wh{x}} \wh{h} \cdot {\nu_{\wh{x}}} =  \wh{g}_n^c  &\quad \mbox{on } \partial \wh{D} \,.\end{cases}\end{equation}

Let $\wh{y} = sy \in \wh{D}$ and consider the corresponding Neumann function $\wh{N}_1(\wh{x},\wh{y})$, that is,  the solution to
\[\label{eq:N_bvphat_scaling} \begin{cases} \Laplacian_{\wh{x}} \wh{N}_1 (\wh{x},\wh{y}) = - \delta_{\wh{y}}(\wh{x}) , &  \qquad \wh{x}\in \wh{D} , \\ \nabla_{\wh{x}} \wh{N}_1(\wh{x},\wh{y}) \cdot \nu_{\wh{x}} = -\frac{1}{|\partial \wh{D}|} ,  & \qquad \wh{x} \in \partial \wh{D} , \\ \displaystyle \int_{\partial \wh{D}} \wh{N}_1(\wh{x},\wh{y}) \dds_{\wh{x}} = 0 . \end{cases} \]
Then, 
\[\label{eq:N_bvphat_scaling_2} \begin{cases} \frac{1}{s^2} \,\Laplacian_{{x}} \wh{N}_1 (s x,s y) = - \delta_{sy}(sx)  = - \delta_{0} (s ( x - y )) = - \frac{1}{s^2} \delta_y (x),  &  \qquad {x}\in {D} , \\ \frac{1}{s} \,\nabla_{{x}} \wh{N}_1( s x ,s y) \cdot \nu_{{x}} = -\frac{1}{s |\partial {D}|} , & \qquad {x} \in \partial {D}, \\ \displaystyle \int_{\partial {D}} \wh{N}_1(s {x},\wh{y}) \dds_{{x}} = 0 , \end{cases}\]
which shows that
\[ \label{eq:N_bvphat_scaling_3} \begin{cases} \Laplacian_{{x}} \wh{N}_1 (s x,s y) = -  \delta_y (x), &  \qquad {x}\in {D}, \\ \nabla_{{x}} \wh{N}_1( s x ,s y) \cdot \nu_{{x}} =  -\frac{1}{|\partial {D}|},  & \qquad {x} \in \partial {D}, \\ \displaystyle \int_{\partial {D}} \wh{N}_1(s {x},s {y}) \dds_{{x}} = 0 .\end{cases} \]
One can easily see that  $\wh{N}_1 ( s x , s y)$ and ${N}_1 (x ,  y)$ satisfy exactly the same boundary value problem \eqref{eq:N_bvp_transl}. Therefore, 
\begin{equation} \label{eq:identity_Nfc_scaling} \wh{N}_1 (s x , s y) = {N}_1 (x,y) \, .\end{equation}
The solution $ \wh{h}$ to  \eqref{eq:bvphat_scaling} can be represented by means of the Neumann function $\wh{N}_1$:
\[ \wh{h}(\wh{y}) = \int_{\partial \wh{D}} \wh{N}_1 (\wh{x} , \wh{y} )  \,\wh{g}_n^c(\wh{x}) \text{ d} s_{\wh{x}} \,, \]
and so, for $y \in \partial D$, we have
\[ \begin{split} \Lambda_{\wh{1}}^{-1} \Lambda_{\wh{\gamma}}^{\wh{D}} [\wh{g}] (\wh{y} )  & = \nabla_{\wh{y}}\,\wh{h}(\wh{y}) \cdot \nu_{\wh{y}} 
\\ &= \int_{\partial \wh{D}}  (\nabla_{\wh{y}} \, \wh{N}_1) (\wh{x} , \wh{y} ) \cdot \nu_{\wh{y}} \,\wh{g}_n^c(\wh{x}) \text{ d} s_{\wh{x}} 
\\ & = \int_{\partial {D}}  (\nabla_{\wh{y}} \, \wh{N}_1) ( s x , \wh{y}) \cdot \nu_{{y}} \, s^{n-1} \,{g}_n^c({x}) \, s \text{ d} s_{{x}} \\ & = s^{n-1} \,  \int_{\partial {D}}  (\nabla_{y} \,  \wh{N}_1 ( s x , s \, \cdot \,) ) (s {x} , y) \cdot \nu_{{y}} \,{g}_n^c({x}) \text{ d} s_{{x}} 
 \\ & = s^{n-1}\,\int_{\partial {D}}  (\nabla_{y} \, N_1 ) ({x} , y) \cdot \nu_{{y}} \,{g}_n^c({x}) \text{ d} s_{{x}} . \end{split} \]
Here, we have made the change of variables $\wh{x} = s x$ and used  identity \eqref{eq:identity_Nfc_scaling}. Therefore, it follows that 
\[ \Lambda_{\wh{1}}^{-1} \Lambda_{\wh{\gamma}}^{\wh{D}} [\wh{g}] (\wh{y} )  = s^{n-1} \, \Lambda_1^{-1}\Lambda_{{\gamma}}^{{D}} [g] (y) . \]
Then
\[  \begin{split} M^{cc}_{mn} (\wh{\gamma} , \wh{D})  & =  s^{m+1} \,  \int_{\partial D}  \text{Re}(x^m)  (  s^{n-1} {g_n^c}(x) -   s^{n - 1} \Lambda_1^{-1}\Lambda_{\gamma}^{D} [ g_n^c](x) ) \text{ d} s_{x}
\\ & =  s^{m+n} \,  \int_{\partial D}  \text{Re}(x^m)  ( {g_n^c}(x) -  \Lambda_1^{-1}\Lambda_{\gamma}^{D} [ g_n^c](x) ) \text{ d} s_{x}
\\ & =  s^{m+n} \,  M^{cc}_{mn} (\gamma , D) .\end{split}\]
Hence, we obtain the following scaling formula
\[ { M^{cc}_{mn} (\wh{\gamma} , \wh{D})  = s^{m+n} \,  M^{cc}_{mn} (\gamma , D)  \,.} \]
Analogously, we get
\[ M^{cs}_{mn} (\wh{\gamma} , \wh{D})  = s^{m+n} \,  M^{cs}_{mn} (\gamma , D) , \] \[ M^{sc}_{mn} (\wh{\gamma} , \wh{D})  = s^{m+n} \,  M^{sc}_{mn} (\gamma , D),\] \[ M^{ss}_{mn} (\wh{\gamma} , \wh{D})  = s^{m+n} \,  M^{ss}_{mn} (\gamma , D) .\]

\bigskip

 In order to simplify the notation, for any pair of indices $m, n$, we denote by $\mathbf{M}_{mn}:=\begin{bmatrix} {M^{cc}_{mn}} &  {M^{sc}_{mn}} \\  {M^{cs}_{mn}}  &  {M^{ss}_{mn}}  \end{bmatrix}$ and $\wh{\mathbf{M}}_{mn}:= \begin{bmatrix} \wh{M}^{cc}_{mn} &  \wh{M}^{sc}_{mn} \\  \wh{M}^{cs}_{mn}  &  \wh{M}^{ss}_{mn}  \end{bmatrix}$. We introduce also the following notation:
\begin{itemize}
\item $T_z D = \{ x + z , \text{ for } x \in D \}$, $(T_z \star \gamma)(x) = \gamma(x-z)$, for $z \in \RR^2$;
\item[]
\item $R_\theta D = \{ e^{i \theta} x , \text{ for } x \in D \}$, $(R_\theta \star \gamma)(x) = \gamma(e^{-i \theta} x)$, for $\theta \in [0, 2 \pi )$;
\item[]
\item $s D = \{ s x , \text{ for } x \in D \}$, $(s \star \gamma)(x) = \gamma(s^{-1} x)$, for $s > 0$, 
\end{itemize}
where $D$ is an open set and $\gamma$ is a conductivity distribution in the plane.

\medskip

We summarize the formulas that we obtained so far in the following theorem.

\begin{prop} \label{prop:cgpt_transf_formulas} For any pair $m,n$ of indices, $m,n = 1, 2 , \ldots$, the following transformation formulas hold true:
\[ \begin{split}  {\mathbf{M}_{mn}} (T_z \star \gamma, T_z D) &= \sum_{k=1}^m \sum_{r=1}^n r_z^{m-k}  r_z^{n-r}  \left ( \begin{matrix} m \\ k \end{matrix} \right ) \left ( \begin{matrix} n\\ r \end{matrix} \right ) \mathbf{R}((n-r)\theta_z)   {\mathbf{M}_{kr}}(\gamma, D)  \mathbf{R}((m-k)\theta_z)^T ,  \\
 {\mathbf{M}_{mn}} (R_\theta \star \gamma, R_\theta D)  & = \mathbf{R}(n \theta)  {\mathbf{M}_{mn}}(\gamma, D)  \mathbf{R}(m \theta)^T , \end{split}\]
 and 

\medskip
 \(
  {\mathbf{M}_{mn}}(s \star \gamma, s D)  = s^{m+n} \,  {\mathbf{M}_{mn}}(\gamma, D) .  \)
\end{prop}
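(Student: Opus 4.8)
The plan is to assemble the three matrix identities directly from the scalar transformation rules derived in the three preceding subsections; nothing genuinely new has to be proved, only organized. First I would recall the two structural facts behind each scalar computation: (i) the behaviour of the interior flux data $g_n^c$ and $g_n^s$ under the symmetry in question, given by Lemma~\ref{lemma:gnc_translation} for $T_z$, Lemma~\ref{lemma:gnc_rotation} for $R_\theta$, and Lemma~\ref{lemma:gnc_scaling} for the dilation $s$; and (ii) the covariance of the operator $\Lambda_1^{-1}\Lambda_\gamma^D$, which follows from the uniqueness-based invariance identities \eqref{eq:identity_Nfc_transl}, \eqref{eq:identity_Nfc_rotation}, \eqref{eq:identity_Nfc_scaling} for the Neumann function $N_1$ under the changes of variables $\wh x = x+z$, $\wh x = \mathbf R x$, and $\wh x = s x$. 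Combining (i) and (ii), together with the binomial expansions of $\mathrm{Re}((x+z)^m)$ and $\mathrm{Im}((x+z)^m)$ in the translation case and the addition formulas for $\cos$ and $\sin$ in the rotation case, yields the four scalar transformation formulas for $\wh M^{cc}_{mn}$, $\wh M^{cs}_{mn}$, $\wh M^{sc}_{mn}$, $\wh M^{ss}_{mn}$ exactly as displayed in the body.

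Next I would observe that, for each of the three symmetries, the resulting scalar identities are bilinear in sines and cosines, with one angle attached to the index $m$ and the other, independently, to the index $n$. Reading off the $2\times2$ array $\mathbf M_{mn}=\begin{bmatrix} M^{cc}_{mn} & M^{sc}_{mn}\\ M^{cs}_{mn} & M^{ss}_{mn}\end{bmatrix}$, this is precisely the entrywise expansion of the congruence $\mathbf R(n\theta)\,\mathbf M_{mn}\,\mathbf R(m\theta)^T$ in the rotation case; of $\sum_{k=1}^m\sum_{r=1}^n \binom m k\binom n r\, r_z^{m-k} r_z^{n-r}\,\mathbf R((n-r)\theta_z)\,\mathbf M_{kr}\,\mathbf R((m-k)\theta_z)^T$ in the translation case; and of the scalar multiple $s^{m+n}$ (i.e.\ the trivial congruence by $\mathbf R(0)=I$) in the dilation case. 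Performing the identification amounts to comparing four $2\times2$ entries in each case; the point demanding care is that $\mathbf M_{mn}$ stores $M^{sc}$ in the $(1,2)$ slot and $M^{cs}$ in the $(2,1)$ slot, so that the rotation carrying the index $n$ (the one coming from $g_n^c, g_n^s$) must multiply on the left, while the rotation carrying the index $m$ (the one coming from $\mathrm{Re}(z^m), \mathrm{Im}(z^m)$) must multiply on the right as a transpose, in accordance with the sign convention of \eqref{addmatrix}.

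The proof has no genuine obstacle beyond this bookkeeping: one must check that every sign produced by a $\sin$ term in Lemmas~\ref{lemma:gnc_translation} and \ref{lemma:gnc_rotation} and in the $\cos/\sin$ addition formulas matches the off-diagonal $-\sin\theta$ entry of $\mathbf R(\theta)$ after the transpose on the right factor. For the scaling formula there is, in addition, the volume-type factor $s^{m+1}$ coming from $\mathrm{Re}(\wh x^m)\,\mathrm{d}s_{\wh x}$ in \eqref{eq:CGPTscaling_2} and the factor $s^{n-1}$ from Lemma~\ref{lemma:gnc_scaling} (with the same $s^{n-1}$ for $\Lambda_1^{-1}\Lambda_\gamma^D$), whose product $s^{m+n}$ is index-diagonal and hence consistent with the $\theta=0$ specialization of the other two formulas. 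Collecting the four entrywise identities for each of the three symmetries yields the stated matrix formulas.
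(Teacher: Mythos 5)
Your proposal is correct and follows essentially the same route as the paper: the proposition is proved there simply by collecting the scalar formulas derived in the three preceding subsections, each of which rests on exactly the two ingredients you identify (the transformation lemmas for $g_n^c,g_n^s$ and the uniqueness-based invariance of the Neumann function $N_1$ under the corresponding change of variables), followed by the binomial/addition-formula expansion and the matrix bookkeeping with $\mathbf{R}(n\theta)$ on the left and $\mathbf{R}(m\theta)^T$ on the right. Your accounting of the scaling factors $s^{m+1}\cdot s^{n-1}=s^{m+n}$ also matches the paper's computation.
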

\subsection{Complex CGPTs}

As observed in \cite{Am3}, it is convenient to consider complex combinations of CGPTs. For a pair of indices $m, n = 1 , 2 , ...$ ,  we introduce the following quantities
\begin{equation} \label{eq:CCGPTs} \begin{matrix} \mathbf{N}_{mn}^{(1)} (\gamma , D) = (M_{mn}^{cc} - M_{mn}^{ss}) + i  (M_{mn}^{cs} + M_{mn}^{sc}),  \\ \mathbf{N}_{mn}^{(2)} (\gamma , D) = (M_{mn}^{cc} + M_{mn}^{ss}) + i  (M_{mn}^{cs} - M_{mn}^{sc}) . \end{matrix}  \end{equation}

Using relations of Proposition \ref{prop:cgpt_transf_formulas} it is straightforward to prove similar rules than those derived in \cite{Am2} for the complex CGPTs \eqref{eq:CCGPTs}. 
\begin{prop} \label{prop:N_cgpts_properties} For all integers $m , n$, and geometric parameters $\theta , s$ and $z$, the following holds:
\begin{equation} \label{eq:N_rot}  \mathbf{N}^{(1)}_{mn} (R_\theta \star \gamma , R_\theta D) = e^{i (m + n)} \mathbf{N}^{(1)}_{mn} (\gamma, D)  , \qquad  \mathbf{N}^{(2)}_{mn} (R_\theta \star \gamma , R_\theta D) = e^{i (n -  m)} \mathbf{N}^{(2)}_{mn} (\gamma, D) , \end{equation}
\begin{equation} \label{eq:N_scal}  \mathbf{N}^{(1)}_{mn}  (s \star \gamma, s D)= s^{m+n} \mathbf{N}^{(1)}_{mn} (\gamma, D)  , \qquad  \mathbf{N}^{(2)}_{mn} (s \star \gamma, s D) = s^{m+n} \mathbf{N}^{(2)}_{mn} (\gamma, D) ,\end{equation}
and 
\begin{equation} \label{eq:N_transl}  \mathbf{N}^{(1)}_{mn} (T_z \star \gamma, T_z D) = \sum_{l = 1}^m   \sum_{k= 1}^n \mathbf{C}^z_{ml}  \mathbf{N}^{(1)}_{lk} (\gamma, D) \mathbf{C}^z_{nk}  , \qquad  \mathbf{N}^{(2)}_{mn} (T_z \star \gamma , T_z D) = \sum_{l = 1}^m   \sum_{k= 1}^n \overline{\mathbf{C}^z_{ml}} \mathbf{N}^{(2)}_{lk} (\gamma, D) \mathbf{C}^z_{nk}   , \end{equation}
where $\mathbf{C}^z$ is a lower triangular matrix with the $m,n$-th entry given by
\[ \mathbf{C}^z_{mn} = \binom{m}{n} \, z^{m-n} .\]
\end{prop}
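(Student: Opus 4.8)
The plan is to reduce the whole statement to the matrix transformation formulas of Proposition~\ref{prop:cgpt_transf_formulas} by encoding each $2\times 2$ matrix as a single complex-linear map. The starting point is the elementary fact that a real matrix $\mathbf{M}=\begin{bmatrix} a & b \\ c & d\end{bmatrix}$, regarded as an $\RR$-linear endomorphism of $\CC\cong\RR^2$, has a unique representation $w\mapsto\alpha w+\beta\bar w$ with $\alpha=\tfrac{1}{2}\big((a+d)+i(c-b)\big)$ and $\beta=\tfrac{1}{2}\big((a-d)+i(c+b)\big)$. Applying this to $\mathbf{M}_{mn}=\begin{bmatrix} M^{cc}_{mn} & M^{sc}_{mn} \\ M^{cs}_{mn} & M^{ss}_{mn}\end{bmatrix}$ and comparing with \eqref{eq:CCGPTs}, one reads off exactly $\mathbf{N}^{(2)}_{mn}(\gamma,D)=2\alpha$ and $\mathbf{N}^{(1)}_{mn}(\gamma,D)=2\beta$; that is, $\mathbf{N}^{(2)}_{mn}$ is (twice) the holomorphic part and $\mathbf{N}^{(1)}_{mn}$ the antiholomorphic part of $\mathbf{M}_{mn}$. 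I would also record the trivial dictionary that $r\,\mathbf{R}(\phi)$ corresponds to multiplication by $re^{i\phi}$ and $r\,\mathbf{R}(\phi)^T=r\,\mathbf{R}(-\phi)$ to multiplication by $re^{-i\phi}$.

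The only computation needed is the composition rule for this encoding: if $\mathbf{M}'=A\,\mathbf{M}\,B$ where $A$ and $B$ are the matrices of multiplication by complex numbers $a$ and $b$, then $\mathbf{M}'(w)=a\big(\alpha(bw)+\beta\,\overline{bw}\big)=ab\,\alpha\,w+a\bar b\,\beta\,\bar w$, so $\alpha\mapsto ab\,\alpha$ and $\beta\mapsto a\bar b\,\beta$. Everything else is substitution into Proposition~\ref{prop:cgpt_transf_formulas}. For scaling, $\mathbf{M}_{mn}$ is multiplied by the scalar $s^{m+n}$, hence so are $\alpha$ and $\beta$, which gives \eqref{eq:N_scal}. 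For rotation, $\mathbf{M}_{mn}(R_\theta\star\gamma,R_\theta D)=\mathbf{R}(n\theta)\,\mathbf{M}_{mn}(\gamma,D)\,\mathbf{R}(m\theta)^T$ feeds $a=e^{in\theta}$, $b=e^{-im\theta}$ into the composition rule and yields $\alpha\mapsto e^{i(n-m)\theta}\alpha$ and $\beta\mapsto e^{i(m+n)\theta}\beta$, i.e. \eqref{eq:N_rot}.

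For the translation formula I would apply the composition rule termwise to the double sum in Proposition~\ref{prop:cgpt_transf_formulas}, writing $\alpha_{kr}$ and $\beta_{kr}$ for the holomorphic and antiholomorphic parts of $\mathbf{M}_{kr}(\gamma,D)$: in its $(k,r)$-term the left factor $r_z^{n-r}\mathbf{R}((n-r)\theta_z)$ is multiplication by $z^{n-r}$ and the right factor $r_z^{m-k}\mathbf{R}((m-k)\theta_z)^T$ is multiplication by $\bar z^{m-k}$, so that term contributes $\binom{m}{k}\binom{n}{r}\,z^{n-r}\bar z^{m-k}\,\alpha_{kr}$ to the holomorphic part of $\mathbf{M}_{mn}(T_z\star\gamma,T_zD)$ and $\binom{m}{k}\binom{n}{r}\,z^{n-r}z^{m-k}\,\beta_{kr}$ to its antiholomorphic part. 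Since $\binom{m}{k}\bar z^{m-k}=\overline{\mathbf{C}^z_{mk}}$, $\binom{m}{k}z^{m-k}=\mathbf{C}^z_{mk}$ and $\binom{n}{r}z^{n-r}=\mathbf{C}^z_{nr}$, multiplying by $2$ and relabeling the summation indices turns these two identities into precisely the two equations in \eqref{eq:N_transl}.

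I expect the only real difficulty to be bookkeeping: one must keep straight that the right multiplication by $B=\mathbf{R}(\phi)^T$ contributes $e^{-i\phi}$ and that the complex conjugation forced by the $\beta\bar w$ term makes $\bar b$ (rather than $b$) appear in the antiholomorphic part; a sign error here would interchange $\mathbf{N}^{(1)}$ and $\mathbf{N}^{(2)}$ or conjugate the wrong copy of $\mathbf{C}^z$ in \eqref{eq:N_transl}. Once the complex encoding and the one-line composition rule are in place, all three families of identities follow without further analysis. (For the rotation statement to be literally correct, the exponents in \eqref{eq:N_rot} should be read as $e^{i(m+n)\theta}$ and $e^{i(n-m)\theta}$.)
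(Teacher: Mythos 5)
Your proof is correct and follows exactly the route the paper intends: the paper itself gives no details, merely remarking that Proposition~\ref{prop:N_cgpts_properties} follows ``straightforwardly'' from the matrix transformation formulas of Proposition~\ref{prop:cgpt_transf_formulas}, and your complex encoding $\mathbf{M}_{mn}\leftrightarrow(\alpha,\beta)$ with $\mathbf{N}^{(2)}_{mn}=2\alpha$, $\mathbf{N}^{(1)}_{mn}=2\beta$ together with the composition rule $\alpha\mapsto ab\,\alpha$, $\beta\mapsto a\bar b\,\beta$ is precisely the standard way to carry out that deduction. You are also right that \eqref{eq:N_rot} contains a typo and should read $e^{i(m+n)\theta}$ and $e^{i(n-m)\theta}$.
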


We define the complex CGPT matrices by
\[ \mathbf{N}^{(1)}: = ( \mathbf{N}^{(1)}_{mn} )_{m,n}, \qquad \mathbf{N}^{(2)}: = ( \mathbf{N}^{(2)}_{mn} )_{m,n} .  \]
Setting $w = s e^{i \theta}$ we introduce the diagonal matrix $\mathbf{G}^w$ with $m$-th diagonal entry given by $s^m e^{i m \theta}$.

Applying one after the other the properties of Proposition \ref{prop:N_cgpts_properties} we immediately get the following relations:
\begin{equation}\label{eq:relation_1}
\mathbf{N}^{(1)} (T_z \star (s \star (R_\theta \star \gamma)), T_z s R_\theta D) = \mathbf{C}^z \mathbf{G}^w \mathbf{N}^{(1)}(\gamma, D) \mathbf{G}^w (\mathbf{C}^z)^T\, ,
\end{equation}
\begin{equation} \label{eq:relation_2}
\mathbf{N}^{(2)} (T_z \star (s \star (R_\theta \star \sigma)) , T_z s R_\theta D) = \overline{ \mathbf{C}^z \mathbf{G}^w} \mathbf{N}^{(2)} (\gamma, D) \mathbf{G}^w (\mathbf{C}^z)^T \,.
\end{equation}
Relations \eqref{eq:relation_1} and \eqref{eq:relation_2} still hold for the truncated CGPTs of finite order, due to the triangular shape of the matrix $\mathbf{C}^z$.

\bigskip

We call a dictionary $\mathcal{D}$ a collection of pairs $(\sigma, B)$, where $B$ is a standard shape centered at the origin, with characteristic size of order $1$, and $\sigma$ is a conductivity distribution such that $\text{supp}(\sigma -1) = \overline{B}$.  

We assume that  a reference dictionary $\mathcal{D}$ is initially given. Furthermore, suppose to consider a pair $(\gamma, D)$, which is unknow, that is obtained from an element $(\sigma, B) \in \mathcal{D}$ by applying some unknown rotation $\theta$, scaling $s$ and translation $z$, i.e., $D = T_z s R_\theta \,B$ and $\gamma = T_z \star (s \star (R_\theta \star \sigma))$.

\subsection{Conductivity descriptors}

If $D = T_z s R_\theta \,B$ and $\gamma = T_z \star (s \star (R_\theta \star \sigma))$  then the following identities hold true:
\begin{equation}\mathbf{N}_{11}^{(1)} (\gamma, D) = w^2 \mathbf{N}_{11}^{(1)} (\sigma, B), \end{equation}
\begin{equation}\mathbf{N}_{12}^{(1)} (\gamma , D) = 2 \mathbf{N}_{11}^{(1)}(\gamma , D) z +  w^3 \mathbf{N}_{12}^{(1)} (\sigma,B), \end{equation}
\begin{equation}\label{eq:identity_3} \mathbf{N}_{11}^{(2)} (\gamma , D) =s^2 \mathbf{N}_{11}^{(2)} (\sigma, B), \end{equation}
\begin{equation}\label{eq:identity_4} \mathbf{N}_{12}^{(2)} (\gamma , D) = 2 \mathbf{N}_{11}^{(2)}(\gamma , D) z +  s^2 w \mathbf{N}_{12}^{(2)} (\sigma,B), \end{equation}
where $w = s e^{i \theta}$.

\medskip

From identities \eqref{eq:identity_3} and \eqref{eq:identity_4} we obtain the relation:
\begin{equation} \frac{\mathbf{N}_{12}^{(2)} (\gamma, D)}{2 \mathbf{N}_{11}^{(2)} (\gamma, D)} = z + s e^{i \theta} \frac{\mathbf{N}_{12}^{(2)} (\sigma, B)}{2 \mathbf{N}_{11}^{(2)} (\sigma, B)} \,.\end{equation}

\medskip

Following \cite{Am3}, let $u = \frac{\mathbf{N}_{12}^{(2)} (\gamma, D)}{2 \mathbf{N}_{11}^{(2)} (\gamma, D)}$.
 We define the following quantities
\begin{equation} \mathcal{J}^{(1)} (\gamma, D) = \mathbf{N}^{(1)} ( T_{-u} \star \gamma, T_{-u} D ) = \mathbf{C}^{-u} \mathbf{N}^{(1)} (\gamma, D) ( \mathbf{C}^{-u})^T , \end{equation}
\begin{equation} \mathcal{J}^{(2)} (\gamma,D) = \mathbf{N}^{(2)} (T_{-u} \star \gamma , T_{-u} D ) = \overline{\mathbf{C}^{-u}} \mathbf{N}^{(2)} (\gamma, D) ( \mathbf{C}^{-u})^T , \end{equation}
where the matrix $\mathbf{C}^{-u}$ has been previously defined in Proposition \ref{prop:N_cgpts_properties}. These quantities are translation invariant.

\medskip

From $\mathcal{J}^{(1)}(\gamma,D) = (\mathcal{J}_{mn}^{(1)}(\gamma,D) )_{m,n}$, $\mathcal{J}^{(2)} (\gamma,D)  = (\mathcal{J}_{mn}^{(2)} (\gamma,D) )_{m,n}$, for each pair of indices $m,n$, we define  the scaling invariant quantities:
\begin{equation} \mathcal{S}^{(1)}_{mn}(\gamma,D)  = \frac{ \mathcal{J}_{mn}^{(1)}(\gamma,D)}{(\mathcal{J}_{mm}^{(2)}(\gamma,D)  \mathcal{J}_{nn}^{(2)}(\gamma,D) )^{1/2}} , \qquad \mathcal{S}^{(2)}_{mn} (\gamma, D) = \frac{ \mathcal{J}_{mn}^{(2)}(\gamma,D) }{(\mathcal{J}_{mm}^{(2)}(\gamma,D)  \mathcal{J}_{nn}^{(2)}(\gamma,D) )^{1/2}} . \end{equation}

Finally, we introduce the CGPT-based shape descriptors $\mathcal{I}^{(1)} = ( \mathcal{I}^{(1)}_{mn})_{m,n}$ and $\mathcal{I}^{(2)} = ( \mathcal{I}^{(2)}_{mn})_{m,n}$:

\[\mathcal{I}^{(1)}_{mn} = | \mathcal{S}^{(1)}_{mn} (\gamma,D)  | , \qquad \mathcal{I}^{(2)}_{mn} = | \mathcal{S}^{(2)}_{mn} (\gamma,D)  | , \]
where $| \, \cdot \, |$ denotes the modulus of a complex number. It is clear, by construction, that $\mathcal{I}^{(1)}$ and $\mathcal{I}^{(2)}$ are invariant under translation, rotation, and scaling.

\bigskip

The matching algorithm we refer to is rather simple, see Algorithm \ref{matching_algorithm}. This approach has been presented previously by Habib Ammari et al. in \cite{Am2}, where  shape descriptors have been exploited for dealing with homogeneous conductivities.

\bigskip

\begin{algorithm}[H]
\Input{the first $k$-th order shape descriptors $\mathcal{I}^{(1)}(D)$, $\mathcal{I}^{(2)}(D)$ of an unknown shape $D$.}

\nl \For{$B_n \in \mathcal{D}$}{
\nl  $e_n \leftarrow \left ( \| \mathcal{I}^{(1)}(B_n) - \mathcal{I}^{(1)}(D) \|_F^2 + \| \mathcal{I}^{(2)}(B_n) - \mathcal{I}^{(2)}(D) \|_F^2 \right )^{1/2}$ \;
\nl $n \leftarrow n +1$\;
 }
\Output{the true dictionary element $n^* \leftarrow \text{argmin}_n e_n$.}
 \caption{Shape identification based on transform invariant descriptors}
\label{matching_algorithm}
\end{algorithm}
$\| \cdot \|_F$ denotes the Frobenius norm of matrices.
\begin{rem} It is easy to see that all the radially symmetric conductivities possess the same conductivity descriptors $\mathcal{I}^{(1)}$ and $\mathcal{I}^{(2)}$. This is a consequence of the following identities:
\[\begin{split} M_{mn}^{cs} &= M_{mn}^{sc} = 0\quad \text{ for all } m,n,\\
 M_{mn}^{cc} &= M_{mn}^{ss}  = 0\quad \text{ if } m \ne n ,\\
 M_{mm}^{cc} &= M_{mm}^{ss}  \quad \text{ if } m = n . \end{split}\]
\end{rem}

\section{Numerical results}  \label{sec3}
In this section, we show some proof-of-concept numerical simulations about the dictionary-matching approach.
Henceforth, we will restrict ourselves to piecewise constant distributions only.

\subsection{Setting} \label{subsec-setting}
 Let $\mathcal{D}$ be the dictionary containing 10 standard conductivity distributions, as illustrated in \figref{smalldico}. Each one of the 5 shapes in the row a is equipped with homogeneous conductivity having parameter $k = 2$ (Triangle, Ellipse, Bean, Shield and Triangular Shield) whereas each coated shape in the row b is equipped with an inhomogeneous conductivity distribution having value $k_1 = 2$ in the outer coating and having value $k_2 = 4$ in the inner coating. All the shapes have the same characteristic size, which is of order one.

\begin{figure}[H]
	\centering
\begin{tabular}{cccccc}
& 1 & 2  & 3 & 4 & 5 \\
& & & & & \\
a \quad &  \raisebox{7ex - \height}{\includegraphics[ scale=0.23]{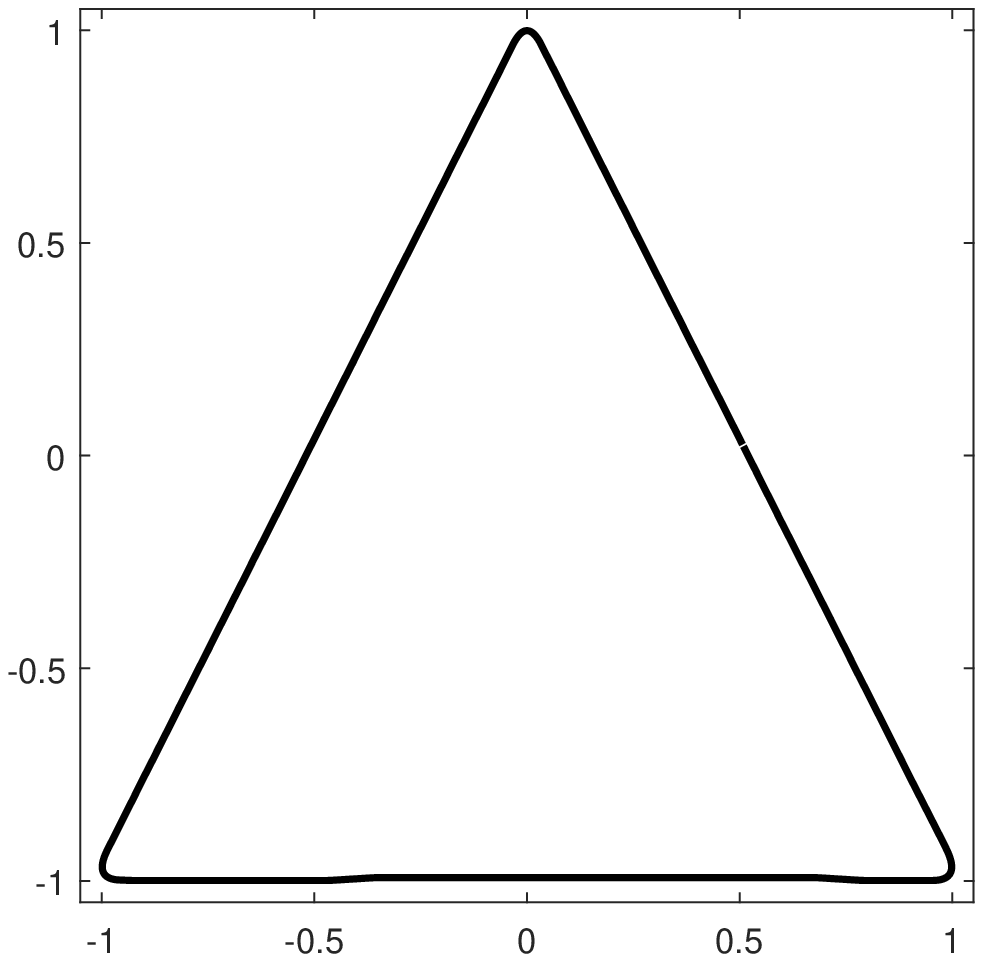}}   &
   \raisebox{7ex - \height}{\includegraphics[ scale=0.23]{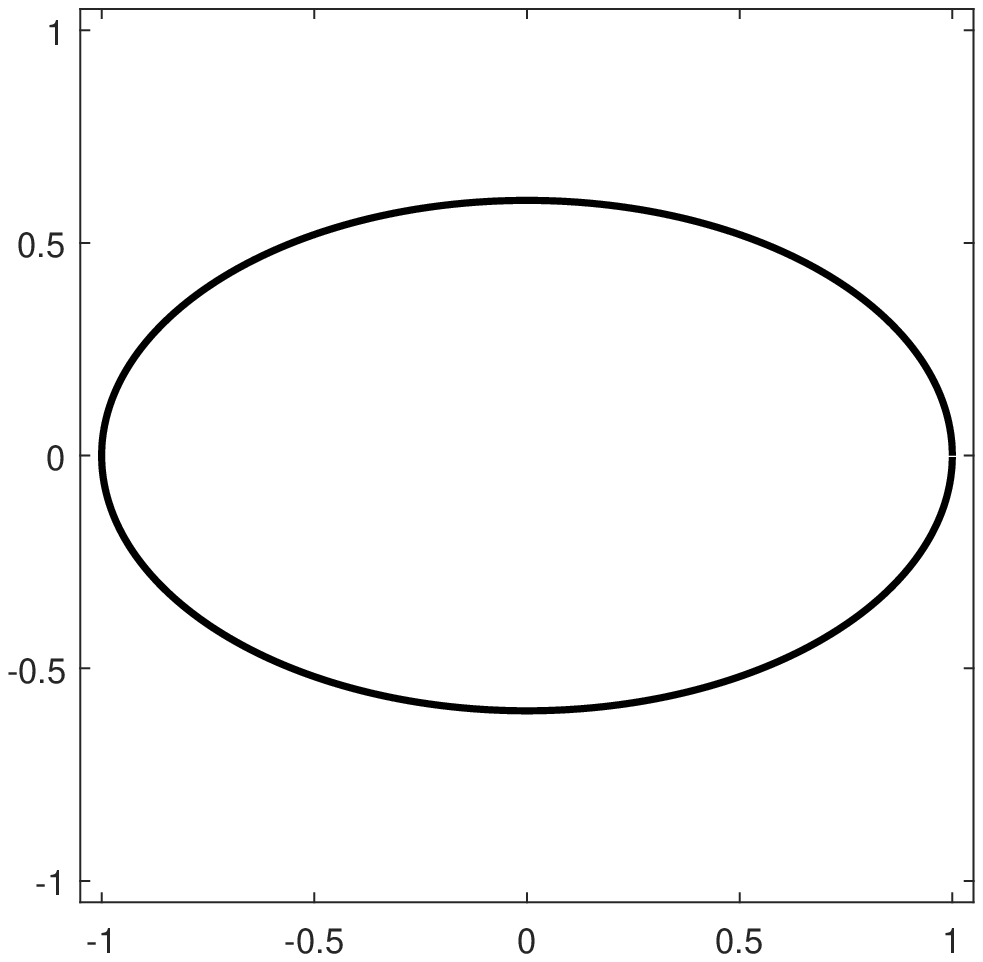}} & 
   \raisebox{7ex - \height}{\includegraphics[ scale=0.23]{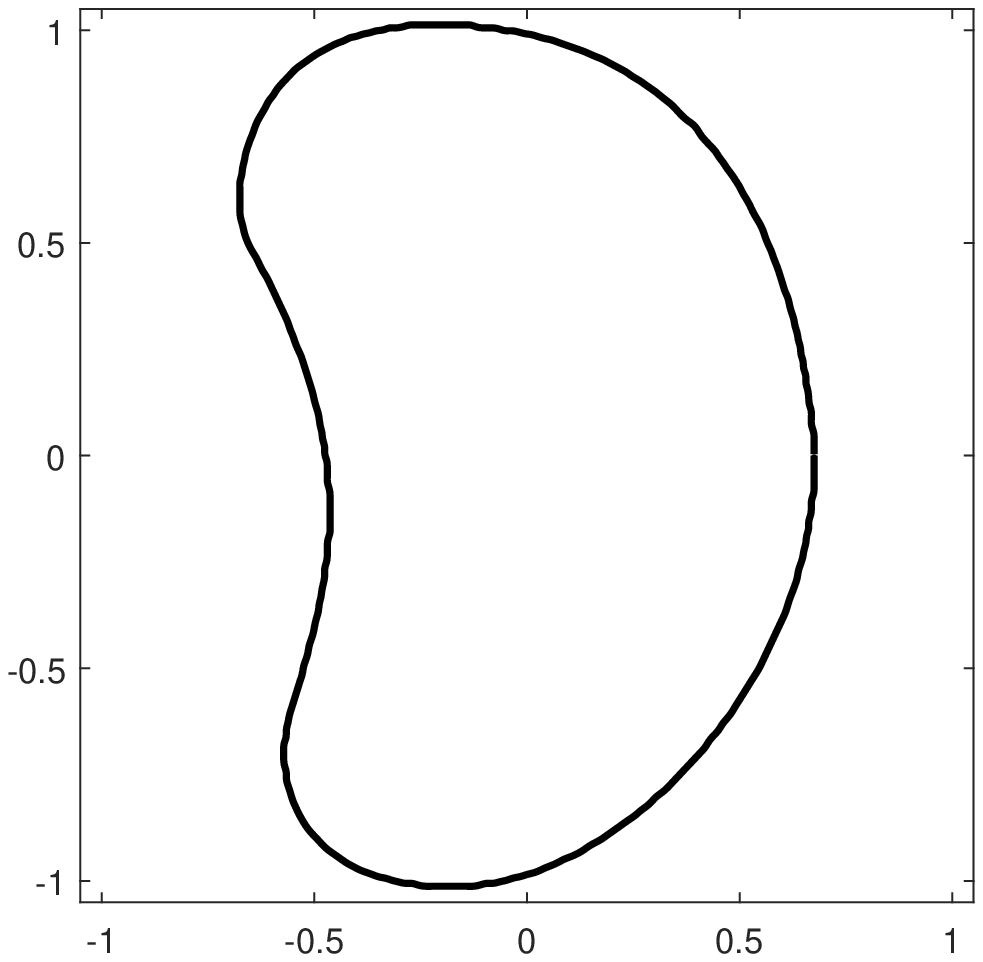}} &
   \raisebox{7ex - \height}{\includegraphics[ scale=0.23]{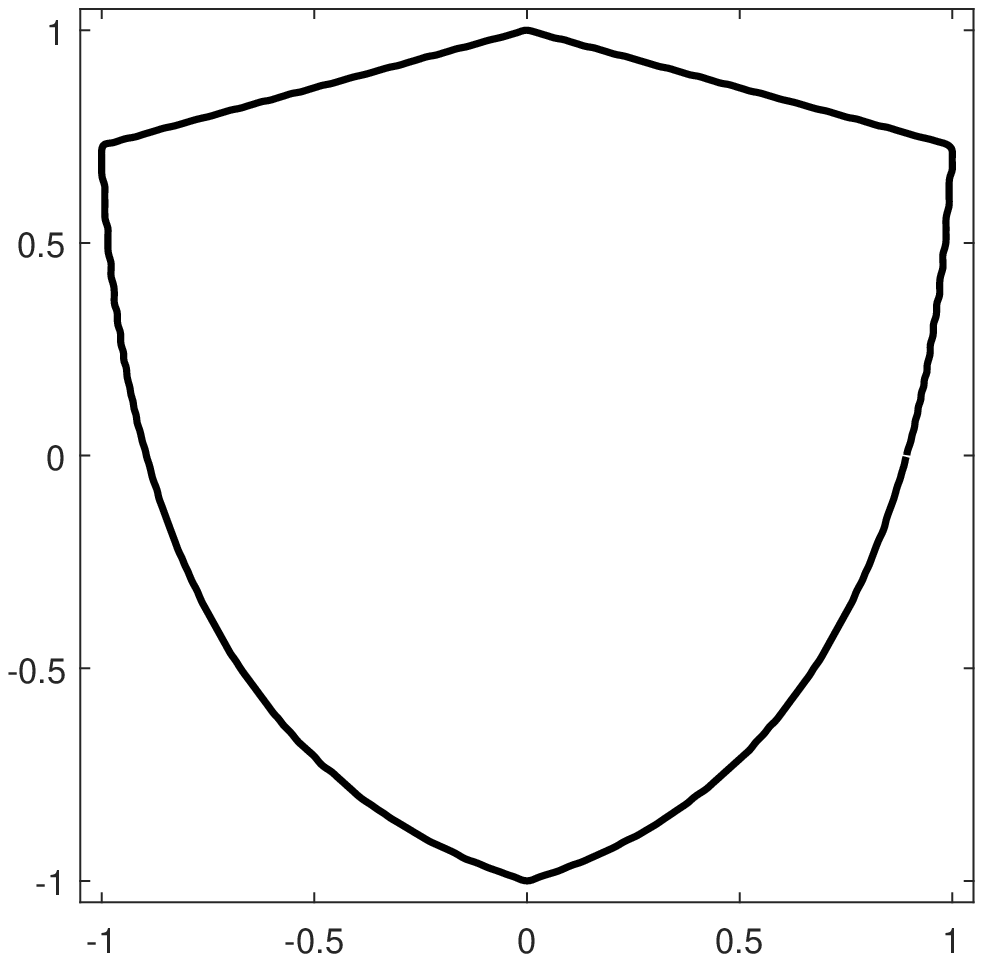}} &
   \raisebox{7ex - \height}{\includegraphics[ scale=0.23]{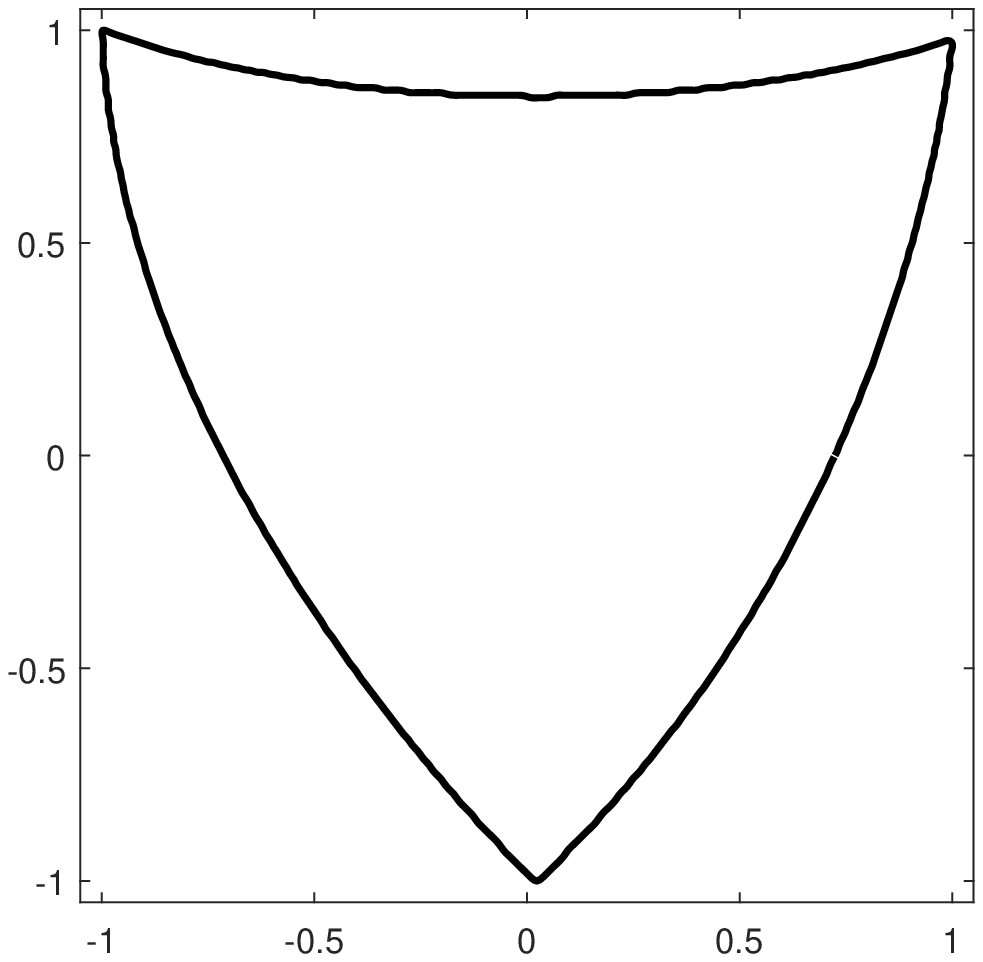}} \\
&&&&&\\
 b \quad &     \raisebox{7ex - \height}{\includegraphics[ scale=0.23]{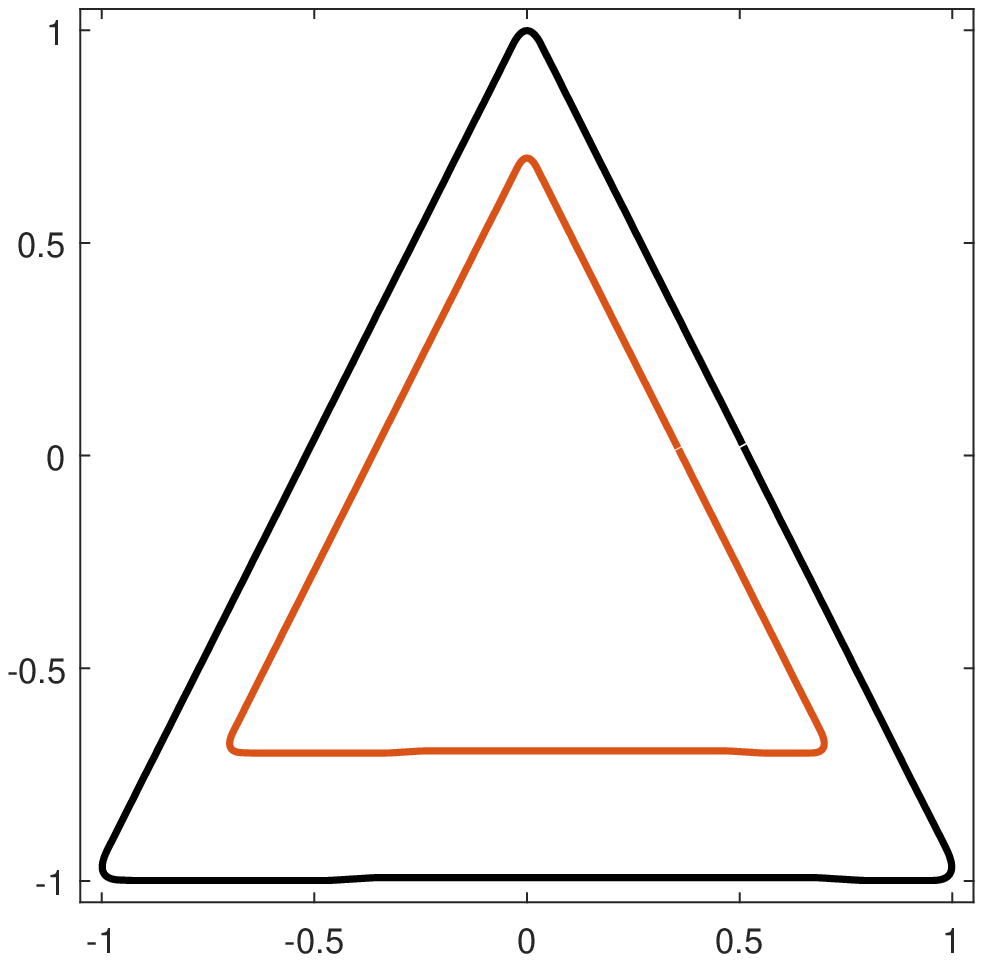}} &
   \raisebox{7ex - \height}{\includegraphics[scale=0.23]{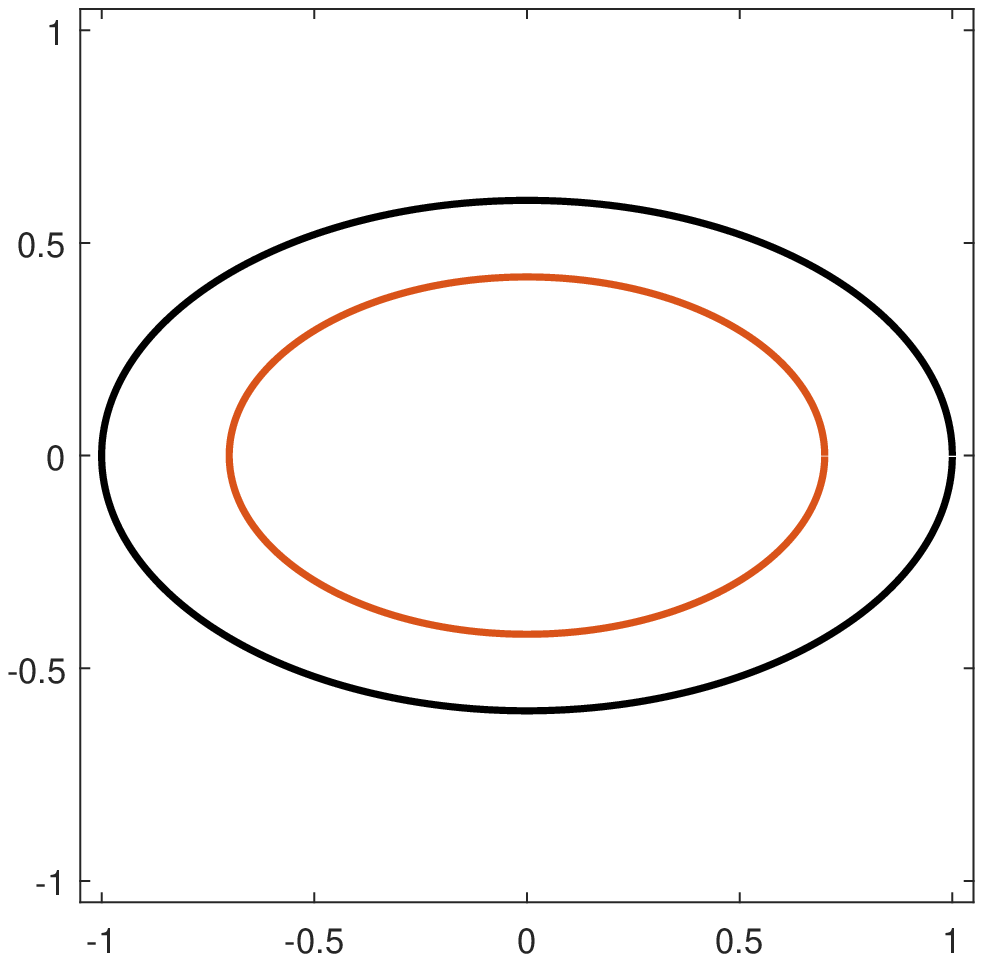}}  &
    \raisebox{7ex - \height}{\includegraphics[scale=0.23]{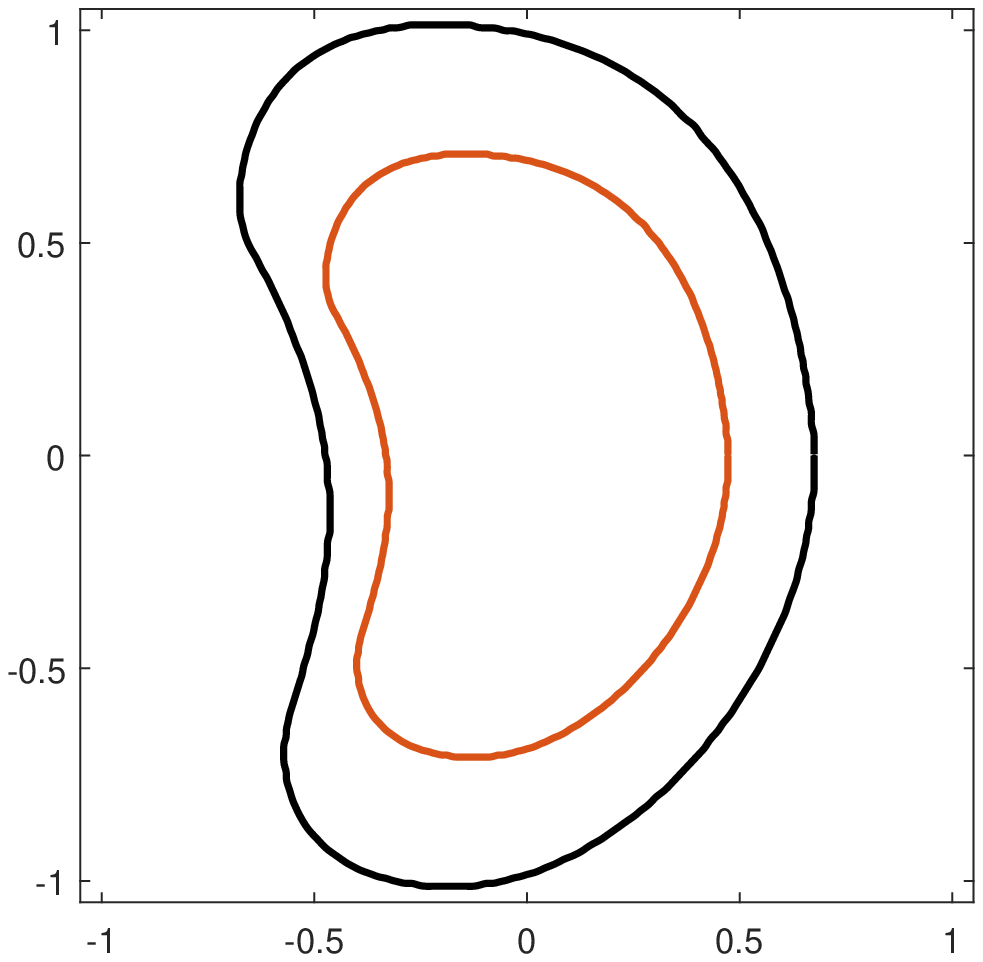}}  &
    \raisebox{7ex - \height}{\includegraphics[ scale=0.23]{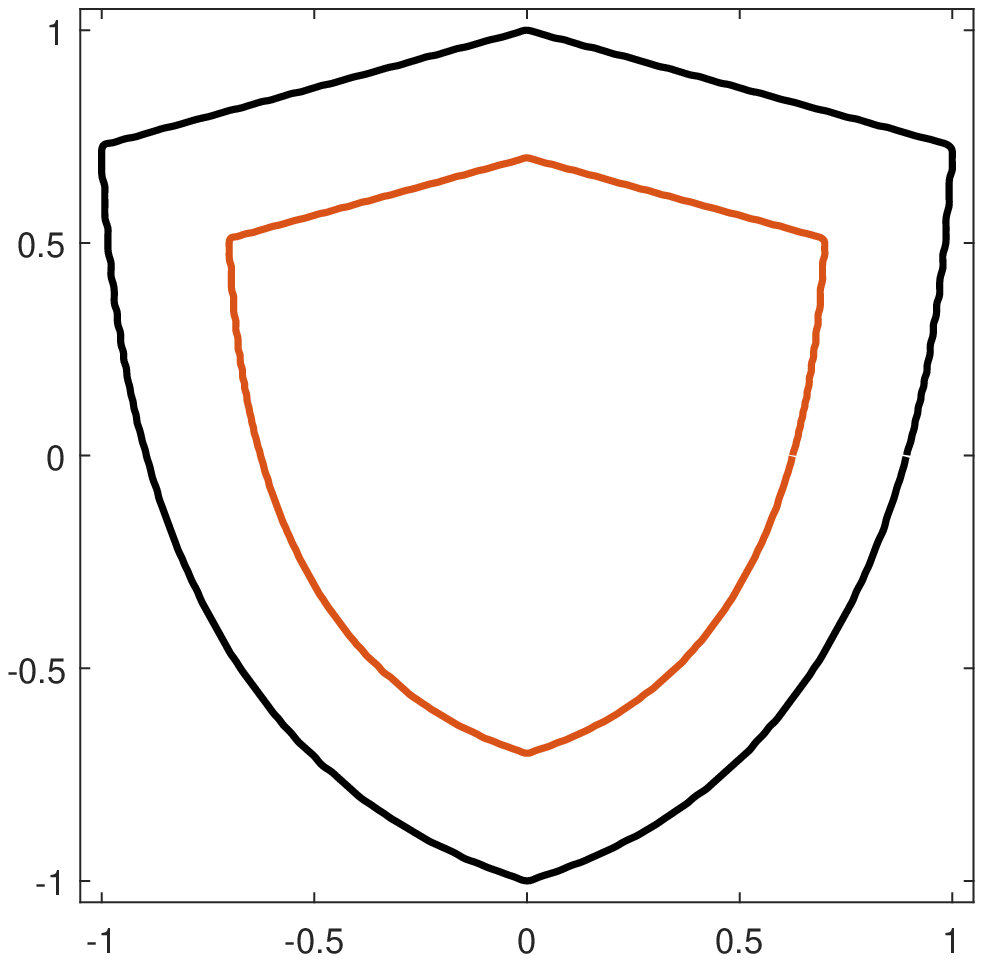}}  &  
 \raisebox{7ex - \height}{\includegraphics[ scale=0.23]{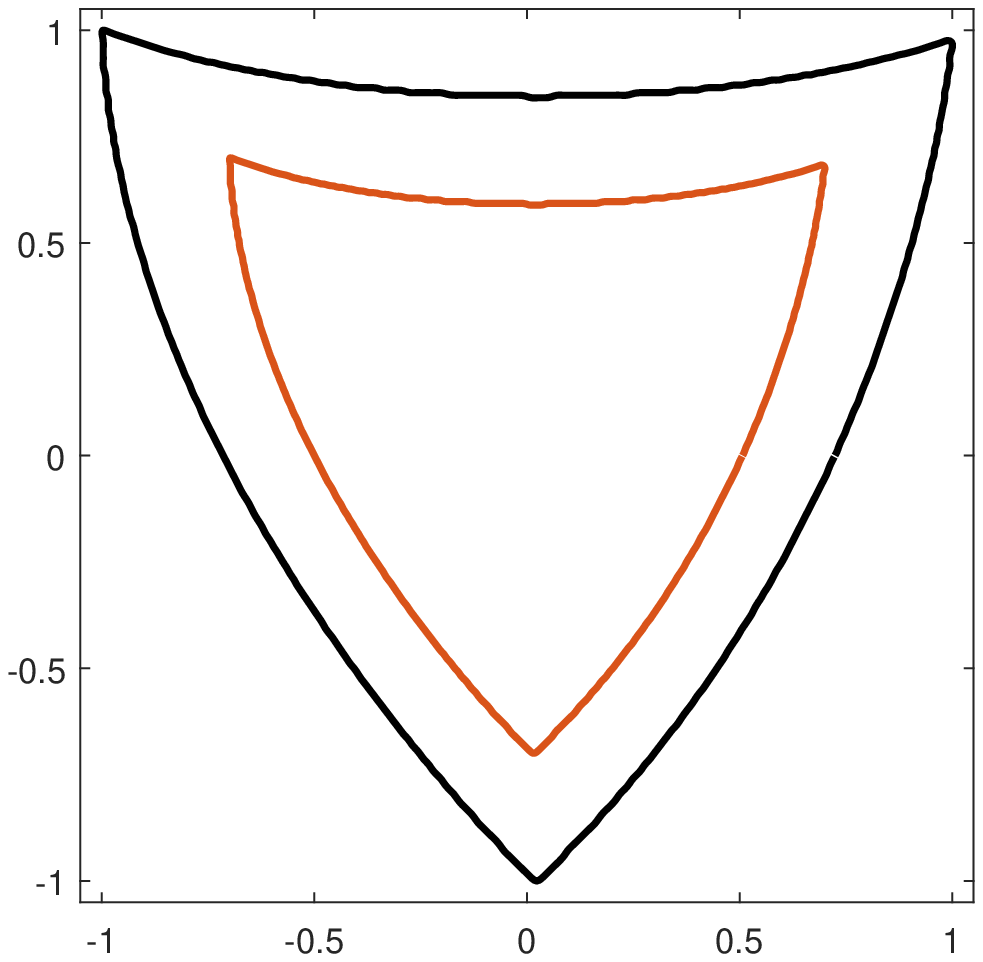}} \\
\end{tabular}
\caption{Dictionary $\mathcal{D}$.}
	\label{smalldico}
\end{figure}

 Our aim is to numerically simulate the mechanism of sensing a specific target $(\gamma , D)$ which is obtained starting from a standard element of the dictionary $(\sigma , B) \in \mathcal{D}$, applying a scaling by a parameter $s > 0$ and a rotation by an angle $\theta \in [0, 2 \pi)$. For doing so, we generalize the 
code developed in \cite{code} for homogeneous targets to piecewise inhomogeneous ones. 
\medskip

The targets we are considering for the experiments are located at the origin as the standard shapes. The scaling coefficient and the rotation angle are $s = 0.5$ and $\theta = \pi/3$, respectively. On the other hand, we consider the full-view setting. We assume that the fish is a banana-shaped fish that swims around the target along a circular trajectory whose curvature center  is located at the origin $(0,0)$ and the radius is $R = 1.5 \times \text{diameter}(D)$. We set the impedance of the skin $\xi = 0$. See for instance \figref{configuration}.

\subsection{Experiment} \label{subsec-experiment}
The experiment is as follows. As the fish swims around the target, a series of $512$ equispaced receptors on its skin collects the measurements for $500$ different positions, so that the resulting Multistatic-Response-Matrix (MSR) is a $500 \times 512$ matrix. From this acquisition procedure,  we reconstruct the CGPTs of the target up to a certain order $K$ and use a proper subset of them to compute approximately some distribution descriptors.
The descriptors obtained in this manner are then compared to the precomputed theoretical descriptors of the standard distributions of $\mathcal{D}$. We select the best matching conductivity as the standard conductivity that corresponds to the minimal error, in the noiseless case, or to the minimal mean error, when the measurements are corrupted by noise.

\begin{figure}[ht]
    \centering
        \includegraphics[height=2in]{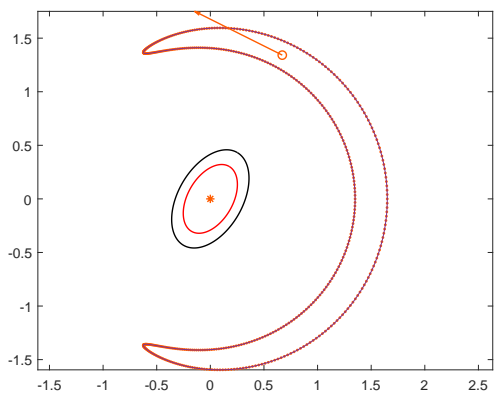}        

    \caption{Banana-shaped fish drawn at a fixed position, while swimming along a circular trajectory centered at the origin and collecting measurements for sensing the inhomogeneous target 2b.}
\label{configuration}
\end{figure}

\begin{figure}[h]
        \centering

 \begin{subfigure}[t]{0.4\textwidth}
        \centering
        \includegraphics[scale=0.5]{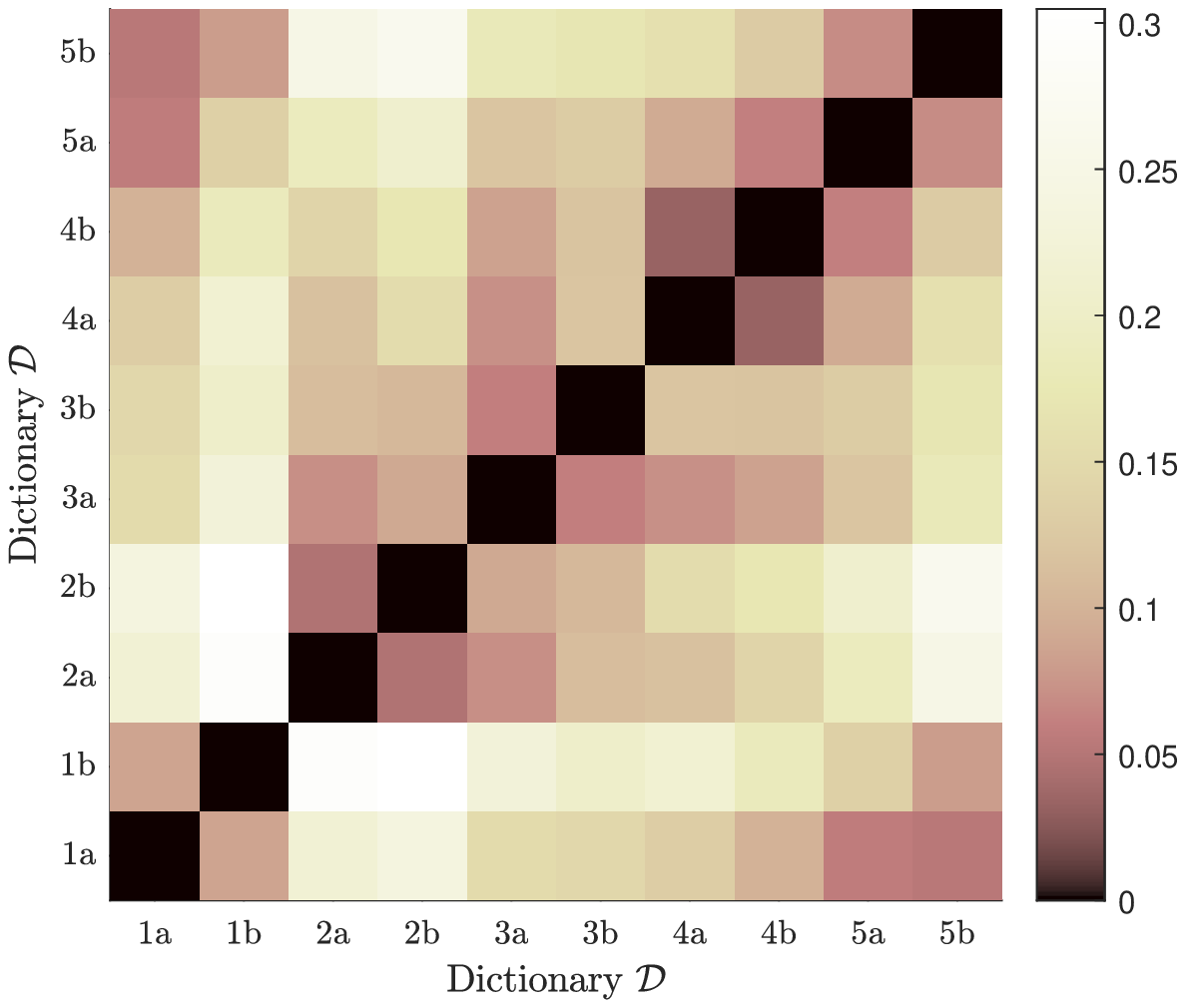}
        \caption{order $\le 2$}
\label{fig:ord2_theoret}
    \end{subfigure}
    ~ \hspace{0.05\textwidth}
 \begin{subfigure}[t]{0.4\textwidth}
        \centering
        \includegraphics[scale=0.5]{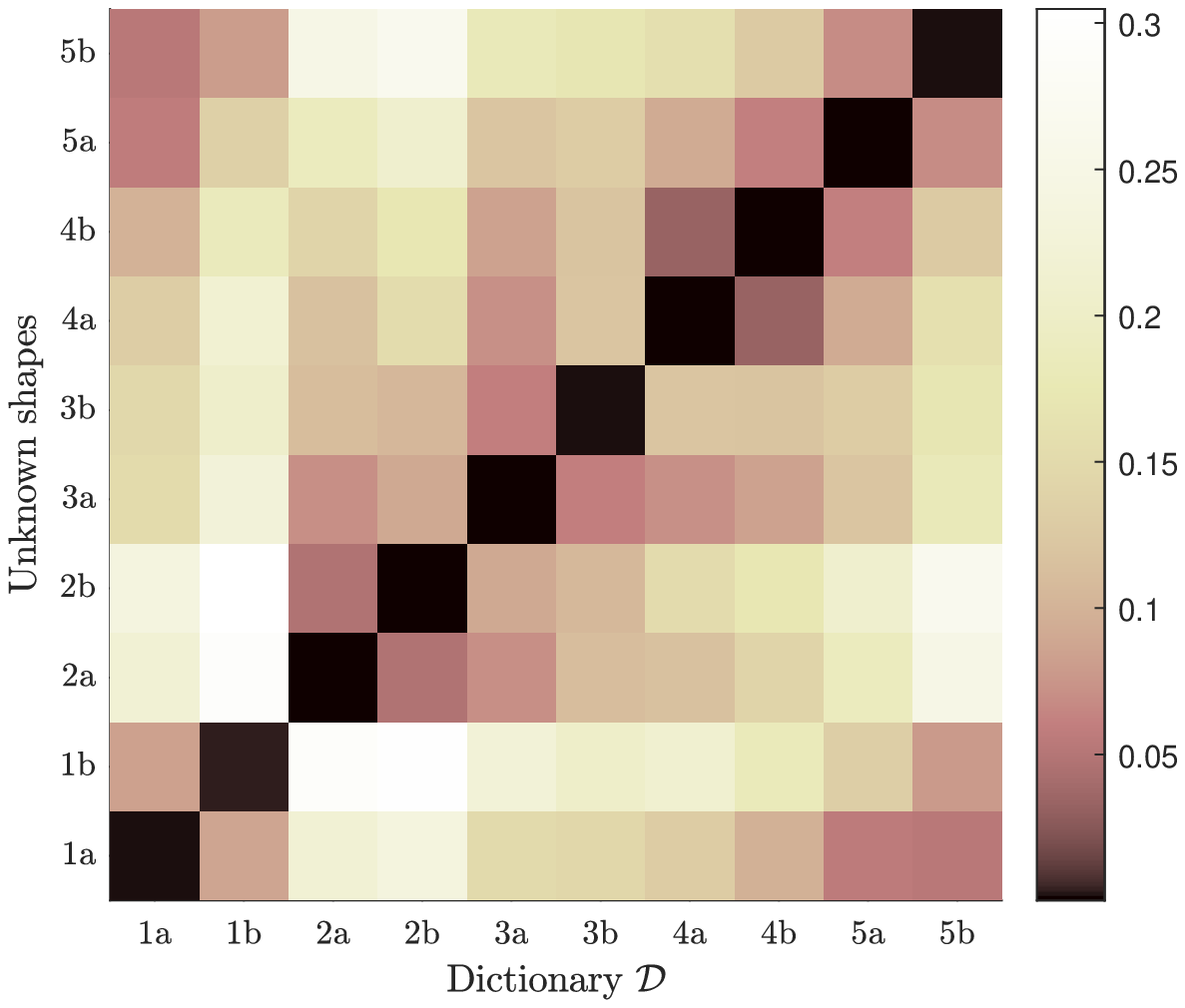}
        \caption{order $\le 2$}
\label{fig:ord2_recon}
    \end{subfigure}

 \begin{subfigure}[t]{0.4\textwidth}
        \centering
        \includegraphics[scale=0.5]{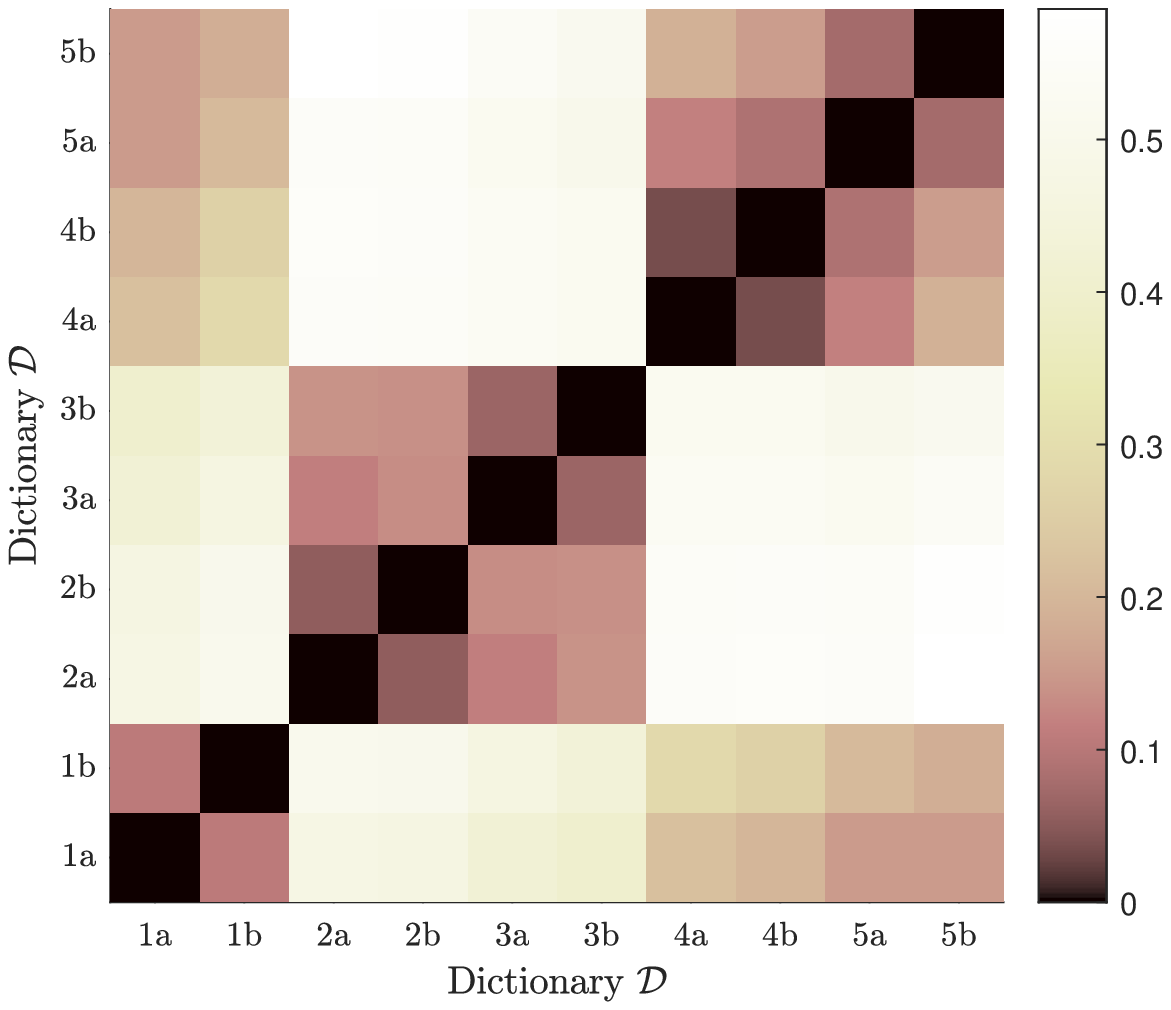}
        \caption{order $\le 3$}
\label{fig:ord3_theoret}
    \end{subfigure}
    ~ \hspace{0.05\textwidth}
 \begin{subfigure}[t]{0.4\textwidth}
        \centering
        \includegraphics[scale=0.5]{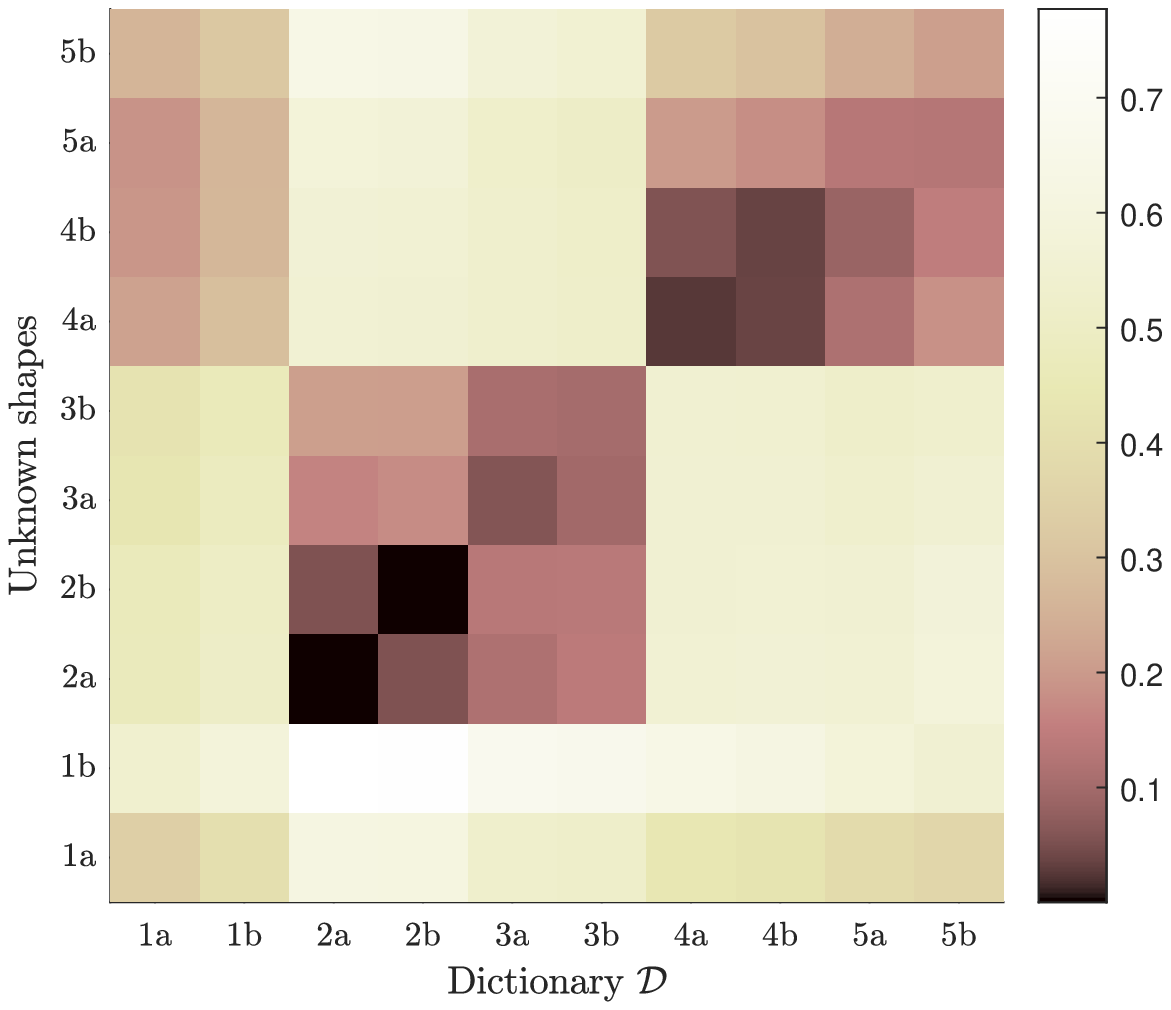}
        \caption{order $\le 3$}
\label{fig:ord3_recon}
    \end{subfigure}

\caption{\figref{fig:ord2_theoret} and \figref{fig:ord3_theoret} show the discrepancy between the theoretical  descriptors, wheras \figref{fig:ord2_recon} and \figref{fig:ord3_recon} show the discrepancy between the theoretical  descriptors and the ones obtained from the reconstructed CGPTs  at noise-level $\sigma_0 = 0$.}
	\label{small-dico-separability}
\end{figure}

We observe from \figref{small-dico-separability} that the conductivities of the dictionary $\mathcal{D}$ can be both theoretically and experimentally well-distinguished by means of their second-order descriptors $\left  ( \mathcal{I}^{(1)}_{mn} \right )_{m,n =1,2},\left  ( \mathcal{I}^{(2)}_{mn} \right )_{m,n =1,2}$.

For each noise-level, we repeat the same experiment $N = 10^4$ times and compute the probability of identification. The results are shown in \figref{prob_plot}. We report in Table \ref{frequency-table-reco} some additional data
concerning the identification that performs relatively badly, i.e., that of the target 1a.

\begin{figure}[h]
 \includegraphics[scale=0.6]{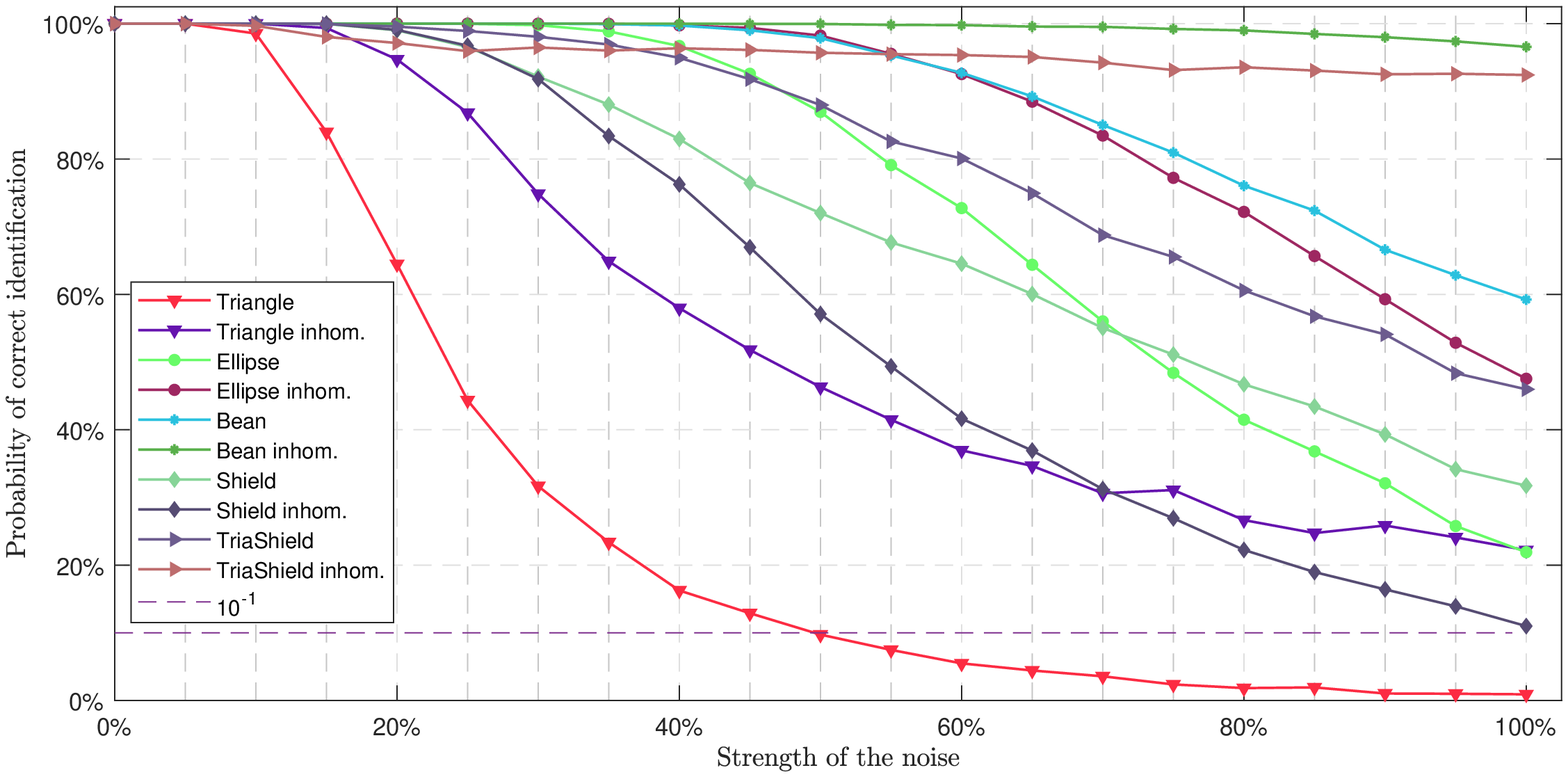} 
\caption{Stability of classification based on second-order descriptors. For each level of noise $N = 10^4$ experiments have been driven. The location of the target is supposed to be known.}
	\label{prob_plot}
\end{figure}

\begin{table}[h]
\centering
\begin{tabular}{l|cccccccccc}
&$\boxed{\text{1a}}$&1b&2a&2b&3a&3b&4a&4b&5a&5b\\
\hline
0.1&0.9854& \rule{7pt}{0pt}0& \rule{7pt}{0pt}0& \rule{7pt}{0pt}0& \rule{7pt}{0pt}0& \rule{7pt}{0pt}0& \rule{7pt}{0pt}0& \rule{7pt}{0pt}0& \rule{7pt}{0pt}0.0004& \rule{7pt}{0pt}0.0142\\ 
0.2&0.6448& \rule{7pt}{0pt}0& \rule{7pt}{0pt}0& \rule{7pt}{0pt}0& \rule{7pt}{0pt}0& \rule{7pt}{0pt}0& \rule{7pt}{0pt}0& \rule{7pt}{0pt}0& \rule{7pt}{0pt}0.0514& \rule{7pt}{0pt}0.3038\\ 
0.3&0.3168& \rule{7pt}{0pt}0& \rule{7pt}{0pt}0& \rule{7pt}{0pt}0& \rule{7pt}{0pt}0& \rule{7pt}{0pt}0& \rule{7pt}{0pt}0& \rule{7pt}{0pt}0& \rule{7pt}{0pt}0.1252& \rule{7pt}{0pt}0.558\\ 
0.4&0.1626& \rule{7pt}{0pt}0.0006& \rule{7pt}{0pt}0& \rule{7pt}{0pt}0& \rule{7pt}{0pt}0& \rule{7pt}{0pt}0& \rule{7pt}{0pt}0& \rule{7pt}{0pt}0& \rule{7pt}{0pt}0.1584& \rule{7pt}{0pt}0.6784\\ 
0.5&0.0974& \rule{7pt}{0pt}0.0012& \rule{7pt}{0pt}0& \rule{7pt}{0pt}0& \rule{7pt}{0pt}0& \rule{7pt}{0pt}0& \rule{7pt}{0pt}0& \rule{7pt}{0pt}0& \rule{7pt}{0pt}0.1712& \rule{7pt}{0pt}0.7302\\ 
\end{tabular}

\medskip

\caption{Frequency table for the identification of the conductivity 1a, i.e., the homogeneous Triangle, at different small noise-levels. Each row contains the relative frequencies for all the elements of the dictionary at a fixed noise-level.}
	\label{frequency-table-reco}
\end{table}
The results reveal that the mismatching happens more frequently between conductivities for which the corresponding geometric shapes share the same kind of high-frequency components, see \cite{Am8}. In particular, depending on the strength of the noise that is considered, the pairs of conductivities 1a-b,4a-b and 5a-b  are frequently confused with each other, due to the presence of corners and are rarely confused with the pairs 2a-b,3a-b.
This mismatching pattern is confirmed by \figref{fig:ord3_theoret}, where third-order descriptors qualitatively highlight such similarities.

\medskip

We also exhibit some plots showing the mean errors resulting from the identification procedure for two different conductivities, see \figref{fig:meanerror1a} and \figref{fig:meanerror2b}. In this case the experiment has been repeated for 5000 times, using independent draws of white noise, and the results are the mean values of all experiments.

\begin{figure}[h]
    \begin{subfigure}[t]{0.5\textwidth}
        \centering
        \includegraphics[scale=0.4]{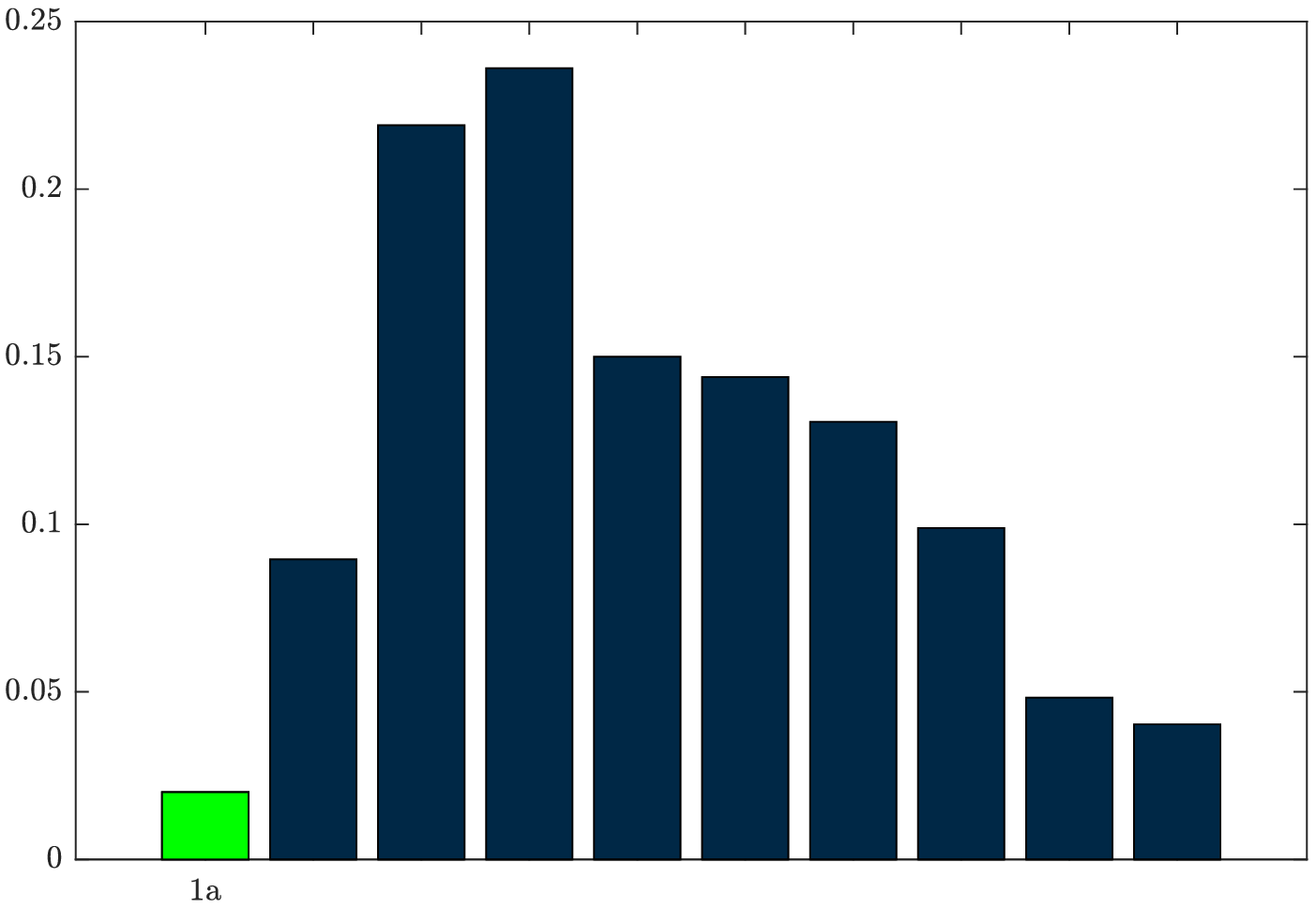}
        \caption{$\sigma_0 = 0.15$.}
    \end{subfigure}
    ~ \hspace{0.03\textwidth}
    \begin{subfigure}[t]{0.4\textwidth}
        \centering
        \includegraphics[scale=0.4]{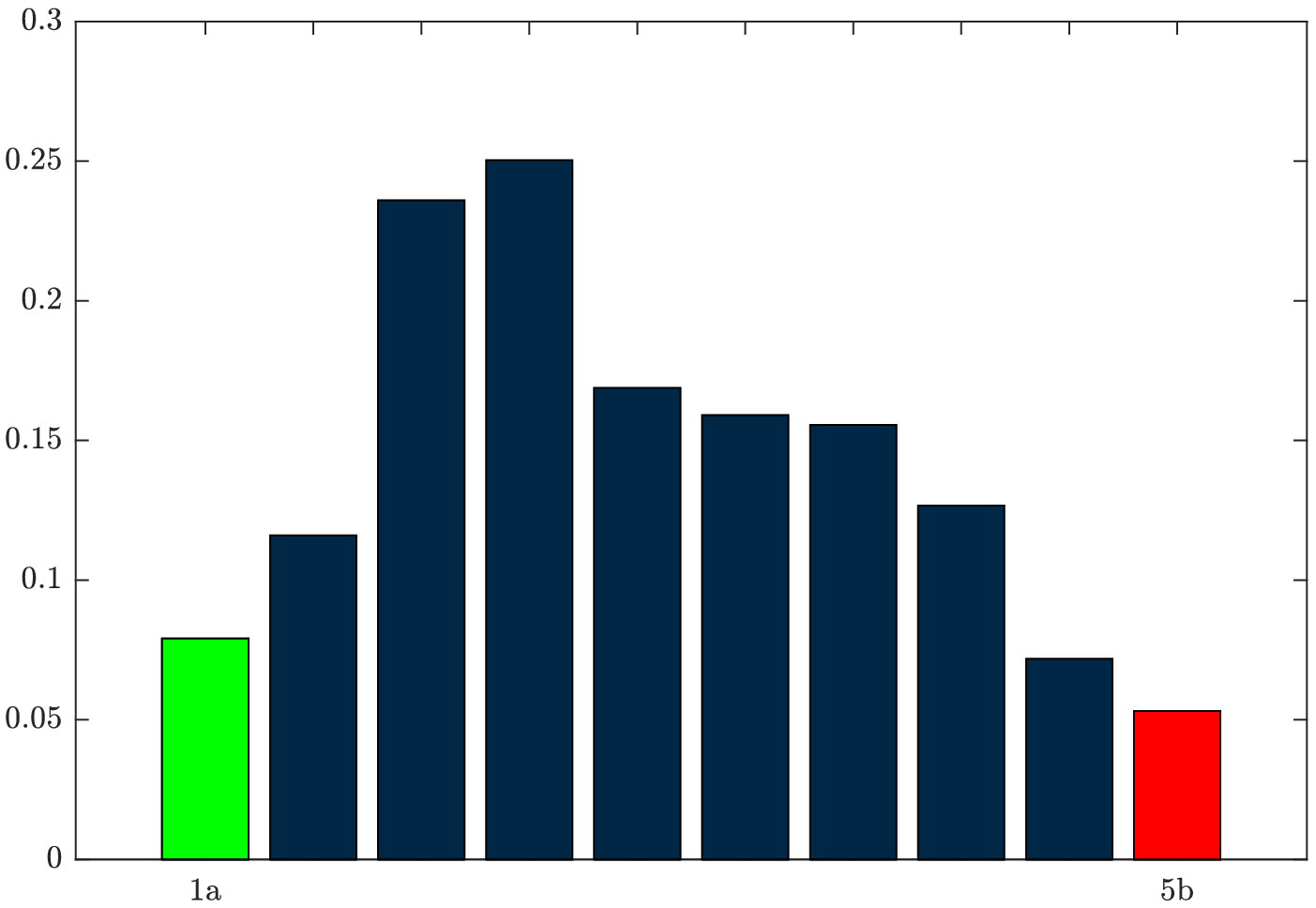}
        \caption{$\sigma_0 = 0.50$.}
    \end{subfigure}
\caption{Errors concerning the identification of the homogeneous Triangle 1a at different noise-levels. Each bar refers to a different element of $\mathcal{D}$.}
	\label{fig:meanerror1a}
\end{figure}
\begin{figure}[h]
    \begin{subfigure}[t]{0.5\textwidth}
        \centering
        \includegraphics[scale=0.4]{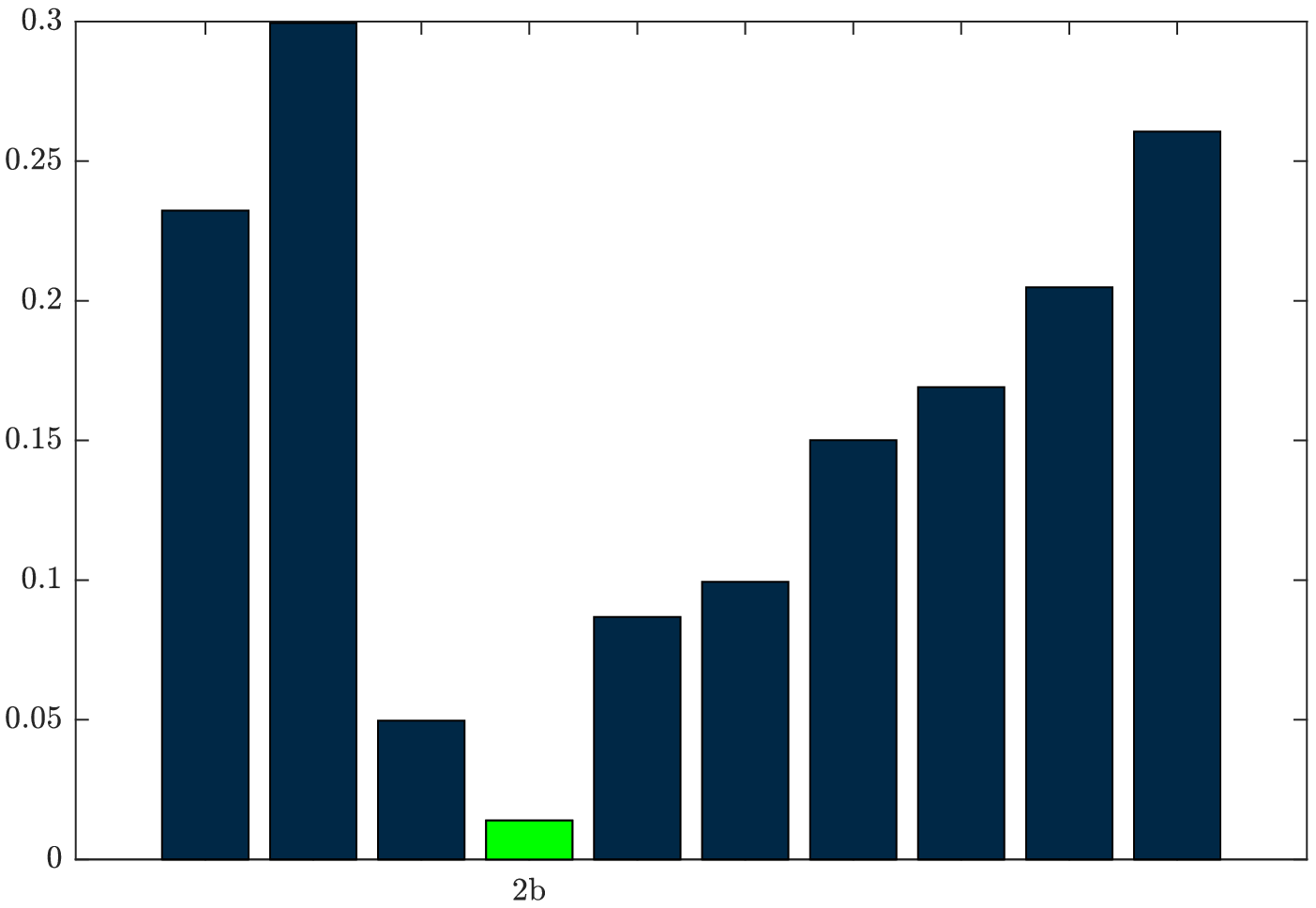}
        \caption{$\sigma_0 = 0.15$.}
    \end{subfigure}
    ~\hspace{0.03\textwidth}
    \begin{subfigure}[t]{0.4\textwidth}
        \centering
        \includegraphics[scale=0.4]{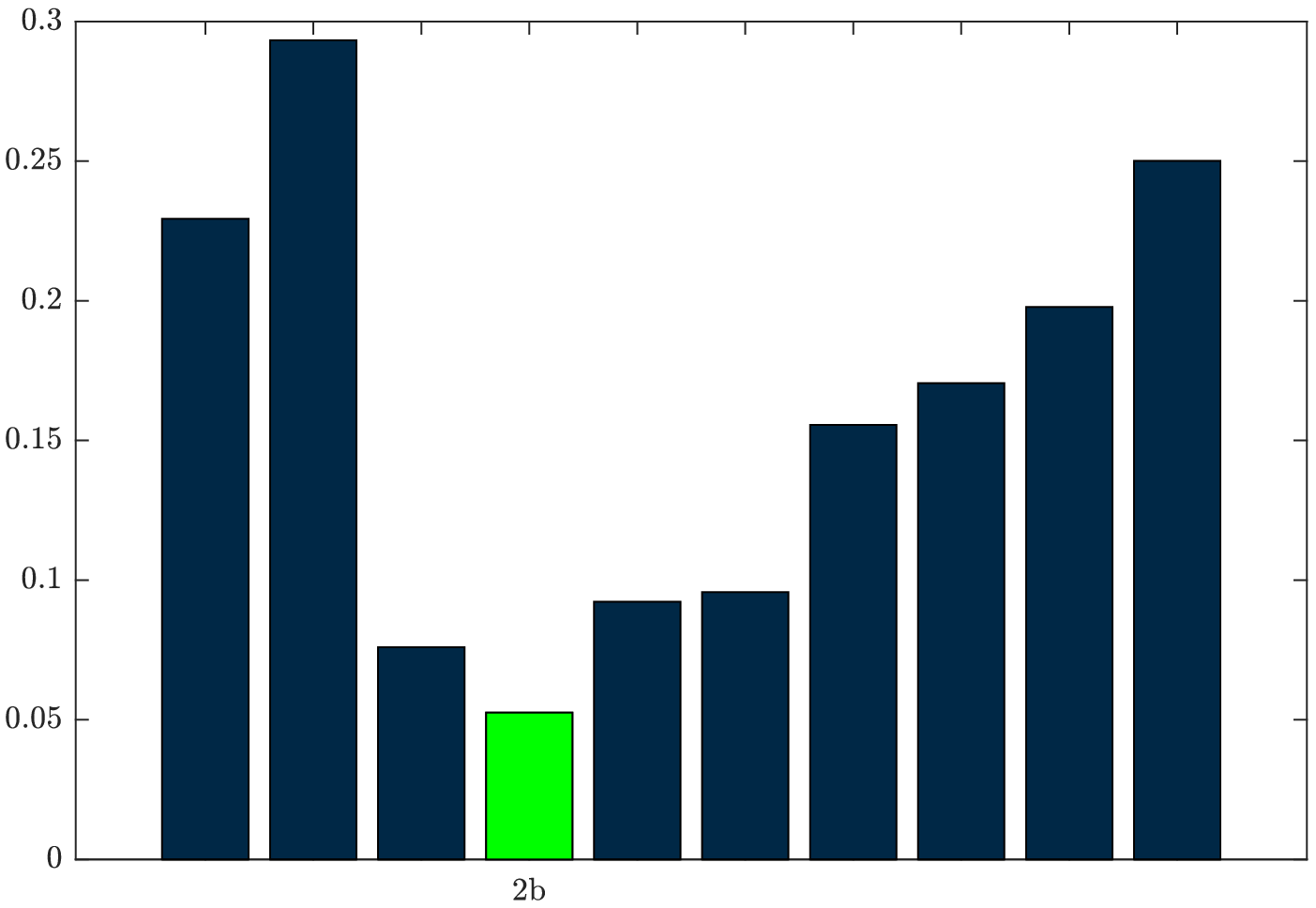}
        \caption{$\sigma_0 = 0.50$.}
    \end{subfigure}
\caption{Identification of the inhomogeneous Ellipse 2b at different noise-levels. Each bar refers to a different element of $\mathcal{D}$.}
	\label{fig:meanerror2b}
\end{figure}

\FloatBarrier
\subsection{Robustness of the reconstruction}

We numerically reconstructed the CGPTs from the measurements, i.e., from the MSR matrix. This reconstruction turns out to be robust when we add some noise to the simulated data. Fixing the truncation order in the reconstruction at $K = 5$, the relative error of the reconstructed CGPTs of orders $k$ for  $k \le 5$ is illustrated in Figure \ref{robustness}. For the noisy case, the experiment has been repeated $100$ times, using independent draws of white noise and the reconstructed CGPT is taken as the average of the CGPTs.

\begin{figure}[ht]
    \begin{subfigure}[t]{0.5\textwidth}
        \centering
        \includegraphics[scale=0.5]{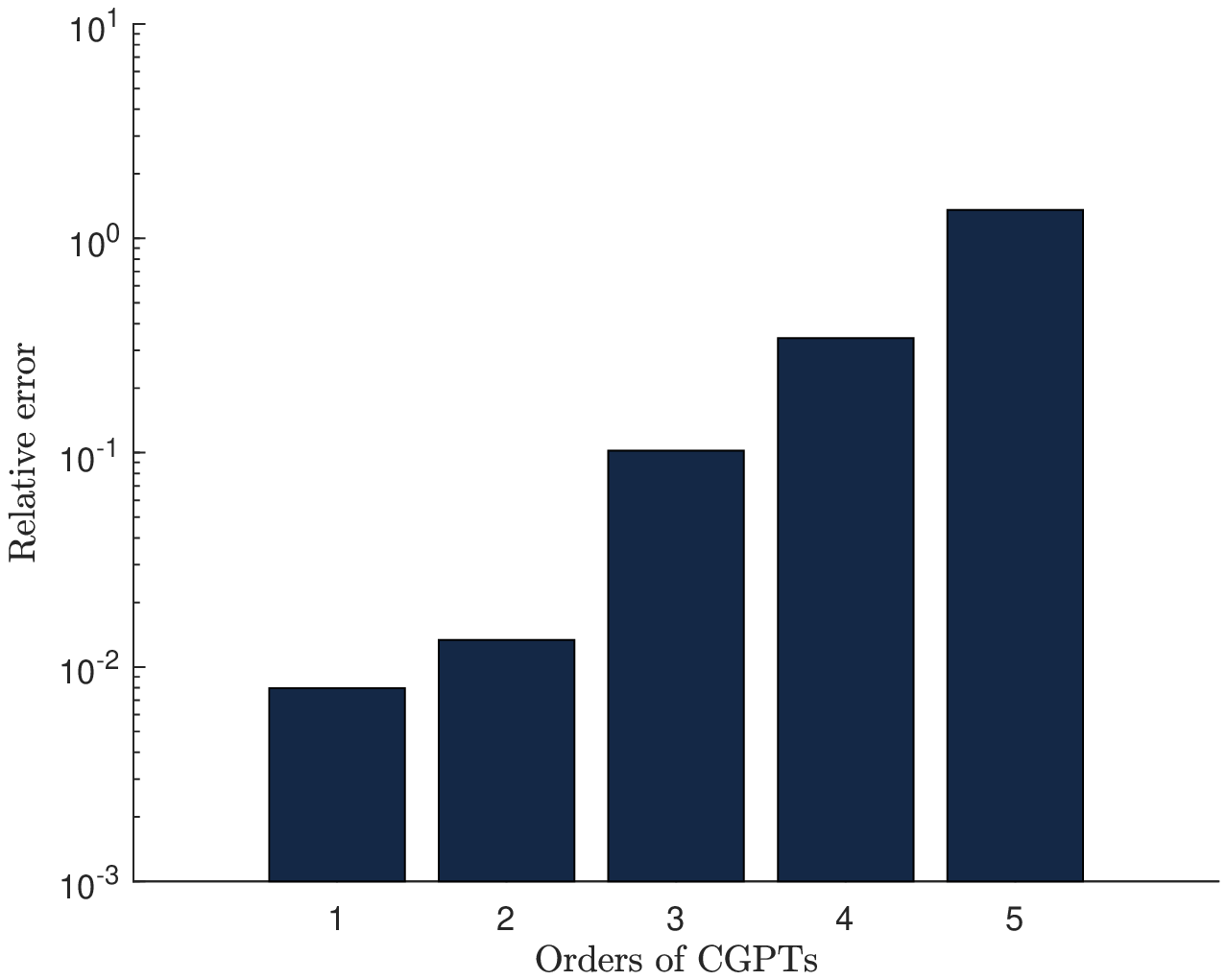}
        \caption{$\sigma_0 = 0$.}
    \end{subfigure}
    ~ 
    \begin{subfigure}[t]{0.5\textwidth}
        \centering
        \includegraphics[scale=0.5]{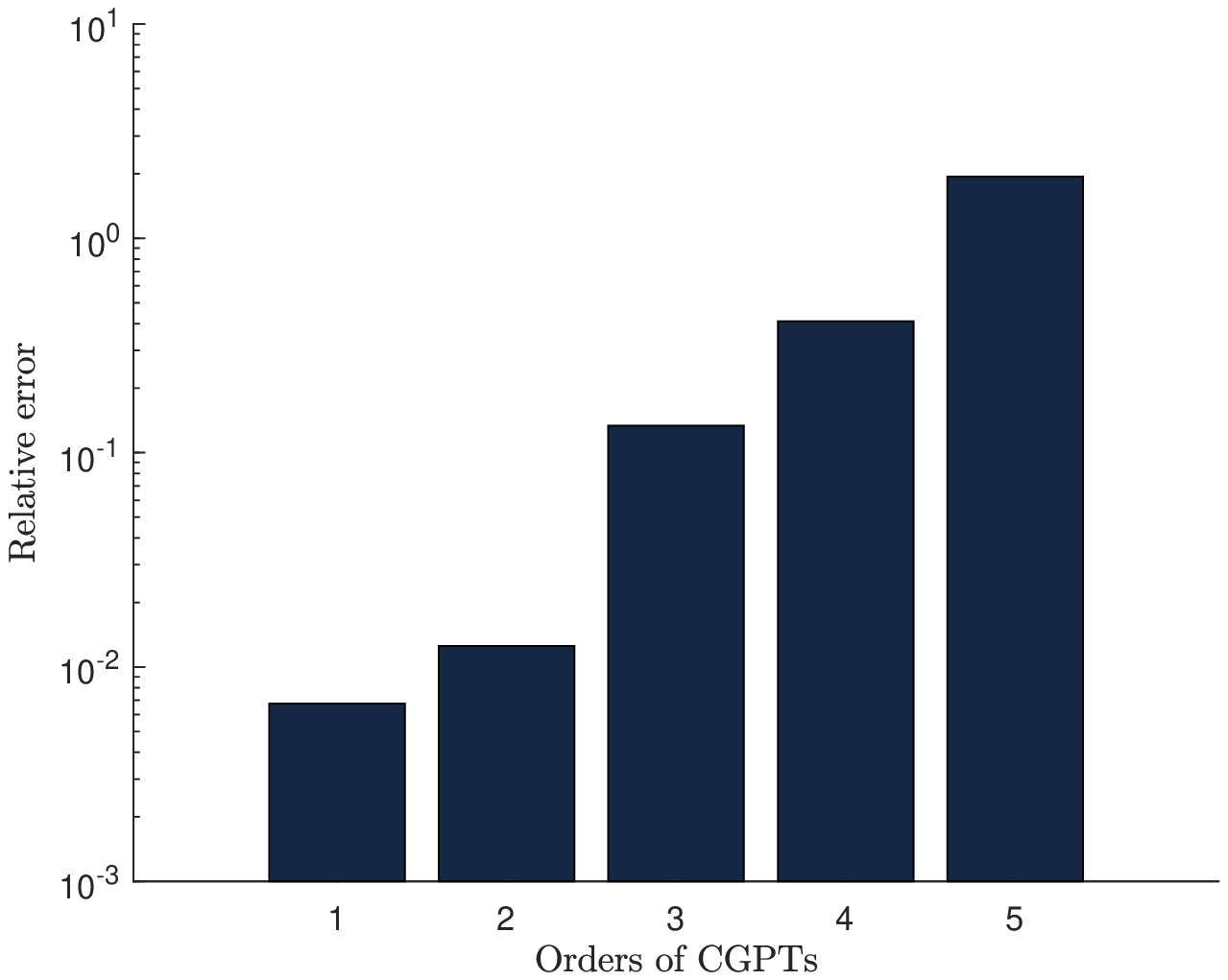}
        \caption{$\sigma_0 = 0.2$.}
    \end{subfigure}
\caption{Relative error $\| \mathbf{M} - \mathbf{M}_{\text{recon}} \|_F/\|\mathbf{M}\|_F$  of the reconstruction of the CGPTs throughout the acquisition procedure described previously in \ref{subsec-setting} for the conductivity 1b, the inhomogeneous Triangle.}
	\label{robustness}
\end{figure}

\FloatBarrier

\section{Concluding remarks} 
In this paper, we have extended the dictionary-matching approach for classification in electro-sensing to inhomogeneous targets. We have established translation, rotation, and scaling formulas for particular linear combinations of the generalized polarization tensors associated with inhomogeneous targets. We have derived new invariants and tested their performance for recognizing inhomogeneous targets inside a dictionary of homogeneous and inhomogeneous conductivity distributions. In a forthcoming paper, we plan to combine our present approach together with the multi-frequency approach introduced in \cite{Am5} to enhance the classification capabilities of the proposed method and its stability.

\section{Acknowledgment}
 The author gratefully acknowledges Prof. H. Ammari for his guidance and the financial support granted by the Swiss National Foundation (grant 200021-172483).


\begin{thebibliography}{9}
\bibitem{Am7} H. Ammari, \textsl{An Introduction to Mathematics of Emerging Biomedical Imaging}, Math. $\&$ Appl.  62. Springer, Berlin, 2008.
\bibitem{Am2} H. Ammari, T. Boulier, and J. Garnier, Modeling active electrolocation in weakly electric fish, SIAM J. Imaging Sci., 6 (2013), 285--321.
\bibitem{Am3} H. Ammari, T. Boulier, J. Garnier, W. Jing, H. Kang, and H. Wang, Target identification using dictionary matching of generalized polarization tensors, Found. Comput. Math., 14 (2014), 27--62. 
\bibitem{Am5} H. Ammari, T. Boulier, J. Garnier, and H. Wang, Shape recognition and classification in electro-sensing, Proc. Natl. Acad. Sci. USA, 111 (2014), 11652--11657.
\bibitem{Am6}  H. Ammari, T. Boulier, J. Garnier, and H. Wang, Mathematical modelling of the electric sense of fish: the Role of multi-frequency measurements and movement, Bioinspir. Biomim., 12  (2017),  025002.
\bibitem{Am1} H. Ammari, Y. Deng, H. Kang, and H. Lee, Reconstruction of inhomogeneous conductivities via the concept of generalized polarization tensors, Annales de l'I.H.P. Analyse non lin\'eaire, 31 (2014), 877--897. 

\bibitem{Am8} H. Ammari, J. Garnier, H. Kang, M. Lim, and S. Yu, Generalized polarization
tensors for shape description, submitted, (2011). 2, 11, 20, 22

\bibitem{jmpa}  H. Ammari, J. Garnier, L. Giovangigli, W. Jing, and J.-K. Seo, Spectroscopic imaging of a dilute cell suspension, J. Math. Pures Appl., 105(2016), 603--661.

\bibitem{pt1} H. Ammari and H. Kang, High-order terms in the asymptotic expansions of the steady-state voltage potentials in the presence of conductivity inhomogeneities of small diameter, SIAM J. Math. Anal., 34 (2003), 1152--1166. 

\bibitem{pt2} H. Ammari and H. Kang,  Properties of the generalized polarization tensors, Multiscale Model. Simul., 1 (2003),  335--348.


\bibitem{Am4} H. Ammari and H. Kang, \textsl{Polarization and Moment Tensors: With Applications to Inverse Problems and Effective Medium Theory}, Applied Mathematical Sciences, 162. Springer, New York, 2007.


\bibitem{p5} C. Assad, Electric field maps and boundary element simulations of electrolocation in
weakly electric fish, PhD thesis, 1997 (California Institute of Technology, Pasadena, CA).

\bibitem{p6} D. Babineau, A. Longtin, and J.E. Lewis, Modeling the electric field of weakly electric
fish, J. Exp. Biol.,  209 (2006), 3636--3651.


\bibitem{maciver} Y. Bai, I.D. Neveln, M. Peshkin, and M.A. MacIver, 
Enhanced detection performance in electrosense through capacitive sensing, Bioinspir. Biomim., 11 (2016), 055001. 


\bibitem{faouzi} E. Bonnetier, F. Triki, and C.-H. Tsou, On the electro-sensing of weakly electric fish, J. Math. Anal. Appl., 464 (2018),  280--303. 
\bibitem{p7} R. Budelli and  A.A. Caputi, The electric image in weakly electric fish: Perception of
objects of complex impedance, J. Exp. Biol., 203 (2000), 481--492.

\bibitem{caputi} Angel Ariel Caputi, The bioinspiring potential of weakly electric fish, 
Bioinspir. Biomim., 12 (2017), 025004. 
\bibitem{p8}  L. Chen, J.L. House, R. Krahe, and M.E. Nelson, Modeling signal and background components
of electrosensory scenes, J. Comp. Physiol. A Neuroethol Sens Neural Behav. Physiol.,  191 (2005),
331--345.
\bibitem{p15} O.M. Curet, N.A. Patankar, G.V. Lauder, and M.A. Maciver, Aquatic manoeuvering with
counter-propagating waves: A novel locomotive strategy, J. R. Soc. Interface 8 (2011), 1041--1050.
\bibitem{helli} W. Helligenberg, Theoretical and experimental approaches to spatial aspects of electrolocation, J. Comp. Physiol. A, 103 (1975), 247--272.
 
 
 \bibitem{hoshi} N. Hoshimiya, K. Shogen, T. Matsuo, and S. Chichibu, The Apteronotus EOD field: Waveform and EOD field simulation, J. Comp. Physiol. A., 135 (1980), 283--290. 
\bibitem{p1} H.W. Lissmann and K.E. Machin, The mechanism of object location in gymnarchus
niloticus and similar fish, J. Exp. Biol., 35 (1958), 451--486.

\bibitem{p9} M.A. Maciver, The computational neuroethology of weakly electric fish: Body
modeling, motion analysis, and sensory signal estimation, PhD thesis, 2001 (University of
Illinois at Urbana-Champaign, Champaign, IL).

\bibitem{p10} M.A. MacIver, N.M. Sharabash, and M.E. Nelson, Prey-capture behavior in gymnotid electric
fish: Motion analysis and effects of water conductivity, J. Exp. Biol., 204 (2001), 543--557.


\bibitem{p3}  P. Moller, Electric Fish: History and Behavior, 1995 (Chapman and Hall, London).

\bibitem{p4} M.E. Nelson, Target Detection, Image Analysis, and Modeling, 2005 (Springer-Verlag,
New York).
\bibitem{p11} B. Rasnow, C. Assad, M.E. Nelson, and J.M. Bower, Simulation and measurement of the
electric fields generated by weakly electric fish, Advances in Neural Information Processing
Systems 1, ed Touretzky DS (Morgan Kaufmann Publishers, San Mateo, CA), pp. 436–443, 1989.

\bibitem{p12} G. von der Emde, S. Schwarz, L. Gomez, R. Budelli, and K. Grant, Electric fish measure
distance in the dark, Science, 260 (1993), 1617--1623.

\bibitem{p13} G. von der Emde G, and S. Fetz, Distance, shape and more: Recognition of object features
during active electrolocation in a weakly electric fish, J. Exp. Biol. 210 (2007), 3082--3095.

\bibitem{p14} G. von der Emde, Active electrolocation of objects in weakly electric fish, J. Exp.
Biol., 202 (1999), 1205--1215.

\bibitem{code} H. Wang, Shape identification in electro-sensing, https://github.com/yanncalec/SIES.

\end{thebibliography}
\end{document}